\renewcommand*{\backref}[1]{}
\renewcommand*{\backrefalt}[4]{%
  \ifcase #1%
  \or [Page~#2.]%
  \else [Pages~#2.]%
  \fi%
}
\theoremstyle{plain}
\newtheorem{lemma}{Lemma}
\newtheorem{proposition}[lemma]{Proposition}
\newtheorem{theorem}[lemma]{Theorem}
\newtheorem{corollary}[lemma]{Corollary}
\newtheorem*{thma}{Theorem~A}
\newtheorem*{thmb}{Theorem~B}
\newtheorem*{thmc}{Theorem~C}
\theoremstyle{definition}
\newtheorem{definition}[lemma]{Definition}
\newtheorem*{remark}{Remark}
\newcommand{\fg}{\mathfrak{g}}
\newcommand{\g}{\mathfrak{g}}
\renewcommand{\k}{\mathfrak{k}}
\newcommand{\fk}{\mathfrak{k}}
\newcommand{\RR}{\mathbb{R}}
\newcommand{\ZZ}{\mathbb{Z}}
\renewcommand{\C}{\mathsf{C}}
\newcommand{\C}{\mathsf{C}}
\newcommand{\eL}{\mathscr{L}}
\newcommand{\eX}{\mathscr{X}}
\newcommand{\bigO}{\mathcal{O}}
\newcommand{\GL}{\operatorname{GL}}
\newcommand{\Ad}{\operatorname{Ad}}
\newcommand{\ad}{\operatorname{ad}}
\newcommand{\Lie}{\operatorname{Lie}}
\newcommand{\Aff}{\operatorname{Aff}}
\newcommand{\Wedge}{\Lambda}
\newcommand{\pair}[2]{\left\langle #1,#2\right\rangle} 
\DeclareMathOperator{\End}{End}
\DeclareMathOperator{\Hom}{Hom}
\DeclareMathOperator{\pr}{pr}
\newcommand{\MUNCH}[1]{\relax}
\numberwithin{equation}{section}
\begin{document}

\title{Symplectic actions and central extensions}
\author[Beckett]{Andrew Beckett}
\author[Figueroa-O'Farrill]{José Figueroa-O'Farrill}
\address{Maxwell Institute and School of Mathematics, The University
  of Edinburgh, James Clerk Maxwell Building, Peter Guthrie Tait Road,
  Edinburgh EH9 3FD, Scotland, United Kingdom}
\email[AB]{\href{mailto:abeckett@ed.ac.uk}{abeckett[at]ed.ac.uk},
  ORCID: \href{https://orcid.org/0000-0002-7287-3156}{0000-0002-7287-3156}}
\email[JMF]{\href{mailto:j.m.figueroa@ed.ac.uk}{j.m.figueroa[at]ed.ac.uk},
  ORCID: \href{https://orcid.org/0000-0002-9308-9360}{0000-0002-9308-9360}}
\begin{abstract}
  We give a proof of the fact that a simply-connected symplectic
  homogeneous space \((M,\omega)\) of a connected Lie group \(G\) is
  the universal cover of a coadjoint orbit of a one-dimensional
  central extension of \(G\).  We emphasise the rôle of symplectic
  group cocycles and the relationship between such cocycles,
  left-invariant presymplectic structures on \(G\) and central
  extensions of \(G\); in particular, we show that integrability of 
  a central extension of \(\fg\) to a central extension of \(G\) is 
  equivalent to integrability of a representative Chevalley-Eilenberg
  2-cocycle of \(\fg\) to a symplectic cocycle of \(G\).
\end{abstract}
\thanks{EMPG-22-04}
\maketitle
\tableofcontents

\section{Introduction}
\label{sec:intro}

In the study of dynamical systems invariant under the action of a Lie
group $G$, one encounters the notion of an elementary system with
symmetry $G$: namely, a symplectic manifold admitting a transitive
action of $G$ via symplectomorphisms \cite[§14]{MR0260238} (or in
the English translation \cite{MR1461545}). Such spaces are also known
as homogeneous symplectic $G$-spaces. These are the classical
counterparts of free elementary particles with symmetry $G$, which are
given by unitary irreducible representations of $G$.  Such unitary
irreducible representations often result from quantising elementary
systems geometrically.

The paradigmatic examples of such elementary systems are the coadjoint
orbits of $G$.  Let $\g$ be the Lie algebra of $G$ and $\g^*$ its
dual vector space. We shall let $\Ad : G \to \GL(\g)$ denote the
adjoint representation and $\ad: \g \to \End(\g)$ its linearisation
at the identity.  The coadjoint representation of $G$, denoted $\Ad^* : G
\to \GL(\g^*)$, is defined by $\Ad^*_g \alpha = \alpha \circ
\Ad_{g^{-1}}$ for all $\alpha \in \g^*$ and $g \in G$.  Its linearisation at the
identity is the coadjoint representation of the Lie algebra, denoted
by $\ad^* : \g \to \End(\g^*)$  and defined by $\ad^*_X
\alpha = - \alpha \circ \ad_X$ for all $\alpha \in \g^*$ and $X \in
\g$.

Now fix $\alpha \in \g^*$ and define $B_\alpha \in \wedge^2\g^*$
by\footnote{Here and in the sequel we often use the dual pairing
  notation $\left<-,-\right> : \g^* \times \g \to \RR$ instead of
  viewing $\g^*$ as $\Hom(\g,\RR)$.}
$B_\alpha(X,Y) = \left<\alpha,[X,Y]\right>$ for all $X,Y \in \g$.  The
radical of the bilinear form $B_\alpha$ consists of all $X \in \g$
such that $B_\alpha(X,Y) = 0$ for all $Y \in \g$.  This condition is
equivalent to $\ad^*_X\alpha = 0$, so that $X \in \g_\alpha$, the Lie
algebra of the stabiliser
$G_\alpha := \left\{ g \in G~ \middle | ~ \Ad^*_g\alpha = \alpha\right\}$
of $\alpha$. It follows that $B_\alpha$ induces a symplectic inner
product on $\g/\g_\alpha$. Introducing the coadjoint orbit
$\bigO_\alpha := \left\{ \Ad^*_g \alpha ~ \middle | ~ g \in G\right\}$ of
$\alpha$, we have an exact sequence of $G_\alpha$-modules
\begin{equation}\label{eq:KKS-sequence}
  \begin{tikzcd}
    0 \arrow[r] & \g_\alpha \arrow[r] & \g \arrow[r] &
    T_\alpha\bigO_\alpha \arrow[r] & 0,
  \end{tikzcd}
\end{equation}
where $G_\alpha$ acts on $T_\alpha \bigO_\alpha$ via the linear isotropy
representation. This shows that $\g/\g_\alpha \cong
T_\alpha\bigO_\alpha$ and we let $\omega_\alpha \in \wedge^2T^*_\alpha\bigO_\alpha$
be the symplectic inner product on $T_\alpha\bigO_\alpha$ induced from
$B_\alpha$.  Since $\omega_\alpha$ is $G_\alpha$-invariant, the
holonomy principle guarantees that 
$\omega_\alpha$ is the value at $\alpha$ of a $G$-invariant symplectic
form $\omega \in \Omega^2(\bigO_\alpha)$.  This construction is due
independently to Kirillov, Kostant and Souriau and we shall refer to
the symplectic structure on $\bigO_\alpha$ as $\omega_{\mathrm{KKS}}$.

It is not just that coadjoint orbits provide examples of homogeneous
symplectic manifolds, but that in a way to be made precise below, any
homogeneous symplectic manifold is locally isomorphic to a coadjoint
orbit.

Indeed, let $(M,\omega)$ be a symplectic manifold admitting a
transitive left action of a connected Lie group $G$ via
symplectomorphisms.  Let $X \in \g$ and let $\xi_X \in \eX(M)$ denote
the corresponding fundamental vector field.  This defines a Lie
algebra anti-homomorphism $\xi: \g \to \eX(M)$.  Since the symplectic
form $\omega$ is $G$-invariant, the fundamental vector fields are
symplectic: $\eL_{\xi_X}\omega = 0$ and by the Cartan formula, the
one-forms $\imath_{\xi_X}\omega$ are closed.  If $M$ is
simply-connected (although this is typically too strong), these forms
are exact, so that $\imath_{\xi_X}\omega = d\varphi_X$ and we may
always arrange for $\varphi_X$ to be the image of $X \in \g$ under a
linear map $\varphi: \g \to C^\infty(M)$.  Dual to this map we have
the moment map $\mu : M \to \g^*$ defined by
$\left<\mu(p),X\right> = \varphi_X(p)$.  For the case of a coadjoint
orbit $\bigO_\alpha$ of $G$, the moment map relative to the
Kirillov--Kostant--Souriau symplectic structure is simply the
inclusion $i: \bigO_\alpha \to \g^*$.

The moment map relates two spaces on which $G$ acts: the symplectic
manifold $M$ and the dual of the Lie algebra, on which $G$ acts via
the coadjoint representation. A natural question is whether this map
is equivariant. Equivariance of the moment map is obstructed and, as
shown by Atiyah and Bott \cite[Section~6]{MR721448}, the obstructions
are captured by the $G$-equivariant cohomology of $M$: namely, the
moment map is equivariant if and only if the symplectic form admits an
equivariant closed extension.

For $G$ connected, equivariance of the moment map is equivalent to the
comoment map $\varphi : \g \to C^\infty(M)$ being a Lie algebra
homomorphism: $\left\{ \varphi_X,\varphi_Y \right\} = \varphi_{[X,Y]}$
for all $X,Y \in \g$, where $\{-,-\}$ is the Poisson bracket on
$C^\infty(M)$. If this is the case, the image of $M$ under the moment
map is a coadjoint orbit $\bigO$ of $G$.  The triple $(M,\omega,\mu)$
defines an object in the category of hamiltonian $G$-spaces (see
\cite[§5]{MR0294568}), where a morphism between two objects
$(M,\omega,\mu)$ and $(M',\omega',\mu')$ is a smooth map
$\phi: M \to M'$ such that $\phi^*\omega'=\omega$ and
$\phi^*\mu' = \mu$. Kostant \cite[Proposition 5.1.1]{MR0294568} proves
that $\phi$ is a $G$-equivariant covering. Since an equivariant moment
map $\mu : M \to \bigO$ obeys $\mu^*\omega_{\mathrm{KKS}} = \omega$,
it defines a morphism between the hamiltonian $G$-spaces
$(M,\omega,\mu)$ and $(\bigO,\omega_{\mathrm{KKS}},i)$, where
$i : \bigO \to \g^*$ is the inclusion. It follows that
$\mu : M \to \bigO$ is a covering. If $M$ is simply-connected, it is
(symplectically as well as topologically) the universal cover of a
coadjoint orbit of $G$.

What about if $\mu$ is not equivariant? Souriau\cite[§11]{MR0260238}
showed that $\mu$ is \emph{always} equivariant relative to an
affinisation of the coadjoint representation and hence if $M$ is
simply-connected, it is the universal cover of an affine orbit of $G$
in $\g^*$.

To describe this, consider the failure of equivariance
$\vartheta(g,p):= \Ad^*_g \mu(p) - \mu(g \cdot p)$, which defines a
smooth function $\vartheta : G \times M \to \g^*$.  It happens that
for $M$ connected, $\vartheta$ does not depend on $p$ (see
Lemma~\ref{lemma:symp-theta-cocycle}) and hence, letting $\pr_1 :
G\times M \to G$ denote the cartesian projection, $\vartheta =
\pr_1^*\theta$ for some smooth group cocycle $\theta: G \to \g^*$.
The cocycle condition
\begin{equation}
  \theta(g_1 g_2) = \Ad^*_{g_1} \theta(g_2) + \theta(g_1)
  \qquad\text{for all $g_1,g_2 \in G$}
\end{equation}
allows us to define a Lie group homomorphism $\rho : G \to \Aff(\g^*)$
by $\rho(g) \alpha = \Ad^*_g \alpha - \theta(g)$ (the sign is conventional), relative to
which $\mu$ is equivariant by construction: $\mu(g\cdot p) =
\rho(g) \mu(p)$.

The group cocycle $\theta$ linearises at the identity to $d_e\theta:
\g \to \g^*$, which defines a Chevalley--Eilenberg $2$-cocycle $c \in
\wedge^2\g^*$ via
\begin{equation}
  c(X,Y) = \left<(d_e\theta)X,Y\right>, \qquad\text{for all $X,Y \in \g$.}
\end{equation}
(See Lemma~\ref{lemma:Dtheta-symplectic-cocycle}.)  Following Souriau,
one says that $\theta$ is a symplectic cocycle.  It follows that
$c(X,Y) = \left\{ \varphi_X,\varphi_Y \right\} - \varphi_{[X,Y]}$,
which is actually constant.  If $[c] = 0 \in H^2(\g)$, so that
$c(X,Y) = -\left<\mu_0, [X,Y]\right>$ for some $\mu_0 \in\g^*$, we can
modify the moment map to $\mu' = \mu - \mu_0$, which is now
equivariant with respect to the coadjoint action: $\mu'(g\cdot p) =
\Ad_g^*\mu'(p)$.

If $[c] \neq 0$ in cohomology, it defines a nontrivial one-dimensional
central extension $\widehat\g$ of $\g$. By the Lie correspondence we
get a one-dimensional central extension $\widehat G$ of the universal
cover of $G$, but it is not a priori clear that we get a central
extension of $G$ itself. In fact, there are known obstructions to a
central extension of the Lie algebra of a Lie group to integrate to a
central extension of that Lie group \cite{MR948561,MR1424633}.

The purpose of this paper is to show that such obstructions are
overcome for symplectic actions of connected Lie groups. More
precisely, we give a proof of Theorem~A below
(see~Theorem~\ref{thm:homo-sym-ext}).

\begin{thma}
  Let $(M,\omega)$ be a simply-connected symplectic manifold admitting
  a transitive action of a connected Lie group $G$ via
  symplectomorphisms.  Then $(M,\omega)$ is the universal cover of a
  coadjoint orbit of a one-dimensional central extension of $G$.
\end{thma}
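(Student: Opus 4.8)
The plan is to promote the non-equivariance cocycle $\theta$ into a genuine central extension and then apply Kostant's covering theorem, exactly along the thread of the introduction. First I would make the moment map construction precise: since $M$ is simply connected and $G$ acts by symplectomorphisms, each closed one-form $\imath_{\xi_X}\omega$ is exact, and choosing primitives $\varphi_X$ linearly in $X$ yields a comoment $\varphi:\g\to C^\infty(M)$ and a moment map $\mu:M\to\g^*$. Invoking Lemmas~\ref{lemma:symp-theta-cocycle} and~\ref{lemma:Dtheta-symplectic-cocycle}, the failure of equivariance is controlled by a smooth group cocycle $\theta:G\to\g^*$ whose linearisation defines the Chevalley--Eilenberg $2$-cocycle $c(X,Y)=\{\varphi_X,\varphi_Y\}-\varphi_{[X,Y]}$, and $\mu$ becomes equivariant for the affine action $\rho(g)\alpha=\Ad^*_g\alpha-\theta(g)$.

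Second, I would build the extension at the infinitesimal level and verify that the relevant representation already lives over $G$. The cocycle $c$ defines $\widehat\g=\g\oplus\RR$ with bracket $[(X,s),(Y,t)]=([X,Y],c(X,Y))$, and the comoment lifts to a genuine Lie algebra homomorphism $\widehat\varphi:\widehat\g\to C^\infty(M)$, $(X,s)\mapsto\varphi_X+s$, into the Poisson algebra; correspondingly $\widehat\mu(p):=(\mu(p),1)$ is an infinitesimally equivariant moment map into $\widehat\g^*=\g^*\oplus\RR$. The decisive observation is that the cocycle identity for $\theta$ makes $\widehat\rho(g)(\alpha,t):=(\Ad^*_g\alpha-t\,\theta(g),t)$ a genuine linear representation $\widehat\rho:G\to\GL(\widehat\g^*)$, and a direct computation shows its differential is precisely the coadjoint representation $\ad^*$ of $\widehat\g$. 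Thus the coadjoint representation of $\widehat\g$ integrates over $G$ itself, not merely over its universal cover, and this is exactly the global input — unavailable from $c$ alone — that circumvents the usual obstructions to integrating a Lie algebra central extension.

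Third, and this I expect to be the main obstacle, I would upgrade this representation-level statement to the existence of a Lie group $\widehat G$ that is a one-dimensional central extension of $G$ with Lie algebra $\widehat\g$ and with $\Ad^*_{\widehat G}=\widehat\rho\circ\pi$, where $\pi:\widehat G\to G$ is the projection. Van Est's theorem produces a central extension $\widehat{\tG}$ of the universal cover $\tG$, and the task is to descend it along $\pi_1(G)\subset\tG$: one must show that the presence of $\widehat\rho$ as a representation of $G$ trivialises the restriction of $\widehat{\tG}$ over $\pi_1(G)$, so that the quotient producing $\widehat G$ is legitimate. Equivalently, one constructs a smooth group $2$-cocycle $\Sigma:G\times G\to\RR$ integrating $c$ directly from $\theta$, passing through the left-invariant presymplectic form on $G$ determined by $c$ and its multiplicative primitive. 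This is where the relationship between symplectic group cocycles, left-invariant presymplectic structures on $G$, and central extensions of $G$ must be made to do the real work.

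Finally, with $\widehat G$ in hand the argument closes by reduction to the equivariant case. Acting on $M$ through $\pi$, the map $\widehat\mu:M\to\widehat\g^*$ is equivariant for the honest coadjoint action of $\widehat G$, its image is the coadjoint orbit $\bigO$ through $(\mu(p),1)$, and $\widehat\mu^*\omega_{\mathrm{KKS}}=\omega$, so $(M,\omega,\widehat\mu)$ and $(\bigO,\omega_{\mathrm{KKS}},i)$ are isomorphic objects after applying Kostant's result that such a morphism is a $\widehat G$-equivariant covering. Simple-connectedness of $M$ then identifies it as the universal cover of the coadjoint orbit $\bigO$ of $\widehat G$. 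When $[c]=0$ the extension is trivial and one recovers a coadjoint orbit of $G$ itself, so both cases are subsumed in the statement.
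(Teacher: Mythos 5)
Your architecture coincides with the paper's (moment map, cocycle $\theta$, Lie algebra extension $\widehat\fg$, integration to $\widehat{G}$, then Kostant's covering theorem), and your first, second and fourth steps are essentially the paper's proof of Theorem~\ref{thm:homo-sym-ext}. The problem is your third step, which you yourself flag as ``the main obstacle'': it is not a proof but a plan. You propose either (a) building $\widehat{\tG}$ over the universal cover and descending along $\pi_1(G)$ by showing that the existence of $\widehat\rho$ trivialises the restriction of $\widehat{\tG}$ over $\pi_1(G)$, or (b) constructing a smooth group $2$-cocycle $\Sigma:G\times G\to\RR$ integrating $c$ from a ``multiplicative primitive'' of the left-invariant presymplectic form. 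Neither is carried out, and this step is the entire content of Theorem~A: it is exactly what separates it from the folklore version (Theorem~C) that assumes $G$ simply connected. A gap here is not cosmetic --- for $G=T^2$ with $c$ the area form the Lie algebra extension (Heisenberg) does \emph{not} integrate, so some global input must genuinely be used, and a sketch that it ``must be made to do the real work'' leaves the theorem unproved.

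The paper closes this step without any descent argument or explicit group $2$-cocycle, by invoking Neeb's criterion (Theorem~\ref{thm:neeb}): the extension $\widehat\fg$ integrates to a central extension of $G$ if and only if, for the left-invariant $2$-form $\Omega$ with $\Omega_e=c$, every $\imath_{\rho_X}\Omega$ is exact. The point you missed is that your cocycle $\theta$ \emph{already is} the primitive you were looking for: setting $\Phi_X(g)=-\pair{\theta(g)}{X}$, the cocycle identity gives $(\eL_{\rho_Y}\Phi_X)(g)=(\Ad^*_gc)(X,Y)$ (Lemma~\ref{lemma:rho-Phi-c-reln}), while $\Omega(\rho_X,\rho_Y)(g)=c(\Ad_{g^{-1}}X,\Ad_{g^{-1}}Y)=(\Ad^*_gc)(X,Y)$ since $(\rho_X)_g=(\lambda_{\Ad_{g^{-1}}X})_g$; hence $\imath_{\rho_X}\Omega=d\Phi_X$ and $\widehat{G}$ exists. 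One further small point: in your final step you take for granted that the coadjoint action of $\widehat{G}$, factored through $G$, agrees with your representation $\widehat\rho$, so that $\widehat\mu$ is equivariant for the honest coadjoint action. That identification needs the uniqueness of symplectic group cocycles integrating a given $c$ over a connected group (Proposition~\ref{prop:symp-cocycles-unique}); alternatively one can bypass it entirely, as the paper does, by noting that $\widehat\varphi$ is a Lie algebra homomorphism and a comoment map for the pulled-back $\widehat{G}$-action, so $(M,\omega,\widehat\mu)$ is a hamiltonian $\widehat{G}$-space by Proposition~\ref{prop:hamiltonian-action}, and then applying Corollary~\ref{cor:covering}.
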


This theorem has acquired an almost ``folkloric'' quality in that,
despite being quoted often, to our knowledge there is no proof in the
literature which does not make additional and, from our perspective,
unwarranted assumptions.  In fact, in a recent paper
\cite{donato_iglesias-zemmour_2021}, in which it is shown that, in the
diffeological category, every symplectic manifold is a coadjoint orbit
of its group of hamiltonian diffeomorphisms, one can read that
\begin{quotation}
  the optimal result in the category of manifolds states that the
  symplectic manifold is, up to covering, an affine coadjoint orbit of
  the group.
\end{quotation}

The question of the construction of the central extension of $G$ from
a symplectic cocycle was addressed in \cite{MR2587386}. There is no
analogue of Theorem~A in that paper, although they outline a procedure
to integrate the symplectic cocycle to a central extension of the Lie
group and illustrate it with two examples.  To our knowledge, the first
published statement of Theorem~A is by Kirillov
\cite[§15]{MR0412321}. (See also the more recent book
\cite{MR2069175}.) Theorem~1 in Section~15.2 of \cite{MR0412321} (or
Proposition~4 in Section~1.4 of \cite{MR2069175}) states a roughly
equivalent result:

\begin{thmb}
  Every homogeneous symplectic manifold whose group of motions is a
  connected Lie group is locally isomorphic to an orbit in the
  coadjoint representation of the group $G$ or a central extension of
  $G$ with the aid of $\RR$.
\end{thmb}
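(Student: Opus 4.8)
The plan is to realise \((M,\omega)\) as a Hamiltonian space for a suitable one-dimensional central extension of \(G\) and then to invoke Kostant's covering theorem. First I would carry out the moment-map construction sketched in the introduction: using simple-connectivity of \(M\) to produce the comoment map \(\varphi:\g\to C^\infty(M)\) and the moment map \(\mu:M\to\g^*\), and then recording its failure of equivariance as the symplectic cocycle \(\theta:G\to\g^*\) (Lemma~\ref{lemma:symp-theta-cocycle}) whose linearisation is the Chevalley--Eilenberg \(2\)-cocycle \(c\in\wedge^2\g^*\) with \(c(X,Y)=\{\varphi_X,\varphi_Y\}-\varphi_{[X,Y]}\) (Lemma~\ref{lemma:Dtheta-symplectic-cocycle}). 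Relative to the affine action \(\rho(g)\alpha=\Ad^*_g\alpha-\theta(g)\) the map \(\mu\) is equivariant, so \(\mu(M)\) is a single affine orbit \(\bigO\subset\g^*\). The entire statement then reduces to upgrading this affine orbit to a genuine coadjoint orbit, that is, to producing the central extension \(\widehat G\) of \(G\) whose Lie algebra is \(\widehat\g:=\g\oplus_c\RR\); when \(c\) is exact this extension will be trivial and one recovers an orbit of \(G\) itself, matching the two alternatives in the statement.

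Second, I would package the central data on the dual side. Writing \(\widehat\g^*=\g^*\oplus\RR\), a short computation shows that the affine action lifts to the linear action \(\widehat\rho(g)(\alpha,t)=(\Ad^*_g\alpha-t\,\theta(g),\,t)\) and that its derivative coincides with the coadjoint representation \(\widehat{\ad}^*\) of \(\widehat\g\) (which, as the centre acts trivially, descends to \(\g\)). The affine orbit \(\bigO\) then sits inside the hyperplane \(\{t=1\}\), the comoment map assembles into a genuine Lie-algebra homomorphism \(\widehat\g\to C^\infty(M)\), \((X,a)\mapsto\varphi_X+a\), and \(\widehat\mu=(\mu,1):M\to\widehat\g^*\) is an infinitesimally equivariant moment map for the \(\widehat\g\)-action.

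The hard part is the integration step. By Lie's theorems \(\widehat\g\) integrates to a central extension \(\widehat{\tG}\) of the universal cover \(\tG\), and the obstruction this paper is concerned with is whether this descends to a central extension of \(G\) itself. Here the global cocycle \(\theta\) is decisive. Because \(\widehat\rho\) is defined on all of \(G\) and agrees infinitesimally with \(\widehat{\ad}^*\), the coadjoint action of the connected group \(\widehat{\tG}\) on \(\widehat\g^*\) factors through \(G\); consequently the preimage \(K\subset\widehat{\tG}\) of \(\pi_1(G)\subset\tG\) acts trivially under \(\widehat\Ad^*\), hence under \(\widehat\Ad\), so \(K\) lies in the centre of \(\widehat{\tG}\). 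In particular \(K\) is abelian, and the extension \(0\to\RR\to K\to\pi_1(G)\to0\) of the finitely generated abelian group \(\pi_1(G)\) by the divisible group \(\RR\) splits, since \(\operatorname{Ext}_{\ZZ}(\pi_1(G),\RR)=0\). A splitting provides a discrete central subgroup \(\Gamma\cong\pi_1(G)\) of \(\widehat{\tG}\) with \(\Gamma\cap\RR=0\), and I would then set \(\widehat G:=\widehat{\tG}/\Gamma\), a one-dimensional central extension \(0\to\RR\to\widehat G\to G\to0\) with Lie algebra \(\widehat\g\). I expect this centrality of \(K\) to be the only genuinely delicate point, since it is precisely the fact that the symplectic cocycle lives on \(G\) rather than merely on \(\tG\) that defeats the integration obstruction blocking naive approaches.

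Finally I would close the loop. Since \(\pi_1(G)\) and the central \(\RR\) both act trivially on \(M\), the \(\widehat{\tG}\)-action descends to \(\widehat G\), making \((M,\omega,\widehat\mu)\) a Hamiltonian \(\widehat G\)-space whose comoment map is a homomorphism and hence, \(\widehat G\) being connected, whose moment map is equivariant. Its image is precisely the coadjoint orbit \(\widehat\bigO\) of \(\widehat G\) through any of its points, identified with \(\bigO\) under \(\{t=1\}\cong\g^*\), and \(\widehat\mu^*\omega_{\mathrm{KKS}}=\omega\). Kostant's covering theorem \cite[Proposition~5.1.1]{MR0294568} then makes \(\widehat\mu:M\to\widehat\bigO\) a \(\widehat G\)-equivariant symplectic covering, and simple-connectivity of \(M\) identifies it as the universal cover, which is the assertion.
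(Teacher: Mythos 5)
Your proposal is correct, and it diverges from the paper's proof precisely at the step where the paper leans on external machinery. Both arguments share the same skeleton: the moment map and its symplectic cocycle $\theta$, the Chevalley--Eilenberg cocycle $c$, the extended comoment map into $C^\infty(M)$ and extended moment map into $\widehat\fg^*$, and the final appeal to Kostant's covering theorem (Theorem~\ref{thm:kostant-covering}) once $(M,\omega,\widehat\mu)$ is exhibited as a hamiltonian $\widehat{G}$-space. The difference is the integration step. The paper (proof of Theorem~\ref{thm:homo-sym-ext}) verifies Neeb's criterion \eqref{eq:neeb}: the functions $\Phi_X(g)=-\pair{\theta(g)}{X}$ satisfy $\imath_{\rho_X}\Omega=d\Phi_X$ for the left-invariant form $\Omega$ associated to $c$ (via Lemma~\ref{lemma:rho-Phi-c-reln}), and Theorem~\ref{thm:neeb} then produces $\widehat{G}$ as a black box. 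You instead build $\widehat{G}$ by hand: integrate $\widehat\fg$ to a central extension $\widehat{\tG}$ of $\tG$; observe that since the action \eqref{eq:hat-g*-theta-action} is defined on all of $G$ and has derivative equal to the coadjoint representation of $\widehat\fg$, uniqueness of homomorphisms of connected Lie groups with prescribed derivative forces the coadjoint representation of $\widehat{\tG}$ to factor through $G$, so the preimage $K$ of $\pi_1(G)$ lies in $\ker\widehat\Ad=Z(\widehat{\tG})$; then split $0\to\RR\to K\to\pi_1(G)\to 0$ (divisibility of $\RR$ alone gives $\operatorname{Ext}_{\ZZ}(\pi_1(G),\RR)=0$, so finite generation is not actually needed) and quotient $\widehat{\tG}$ by the resulting lift $\Gamma\cong\pi_1(G)$, which is discrete because it meets each connected component of the closed subgroup $K$ exactly once. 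I checked the delicate points and they hold: in particular the centrality of $K$ is sound, and it is, as you say, exactly where the fact that $\theta$ lives on $G$ rather than merely on $\tG$ defeats the usual integration obstruction. As for what each approach buys: the paper's route is shorter given Neeb's theorem and feeds directly into its reinterpretation in terms of invariant presymplectic structures (Section~\ref{sec:group-presymp}), whereas yours is self-contained modulo standard Lie theory and amounts to an independent proof of the nontrivial direction of Theorem~\ref{thm:group-ext-symp-cocycle} (a symplectic group cocycle integrating $c$ implies the Lie algebra extension integrates over $G$), making explicit the mechanism that Neeb's criterion encodes. Note finally that, like the paper, what you actually prove is the simply-connected statement of Theorem~A, which is the precise content the paper assigns to Kirillov's phrasing quoted in the statement; the ``local isomorphism'' conclusion follows because the covering $\widehat\mu$ is a local symplectomorphism.
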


The statement of the theorem notwithstanding, a closer look at the
discussion preceding the theorem reveals that this is not the result
which is proved.  (The same applies to the discussion in
\cite[Section~1.4]{MR2069175}.)  Kirillov starts with $G$ connected,
but then passes to the universal covering group.  He then considers a
central extension $\g_1$ of the Lie algebra $\g$ of $G$, giving a
central extension $G_1$ of the universal cover of $G$.  He then shows
that $M$ is a homogeneous space of $G_1$.  Hence the theorem which is
actually proved in \cite[§15.2]{MR0412321} could be paraphrased as
follows:

\begin{thmc}
  Every homogeneous symplectic manifold whose group of motions is a
  connected \emph{and simply-connected} Lie group $G$ is locally
  isomorphic to an orbit in the coadjoint representation of $G$ or a
  one-dimensional central extension of $G$.
\end{thmc}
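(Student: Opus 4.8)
The plan is to reduce immediately to the case in which $M$ is simply-connected and then to run the moment-map construction sketched in the introduction, using the simple-connectedness of $G$ to integrate the relevant Lie-algebra central extension. First I would pass to the universal cover $\pi : \widetilde M \to M$ equipped with the pullback symplectic form $\pi^*\omega$. Because $G$ is simply-connected, the action map $G \times \widetilde M \to M$ lifts through $\pi$ to a smooth map $G \times \widetilde M \to \widetilde M$, and uniqueness of lifts forces this lift to be a group action by symplectomorphisms of $\pi^*\omega$. The lifted action is again transitive: every orbit in $\widetilde M$ projects onto $M$, so every stabiliser in $\widetilde M$ is contained in a stabiliser in $M$ and hence every orbit has dimension $\dim M = \dim \widetilde M$ and is therefore open; since $\widetilde M$ is connected, there is a single orbit. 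As $\pi$ is a symplectic covering, $\widetilde M$ is locally isomorphic to $M$, so it suffices to prove the statement for the simply-connected homogeneous symplectic $G$-space $(\widetilde M, \pi^*\omega)$. I therefore assume from now on that $M$ is simply-connected.

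With $M$ simply-connected the forms $\imath_{\xi_X}\omega$ are exact, so the comoment map $\varphi : \g \to C^\infty(M)$ and the moment map $\mu : M \to \g^*$ are defined as in the introduction, and (by Lemma~\ref{lemma:symp-theta-cocycle} and Lemma~\ref{lemma:Dtheta-symplectic-cocycle}) the quantity $c(X,Y) = \{\varphi_X,\varphi_Y\} - \varphi_{[X,Y]}$ is a constant Chevalley--Eilenberg $2$-cocycle $c \in \wedge^2\g^*$. The argument then splits according to the class $[c] \in H^2(\g)$. If $[c]=0$, I would write $c(X,Y) = -\left<\mu_0,[X,Y]\right>$ and replace $\mu$ by $\mu' = \mu-\mu_0$; this makes the comoment map a Lie-algebra homomorphism, so $\mu'$ is equivariant for the coadjoint action and, the action on $M$ being transitive, its image is a single coadjoint orbit of $G$.

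If instead $[c]\neq 0$, I would form the one-dimensional central extension $\widehat\g = \g\oplus\RR z$ with bracket twisted by $c$, extend the comoment map by sending $z$ to the constant function $1$, and observe that the relation $\{\varphi_X,\varphi_Y\} = \varphi_{[X,Y]} + c(X,Y)$ is precisely the statement that the extended comoment map $\widehat\g \to C^\infty(M)$ is a genuine Lie-algebra homomorphism. This is the one place where the hypothesis on $G$ is essential: since $G$ is simply-connected, the unique connected simply-connected group integrating $\widehat\g$ is a one-dimensional central extension $\widehat G$ of $G$ \emph{itself}, so no integrability obstruction arises, and the $G$-action on $M$ lifts (trivially on the centre) to a transitive $\widehat G$-action with an equivariant moment map $\widehat\mu : M \to \widehat\g^*$ whose image is a single coadjoint orbit of $\widehat G$.

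In either case I would finish by checking that the (shifted or extended) moment map is a morphism of Hamiltonian spaces, i.e. that it pulls $\omega_{\mathrm{KKS}}$ back to $\omega$; this is a direct computation using transitivity, the equivariance relation $d\mu(\xi_X) = \ad^*_X\mu$, and the defining property of $\omega_{\mathrm{KKS}}$. Kostant's covering theorem (\cite[Proposition~5.1.1]{MR0294568}, quoted in the introduction) then shows that $\mu$ (resp.\ $\widehat\mu$) is a covering onto the orbit, and since $M$ is simply-connected it is the universal cover. The obstacle that makes the general Theorem~A delicate — integrating the Lie-algebra extension determined by $[c]$ to a central extension of $G$ rather than merely of its universal cover — is exactly what simple-connectedness of $G$ removes here; the only remaining points requiring care are the transitivity of the lifted action on $\widetilde M$ and the verification that the moment map pulls $\omega_{\mathrm{KKS}}$ back to $\omega$.
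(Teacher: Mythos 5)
Your proposal is correct, but it takes a genuinely different route from the paper. The paper never proves Theorem~C on its own: it is subsumed by the stronger Theorem~A (Theorem~\ref{thm:homo-sym-ext}), whose proof constructs the central extension \(\widehat{G}\) of \(G\) by verifying Neeb's integrability criterion (Theorem~\ref{thm:neeb}), namely that the functions \(\Phi_X(g)=-\pair{\theta(g)}{X}\) built from the symplectic group cocycle \(\theta\) of Lemma~\ref{lemma:symp-theta-cocycle} satisfy \(\imath_{\rho_X}\Omega=d\Phi_X\). You instead use the hypothesis \(\pi_1(G)=1\) at exactly that step: the simply-connected group integrating \(\widehat\fg\) maps onto \(G\) by Lie's second theorem, its kernel is connected by the homotopy exact sequence (this is precisely where \(\pi_1(G)=1\) enters) and is central because it is the one-parameter subgroup generated by a central element of \(\widehat\fg\) and \(\widehat{G}\) is connected; so the integration obstruction that Neeb's theorem is designed to handle never arises. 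This is essentially the classical argument (Kirillov, Kostant, Chu, Sternberg) which the paper's introduction identifies as the reason Theorem~C carries the simple-connectedness hypothesis in the first place. Beyond that, you add a reduction of a general homogeneous \(M\) to its universal cover by lifting the action --- again legitimate only because \(G\) is simply connected --- which correctly accounts for the ``locally isomorphic'' phrasing of Theorem~C (Theorem~A instead assumes \(M\) simply connected from the start), and you split the cases \([c]=0\) and \([c]\neq 0\) to match the ``or'' in the statement, where the paper treats both uniformly through a possibly trivial extension. The trade-off is clear: your argument is more elementary and self-contained (covering-space theory and classical Lie theory; no Neeb, none of the symplectic-cocycle machinery of Section~\ref{sec:symp-cocycle}), but it collapses without \(\pi_1(G)=1\) and so cannot be upgraded to Theorem~A, which is the paper's actual goal; the closing steps you share with the paper --- the extended comoment map being a Lie algebra homomorphism, equivariance of \(\widehat\mu\) (Proposition~\ref{prop:hamiltonian-action}), the pullback identity \(\widehat\mu^*\omega_{\mathrm{KKS}}=\omega\) (Proposition~\ref{prop:moment-map-is-morphism}), and Kostant's covering theorem (Theorem~\ref{thm:kostant-covering}, Corollary~\ref{cor:covering}) --- are exactly the ones the paper uses to finish.
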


In fact, the assumption that $G$ is simply-connected is common in the
literature, appearing also in Kostant \cite[§5]{MR0294568}, Chu
\cite{MR342642} and Sternberg \cite{MR379759}, all of whom prove
equivalent versions of Theorem~C.

One might be forgiven for thinking that nothing is lost by assuming ab
initio that $G$ is simply-connected: after all, if a Lie group acts
transitively on a manifold, so does its universal cover. Alas, we
would like to argue that that the topology of $G$ matters. In
applications, one is often trying to classify simply-connected
homogeneous symplectic manifolds of a given Lie group $G$ and
Theorem~A says that we need to consider coadjoint orbits of a
one-dimensional central extension of $G$.  Such extensions are
classified by the group cohomology $H^2_{\text{loc}}(G;\RR)$ (the
topology of the kernel of the central extension ends up being of no
consequence) whose cochains are smooth near the identity.  Even if we
restrict to the smooth group cohomology group $H^2(G;\RR)$, which
classifies central extensions that are diffeomorphic to a product
$G \times \RR$, the van~Est theorem \cite{MR0059285} (see also the
recent clear exposition in \cite{MR4081118}) gives an isomorphism
$H^2(G;\RR) \cong H^2(\g,\k)$, where $H^2(\g,\k)$ is the
Chevalley--Eilenberg cohomology of $\g$ relative to the Lie algebra
$\k$ of the maximal compact subgroup $K \subset G$. Since the maximal
compact subgroups of $G$ and of its universal cover need not coincide,
the choice of $G$ does matter. Therefore Theorem~C is weaker than what
is needed in applications and we require the full strength of
Theorem~A.

The question of whether a central extension of the Lie algebra $\g$ of
a Lie group $G$ integrates to a central extension of $G$ has been
studied by Tuynman and Wiegerinck \cite{MR948561} and later by Neeb
\cite{MR1424633}, on whose results we rely.  For the case of a
one-dimensional central extension $\widehat\g$ of $\g$ with
Chevalley--Eilenberg cocycle $c \in \wedge^2\g^*$, one can prove that
$\widehat\g$ integrates to a one-dimensional central extension of $G$
if and only if the left-invariant presymplectic structure
$\Omega \in \Omega^2(G)$ associated to the cocycle $c$ admits a
``moment map'' for the left action of $G$ on itself; that is, if for
every right-invariant vector field $\rho_X \in \eX(G)$, the closed
one-form $\imath_{\rho_X}\Omega$ is exact.  It is this result which we
shall exploit to show that the affine orbit of $G$ in $\g^*$ arising
as the image of the moment map of a symplectic $G$-action is a
(linear) coadjoint orbit of a one-dimensional central extension
$\widehat G$ of $G$.  We will do so by showing that the symplectic
cocycle $\theta : G \to \g^*$ gives us the desired moment map.

In the case where the symplectic manifold $(M,\omega)$ is
pre-quantisable, so that (some multiple of) $\omega$ defines an
integral class in de~Rham cohomology, Neeb and Vizman \cite{MR2600913}
construct the central extension $\widehat G$ of $G$ by pulling back
the pre-quantum bundle to the group via the orbit map $G \to M$.

This paper is organised as follows. In Section~\ref{sec:symp-action}
we review the basic definitions of Lie group actions on symplectic
manifolds, moment maps and hamiltonian $G$-spaces. We review the
conditions for the equivariance of the moment map
(Proposition~\ref{prop:hamiltonian-action}) and state a theorem of
Kostant's (Theorem~\ref{thm:kostant-covering}) concerning the nature
of morphisms between hamiltonian $G$-spaces. We also review the
notions of symplectic cocycles and the corresponding
Chevalley--Eilenberg cocycles and discuss the Lie algebra extension
resulting from a non-equivariant moment map. In
Section~\ref{sec:symp-cocycle} we go deeper into the relation between
symplectic cocycles of a Lie group and their corresponding
Chevalley--Eilenberg cocycles. In Section~\ref{sec:group-ext} we study
one-dimensional central extensions of a Lie group $G$ and its
coadjoint orbits, which can be interpreted as affine orbits of $G$ on
$\g^*$. We show how such a central extension gives rise to a
symplectic cocycle of $G$ which agrees with the one defined by the
affine orbits. We end the section recording the theorem of Neeb
(Theorem~\ref{thm:neeb}) on the conditions for a central extension of
$\g$ to integrate to a central extension of $G$.
Section~\ref{sec:sympl-struct-central-extn} contains our proof of
Theorem~A (Theorem~\ref{thm:homo-sym-ext}), which we then reinterpret
in a number of ways: in terms of actions of the original group 
\(G\) instead of its central extension
(Section~\ref{sec:interpretation-G-action}); in terms of affine orbits
in \(\fg^*\) rather than coadjoint orbits of the central extension
(Section~\ref{sec:interpretation-affine-G-action}); and lastly in terms 
of left-invariant presymplectic structures on Lie groups
(Section~\ref{sec:group-presymp}). 
Finally, in Section~\ref{sec:exact-symp}, we show that the problem of
existence of an appropriate central group extension simplifies
significantly when one considers \emph{exact} symplectic symmetric
spaces; in particular, the central extension is geometrically
trivial (diffeomorphic to a product) and can be explicitly constructed.
We end the paper with two appendices on Lie groups, Lie algebras and 
their cohomology.

\section*{Acknowledgements}
\label{sec:acknowledgements}

This work arose out of a project of the second-named author (JMF) with
Ross Grassie and Stefan Prohazka in which we set out to describe
elementary systems with Lifshitz symmetry. Theorem~A simplifies some
of the calculations because it would allow ignoring some of the (many)
central extensions of the Lifshitz Lie algebras, but we were unable to
find a published proof of that theorem.  JMF takes pleasure in
thanking them for many useful discussions.  JMF would also like to
thank Dieter Van den Bleeken for helpful correspondence on this topic.
The research of AB is partially funded by an STFC postgraduate
studentship.

\section{Symplectic and hamiltonian actions}
\label{sec:symp-action}

\subsection{Basic definitions}
\label{sec:symp-action-basics}

Let \((M,\omega)\) be a symplectic manifold equipped with a transitive,
symplectic (left) action by a connected Lie group \(G\). It follows that
\(M\) must also be connected. Each group element \(g\in G\) defines a 
symplectomorphism \(p\mapsto g\cdot p\) of \(M\), the pushforward and 
pullback of which we denote by \(g_*\) and \(g^*\) respectively. 
The \emph{fundamental vector field associated with \(X\in\fg=\Lie(G)\)}
is the vector field \(\xi_X\) given by
\((\xi_X)_p = \dv{t} \exp(tX)\cdot p\eval_{t=0}\). This assignment
defines a linear map \(\xi:\fg\to\eX(M)\) given by \(X\mapsto\xi_X\)
which, by a straightforward calculation, is equivariant in the sense
that \(g_*\xi_X=\xi_{\Ad_gX}\). Infinitesimally, this gives
\(\comm{\xi_X}{\xi_Y}=-\xi_{\comm{X}{Y}}\) for all \(X,Y\in\fg\), so
\(\xi\) is a Lie algebra anti-homomorphism.

The action being symplectic says that \(g^*\omega=\omega\) for all
\(g\in G\); infinitesimally, \(\eL_{\xi_X}\omega=0\) for all
\(X\in\fg\). Using \(d\omega=0\) and Cartan's magic formula
\(\eL_{\xi_X}\omega=d\imath_{\xi_X}\omega+\imath_{\xi_X}d\omega\), we
see that \(\imath_{\xi_X}\omega\) is a closed one-form. Suppose that
it is actually exact for all \(X\in\fg\); that is, there exist some
functions \(\varphi_X\in C^\infty\) such that
\begin{equation}\label{eq:symp-ham-vfs}
  \imath_{\xi_X} \omega = d\varphi_X.
\end{equation}
Note that if \(M\) is simply-connected, as we will later assume, then
such functions do indeed exist. Each \(\varphi_X\) is uniquely defined
only up to addition of a constant, so we may assume without
loss of generality that the map \(\varphi:\fg\to C^\infty(M)\) sending
\(X\mapsto\varphi_X\) is linear. It is then
uniquely defined only up to addition of a linear map \(\fg\to\RR\).

The symplectic form \(\omega\) induces a Poisson bracket on
\(C^\infty(M)\): for \(f\in C^\infty(M)\), we let \(\chi_f\) be the
hamiltonian vector field associated to \(f\), i.e., the vector field
satisfying \(\imath_{\chi_f}\omega=df\), which exists and
is unique by the nondegeneracy of \(\omega\). The Poisson bracket
of \(f,g\in C^\infty(M)\) is given by
\begin{equation}
	\acomm{f}{g} = \omega(\chi_f,\chi_g) = -\eL_{\chi_f}g.
\end{equation}
In particular, note that \(\chi_{\varphi_X}=\xi_X\) and
\begin{equation}
	\acomm{\varphi_X}{\varphi_Y} = \omega(\xi_X,\xi_Y) = -\eL_{\xi_X}\varphi_Y.
\end{equation}

\subsection{The moment map and associated cocycles}
\label{sec:moment-map-cocycle}

We define a map \(\mu:M\to\fg^*\) known as the \emph{moment map}
associated to the symplectic action by
\begin{equation}\label{eq:moment-map}
	\pair{\mu(p)}{X} = \varphi_X(p),
\end{equation}
where \(\pair{-}{-}:\fg^*\times \fg\to\RR\) is the natural pairing.
Dually, \(\varphi\) is referred to as the \emph{comoment
  map}. Changing the comoment map by the addition of a linear map
\(\fg\to \RR\) (i.e., an element of \(\fg^*\)) corresponds to changing
\(\mu\) by the addition of the same element of \(\fg^*\).

Recalling that \(\fg^*\) comes equipped with a coadjoint action
\(\Ad^*: G\to\GL(\fg^*)\), it is natural to ask whether \(\mu\) is (or
can be defined to be) equivariant. We thus define a map
\(\vartheta:G\times M\to \fg^*\) by
\begin{equation}\label{eq:vartheta-def}
	\vartheta(g,p) = \Ad^*_g\mu(p) - \mu(g \cdot p)
\end{equation}
which measures the failure of \(\mu\) to be equivariant. 

\begin{lemma}\label{lemma:symp-theta-cocycle}
	The map \(\vartheta\) is independent of \(p\in M\) and so \(\vartheta=\pr_1^*\theta\) for some map \(\theta:G\to\fg^*\), where \(\pr_1:G\times M\to G\) is the projection to the first factor. Moreover, \(\theta\) satisfies
	\begin{equation}\label{eq:symp-theta-cocycle}
		\theta(g_1g_2) = \Ad^*_{g_1}\theta(g_2) + \theta(g_1).
	\end{equation}
\end{lemma}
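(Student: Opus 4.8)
The plan is to establish the two assertions in order: first that $\vartheta(g,p)$ does not depend on $p$, and then that the resulting map $\theta$ satisfies the cocycle identity \eqref{eq:symp-theta-cocycle}.

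For the $p$-independence, I would fix $g \in G$ and $X \in \fg$ and compute the pairing $\pair{\vartheta(g,p)}{X}$ as a function on $M$, aiming to show it is locally constant; since $M$ is connected this forces independence of $p$. Unwinding the definitions, $\pair{\vartheta(g,p)}{X} = \pair{\Ad^*_g\mu(p)}{X} - \pair{\mu(g\cdot p)}{X} = \varphi_{\Ad_{g^{-1}}X}(p) - \varphi_X(g\cdot p)$. The natural way to show this is constant in $p$ is to show its differential vanishes, equivalently that its derivative along every fundamental vector field $\xi_Y$ is zero (the $\xi_Y$ span each tangent space by transitivity). Here I expect to use the equivariance $g_*\xi_Y = \xi_{\Ad_g Y}$, the relation $\chi_{\varphi_X} = \xi_X$, and the identity $\acomm{\varphi_X}{\varphi_Y} = -\eL_{\xi_X}\varphi_Y$ recorded just before the lemma. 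The term $\varphi_X(g\cdot p) = (g^*\varphi_X)(p)$ must be differentiated using the chain rule and the $G$-invariance of $\omega$ to relate $\eL_{\xi_Y}(g^*\varphi_X)$ back to a bracket $\acomm{\varphi_{\Ad_{g^{-1}}Y}}{\varphi_X}$-type expression. The two contributions should cancel precisely because the Poisson bracket obeys the Jacobi-type antisymmetry, leaving $\eL_{\xi_Y}\pair{\vartheta(g,\cdot)}{X} = 0$. Once this holds for all $Y$, we conclude $\vartheta(g,p)$ is constant in $p$ and set $\theta(g) := \vartheta(g,p)$.

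For the cocycle identity, I would simply evaluate $\theta(g_1 g_2)$ using a convenient basepoint and the definition $\theta(g) = \Ad^*_g\mu(p) - \mu(g\cdot p)$. Choosing the point $g_2 \cdot p$ for the element $g_1$ and combining with the expression for $\theta(g_2)$ at $p$, the associativity of the action $(g_1 g_2)\cdot p = g_1\cdot(g_2\cdot p)$ together with the homomorphism property $\Ad^*_{g_1 g_2} = \Ad^*_{g_1}\Ad^*_{g_2}$ yields a telescoping computation. Explicitly, $\theta(g_1 g_2) = \Ad^*_{g_1 g_2}\mu(p) - \mu((g_1 g_2)\cdot p)$, and inserting $\pm\,\Ad^*_{g_1}\mu(g_2\cdot p)$ splits this into $\Ad^*_{g_1}\bigl(\Ad^*_{g_2}\mu(p) - \mu(g_2\cdot p)\bigr) + \bigl(\Ad^*_{g_1}\mu(g_2\cdot p) - \mu(g_1\cdot(g_2\cdot p))\bigr)$, which is exactly $\Ad^*_{g_1}\theta(g_2) + \theta(g_1)$ after recognising each grouped term as a value of $\theta$ (using $p$-independence for the second term evaluated at $g_2\cdot p$).

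The main obstacle is the first part: carefully tracking the coadjoint twists and the $g$-pullbacks while differentiating $\pair{\vartheta(g,p)}{X}$ along fundamental vector fields, and verifying that the two Poisson-bracket contributions cancel. The cocycle identity is then a routine algebraic consequence once $p$-independence is in hand, so I expect it to require essentially no work beyond bookkeeping.
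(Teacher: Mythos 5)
Your proposal is correct and follows essentially the same route as the paper: both show that the differential of \(p \mapsto \pair{\vartheta(g,p)}{X}\) vanishes using \(g^*\omega = \omega\) together with the equivariance \((g^{-1})_*\xi_X = \xi_{\Ad_{g^{-1}}X}\), then conclude by connectedness of \(M\), and the telescoping computation for the cocycle identity is identical. The only cosmetic differences are that the paper pairs \(d\Theta_{g,X}\) with an arbitrary vector field \(\eta\) rather than only fundamental fields (so transitivity is not needed for that step), and the cancellation comes purely from this invariance/equivariance rather than from any ``Jacobi-type antisymmetry'' of the Poisson bracket as you suggest.
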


\begin{proof}
	For the first claim, let \(\Theta_{g,X}\in C^\infty(M)\) be the function given by \(\Theta_{g,X}(p)=\pair{\vartheta(g,p)}{X}\) for \(g\in G\) and \(X\in\fg\). We will show that \(\Theta_{g,X}\in C^\infty(M)\) is locally constant (and therefore constant, since \(M\) is connected). We have
	\begin{equation}
		\Theta_{g,X}(p)
			= \pair{\Ad_g^*\mu(p)}{X} - \pair{\mu(g\cdot p)}{X}
			= \varphi_{\Ad_{g^{-1}}X}(p)-\varphi_X(g\cdot p).
	\end{equation}
	so we see that \(\Theta_{g,X}=\varphi_{\Ad_{g^{-1}}X}-g^*\varphi_X\).
	Then
	\begin{equation}
		d\Theta_{g,X}
			= d\varphi_{\Ad_{g^{-1}}X} - g^*d\varphi_X
			= \imath_{\xi_{\Ad_{g^{-1}}X}}\omega - g^*\imath_{\xi_X}\omega
	\end{equation}
	so pairing with an arbitrary vector field \(\eta\in\eX(M)\), we find
	\begin{equation}
		(d\Theta_{g,X})(\eta) = \omega(\xi_{\Ad_{g^{-1}}X},\eta) - \omega(\xi_X,g_*\eta) = \omega(\xi_{\Ad_{g^{-1}}X},\eta) - (g^*\omega)(g^{-1}_*\xi_X,\eta)
	\end{equation}
	but since the action of \(G\) on \((M,\omega)\) is symplectic (\(g^*\omega=\omega\)) and \((g^{-1})_*\xi_X = \xi_{\Ad_{g^{-1}}X}\), this vanishes, so indeed \(d\Theta_{g,X}=0\). We thus have a map \(\theta:G\to\fg^*\) given by \(\theta(g)=\Ad^*_g\mu(p) - \mu(g \cdot p)\), or
	\begin{equation}\label{eq:theta-phi-reln}
		\pair{\theta(g)}{X} 
		= \varphi_{\Ad_{g^-1}X}(p) - \varphi_X(g\cdot p)
	\end{equation}
	for arbitrary \(p\in M\). For the second claim, we now compute directly
	\begin{equation}
	\begin{split}
		\theta(g_1g_2) 
		&= \Ad^*_{g_1g_2}\mu(p)-\mu(g_1g_2p)\\
		&= \Ad^*_{g_1}\Ad^*_{g_2}\mu(p) - \mu(g_1(g_2p))\\
		&= \Ad^*_{g_1}\qty(\theta(g_2)-\mu(g_2p)) - (\Ad^*_{g_1}\mu(g_2p)-\theta(g_1))\\
		&= \Ad^*_{g_1}\theta(g_2) + \theta(g_1)
	\end{split}
	\end{equation}
so \(\theta\) satisfies \eqref{eq:symp-theta-cocycle} as claimed.
\end{proof}

Equation~\eqref{eq:symp-theta-cocycle} says that \(\theta\) is a 1-cocycle for \(G\) with values in the coadjoint representation (see Appendix~\ref{sec:group-cohomology}). The following result shows in particular that \(\theta\) is a \emph{symplectic} group cocycle (see Section~\ref{sec:symp-cocycle}).

\begin{lemma}\label{lemma:Dtheta-symplectic-cocycle}
	The derivative of \(\theta\) at the identity is a linear map
        \(d_e\theta: \fg \to \fg^*\). The bilinear form \(c\) on
        \(\fg\) defined by \(c(X,Y)=\pair{d_e\theta(X)}{Y}\) is
        alternating (i.e., it lies in \(\Wedge^2\fg^*\)) and is a
        Chevalley--Eilenberg cocycle.
\end{lemma}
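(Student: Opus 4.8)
The plan is to reduce both assertions to the single identity
\[
  c(X,Y) = \acomm{\varphi_X}{\varphi_Y} - \varphi_{\comm{X}{Y}},
\]
after which antisymmetry and the cocycle property are formal consequences of, respectively, the antisymmetry of the two brackets and the two Jacobi identities. First I would note that setting $g_1=g_2=e$ in \eqref{eq:symp-theta-cocycle} gives $\theta(e)=0$, so that $\theta$ is a smooth map sending $e$ to the origin and its derivative is a well-defined linear map $d_e\theta:\fg\to T_0\fg^*\cong\fg^*$; smoothness of $\theta$ is inherited from that of $\mu$ and of the action.

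The heart of the argument is to compute $d_e\theta$ from the expression \eqref{eq:theta-phi-reln} for $\pair{\theta(g)}{X}$. Substituting $g=\exp(tY)$ and differentiating at $t=0$, linearity of $\varphi$ turns the first term into $\varphi$ evaluated on $\dv{t}\Ad_{\exp(-tY)}X\eval_{t=0}=-\comm{Y}{X}$, while the curve $t\mapsto\exp(tY)\cdot p$ has velocity $(\xi_Y)_p$ and so turns the second term into $\eL_{\xi_Y}\varphi_X$. Using $\acomm{\varphi_X}{\varphi_Y}=-\eL_{\xi_X}\varphi_Y$ and then swapping the roles of $X$ and $Y$ rearranges the result into the displayed identity for $c$. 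One sanity check worth recording is that this $c$ is manifestly constant on $M$, consistent with $\theta$ (hence $d_e\theta$) being independent of $p$ by Lemma~\ref{lemma:symp-theta-cocycle}.

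Antisymmetry of $c$ is then immediate: the Poisson bracket is antisymmetric and $\varphi$ is linear, so both terms on the right-hand side of the identity change sign under $X\leftrightarrow Y$, giving $c\in\Wedge^2\fg^*$. It is worth emphasising that this is exactly the step that uses the \emph{symplectic} origin of $\theta$; a general $\fg^*$-valued group $1$-cocycle need not linearise to an alternating form.

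For the Chevalley--Eilenberg cocycle condition I would exploit the constancy of $c$. Rewriting the identity as $\varphi_{\comm{X}{Y}}=\acomm{\varphi_X}{\varphi_Y}-c(X,Y)$ and noting that the constant $c(X,Y)$ has vanishing Poisson bracket with anything, one obtains
\[
  c(\comm{X}{Y},Z)=\acomm{\acomm{\varphi_X}{\varphi_Y}}{\varphi_Z}-\varphi_{\comm{\comm{X}{Y}}{Z}}.
\]
Summing this cyclically over $(X,Y,Z)$, the first cyclic sum vanishes by the Jacobi identity for the Poisson bracket and the second by the Jacobi identity in $\fg$ together with linearity of $\varphi$, leaving $c(\comm{X}{Y},Z)+c(\comm{Y}{Z},X)+c(\comm{Z}{X},Y)=0$, which is the cocycle condition. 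The only genuine care needed anywhere is the bookkeeping of signs in the derivative of $\Ad^*$ and in the relation between $\eL_{\xi_X}\varphi_Y$ and $\acomm{\varphi_X}{\varphi_Y}$; I expect this to be the one place where it is easy to slip.
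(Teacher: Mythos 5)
Your proposal is correct and follows essentially the same route as the paper: differentiate the relation \(\pair{\theta(g)}{X} = \varphi_{\Ad_{g^{-1}}X}(p)-\varphi_X(g\cdot p)\) along a one-parameter subgroup to obtain \(c(X,Y)=\acomm{\varphi_X}{\varphi_Y}-\varphi_{\comm{X}{Y}}\), read off skew-symmetry from this expression, and deduce \(\partial_{\mathrm{CE}}c=0\) from the Jacobi identities for \(\fg\) and for the Poisson bracket. The only difference is that the paper merely cites those two Jacobi identities (offering Lemma~\ref{lemma:symp-cocycles} as an alternative), whereas you spell out the cyclic-sum computation, including the key observation that the constant \(c(X,Y)\) Poisson-commutes with everything; your sign bookkeeping is correct throughout.
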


\begin{proof}
	The first claim is immediate from the natural identifications \(\fg=T_eG\) and \(T_0\fg^*\cong\fg^*\). Now let \(X,Y\in\fg\) and \(p\) be an arbitrary point in \(M\); using equation~\eqref{eq:theta-phi-reln} gives
	\begin{equation}
	\begin{split}\label{eq:c-phi-rel}
		c(X,Y) = \pair{d_e\theta(X)}{Y}
		&= \dv{t}\qty(\varphi_{\Ad_{\exp(-tX)}Y}(p)-\varphi_Y(\exp(tX)p))\eval_{t=0}\\
	 	&= \qty(-\varphi_{\comm{X}{Y}}-\eL_{\xi_X}\varphi_Y)(p)\\
	 	&= \qty(-\varphi_{\comm{X}{Y}}+\acomm{\varphi_X}{\varphi_Y})(p).
	\end{split}
	\end{equation}
	Note that the LHS is independent of the point \(p\in M\), so \(\varphi_{\comm{X}{Y}}-\acomm{\varphi_X}{\varphi_Y}\) is constant on \(M\). Clearly this is skew-symmetric in \(X,Y\), hence \(c\in\Wedge^2\fg^*\). Finally, we must show that \(\partial_{\mathrm{CE}}c=0\). This follows from the expression for \(c\) above and the Jacobi identities for \(\fg\) and for the Poisson bracket on \(C^\infty(M)\); alternatively, it is a consequence of \(\theta\) being a symplectic cocycle (Lemma~\ref{lemma:symp-cocycles}).
\end{proof}

We now record a result linking the cohomology classes of the cocycles \(\theta\) and \(c\) to each other and to properties of the (co)moment map. This is a standard result, so we will not belabour the point.

\begin{proposition}\label{prop:hamiltonian-action}
	Since \(G\) is a connected Lie group, the following are equivalent:
	\begin{enumerate}
		\item The moment map \(\mu\) can be chosen to be equivariant;\label{item:mu-equi}
		\item The cohomology class of \(\theta\) is trivial;\label{item:theta-triv}
		\item The comoment map \(\varphi\) can be chosen to be a Lie algebra homomorphism;\label{item:phi-hom}
		\item The cohomology class of \(c\) is trivial.\label{item:c-triv}
	\end{enumerate}
	If any (and therefore all) of these conditions hold, the action of \(G\) on \(M\) is said to be \emph{hamiltonian}.
\end{proposition}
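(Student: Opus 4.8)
The plan is to establish the four equivalences by a short cycle, isolating the single place where connectedness of $G$ is needed. The starting observation is that the whole construction carries just one ambiguity: replacing $\varphi_X$ by $\varphi_X+\pair{\mu_0}{X}$ for some fixed $\mu_0\in\fg^*$, equivalently $\mu\mapsto\mu+\mu_0$. I would first compute the effect of this shift on the two cocycles. From the definition of $\theta$ one finds that it is shifted by the group coboundary $g\mapsto\Ad^*_g\mu_0-\mu_0$ of the $0$-cochain $\mu_0$; and from the formula $c(X,Y)=\acomm{\varphi_X}{\varphi_Y}-\varphi_{\comm{X}{Y}}$ of Lemma~\ref{lemma:Dtheta-symplectic-cocycle} (the Poisson bracket being insensitive to added constants) one finds that $c$ is shifted by the Chevalley--Eilenberg coboundary $(X,Y)\mapsto-\pair{\mu_0}{\comm{X}{Y}}$. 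These two shifts are compatible, the latter being the derivative at the identity of the former. Two of the equivalences now drop out at once: $\mu$ can be chosen equivariant, i.e.\ $\theta\equiv0$ after some shift, exactly when $\theta$ is a coboundary, giving (1)$\Leftrightarrow$(2); and since $\varphi$ is a homomorphism exactly when $c=0$, it can be chosen to be one exactly when $c$ is a coboundary, giving (3)$\Leftrightarrow$(4).

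It remains to tie the two pairs together, and for this I would show that, for one \emph{fixed} choice of moment map, $\mu$ is equivariant if and only if $\varphi$ is a homomorphism. Differentiating the equivariance identity $\mu(\exp(tX)\cdot p)=\Ad^*_{\exp(tX)}\mu(p)$ at $t=0$ and pairing with $Y$ yields the infinitesimal condition $\eL_{\xi_X}\varphi_Y=-\varphi_{\comm{X}{Y}}$, which via $\acomm{\varphi_X}{\varphi_Y}=-\eL_{\xi_X}\varphi_Y$ is exactly $\acomm{\varphi_X}{\varphi_Y}=\varphi_{\comm{X}{Y}}$, i.e.\ $\varphi$ being a homomorphism. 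The direction from equivariance to this infinitesimal statement is immediate; the converse requires integrating the infinitesimal identity back up to all of $G$, and it is here---and only here---that connectedness enters, since any $g\in G$ is a finite product of exponentials and the equivariance relation is closed under products in $g$. This gives (1)$\Leftrightarrow$(3), and together with the previous paragraph closes the cycle, so all four conditions are equivalent.

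The main obstacle is precisely this last integration step: passing from infinitesimal to global equivariance, which fails without connectedness and is the only substantive use of the hypothesis. Everything else is bookkeeping---confirming the transformation rules for $\theta$ and $c$ under a shift of the moment map and matching them to the group- and Chevalley--Eilenberg coboundary operators. As a by-product the cycle delivers the purely cohomological equivalence (2)$\Leftrightarrow$(4), that the class of the group cocycle $\theta$ vanishes iff that of its Lie-algebra cocycle $c$ does, which is the expected correspondence between the group and Lie-algebra cohomology classes for connected $G$.
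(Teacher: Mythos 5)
Your proposal is correct and takes essentially the same route as the paper's own proof: you establish (1)\(\iff\)(2) and (3)\(\iff\)(4) by tracking how the shift \(\mu\mapsto\mu+\mu_0\) moves \(\theta\) and \(c\) by the respective coboundaries, and then close the cycle with (1)\(\iff\)(3) by differentiating the equivariance identity and integrating back using connectedness. This matches the paper's decomposition exactly, including the observation that connectedness of \(G\) is used only in the direction from the Lie algebra homomorphism property of \(\varphi\) to global equivariance of \(\mu\).
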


\begin{proof}
	(\eqref{item:mu-equi}\(\iff\)\eqref{item:theta-triv}) If \(\mu\) is equivariant then clearly \(\theta=0\). Conversely, if \(\theta=\partial\mu_0\) for some \(\mu_0\in\fg\), for all \(g\in G\) we have \(\theta(g)=\Ad^*_g\mu_0-\mu_0\). But recall that \(\theta(g)=\vartheta(g,p)=\Ad^*_g\mu(p)-\mu(g\cdot p)\) for arbitrary \(p\in M\), so letting \(\mu'(p)=\mu(p)-\mu_0\), we have \(\Ad^*_{g}\mu'(p)=\mu'(g\cdot p)\).
		
	(\eqref{item:phi-hom}\(\iff\)\eqref{item:c-triv}) By \eqref{eq:c-phi-rel}, we have \(c(X,Y) = \acomm{\varphi_X}{\varphi_Y} -  \varphi_{\comm{X}{Y}}\) for all \(X,Y\in \fg\).	Clearly then, \(c=0\) if \(\varphi\) is a Lie algebra homomorphism. Conversely, if \(c=\partial_{\mathrm{CE}}\mu_0\), we have \(c(X,Y)=-\mu_0(\comm{X}{Y})\). Then setting \(\varphi'=\varphi-\mu_0\), for all \(X,Y\in\fg\), we have
	\begin{equation}
	\begin{split}
		\acomm{\varphi'_X}{\varphi'_Y} - \varphi'_{\comm{X}{Y}}
		&= \acomm{\varphi_X-\mu_0(X)}{\varphi_Y-\mu_0(Y)} - (\varphi_{\comm{X}{Y}} - \mu_0(\comm{X}{Y}))\\
		&= \acomm{\varphi_X}{\varphi_Y} - \varphi_{\comm{X}{Y}} + \mu_0(\comm{X}{Y})\\
		&= c(X,Y)+\mu_0(\comm{X}{Y})\\
		&= 0.
	\end{split}
	\end{equation}
	
	(\eqref{item:mu-equi}\(\implies\)\eqref{item:phi-hom}) Let \(p\in M\) and \(X\in\fg\). We have \(	\mu(\exp(tX)p)=\Ad^*_{\exp(tX)}\mu(p)\); pairing with \(Y\in\fg\) yields
	\begin{equation}
		\pair{\mu(\exp(tX)p)}{Y} = \pair{\mu(p)}{\Ad_{\exp(-tX)}Y}
	\end{equation}
	or equivalently
	\begin{equation}
		\varphi_Y(\exp(tX)p)=\varphi_{\Ad_{\exp(-tX)}Y}(p) = \varphi_{\exp(-t\ad_X)Y}(p).
	\end{equation}
	Differentiating this expression at \(t=0\) yields \(\eL_{\xi_X}\varphi_Y(p) = -\varphi_{\ad_XY}(p)\). But this is just
	\begin{equation}
		\acomm{\varphi_X}{\varphi_Y} = \varphi_{\comm{X}{Y}}
	\end{equation}
	after abstracting away \(p\) and eliminating a sign.
	
	(\eqref{item:phi-hom}\(\implies\)\eqref{item:mu-equi}) Since \(G\) is connected, it is sufficient to prove that \(\mu\) is infinitesimally equivariant. For all \(p\in M\) and \(X\in\fg\),
	\begin{equation}
		\pair{d_p\mu(\xi_X)}{Y}
		= d_p\varphi_Y(\xi_X)
		= -\acomm{\varphi_X}{\varphi_Y}(p)
		= -\varphi_{\comm{X}{Y}}(p)
		= -\pair{\mu(p)}{\comm{X}{Y}}
		= \pair{\ad^*_X\mu(p)}{Y},
	\end{equation}
	so \(d\mu(\xi_X)=\ad_X^*\mu\), which is the infinitesimal form of \(\mu(g\cdot p)=\Ad^*_g\mu(p)\).
\end{proof}

\begin{remark}
  The first three parts of the preceding proof do not
  require that \(G\) be connected. Indeed, the only place where we
  have used this hypothesis in any of the above discussion is in
  the fourth part of the proof above, and implicitly in the proof of
  Lemma~\eqref{lemma:symp-theta-cocycle}, although that only required
  the weaker assumption that \(M\) is connected.
  It is already clear, then, that the symplectic cocycle
  \(\theta\) captures important information about (co)moment maps in a
  more general setting, although there is more nuance if \(G\) is not
  connected. We will however require that \(G\) is connected in later 
  sections, and we will also need to assume that \(M\) is simply-connected.
  We also note that in the case where \(G\) is connected, instead of proving
  that \eqref{item:mu-equi}\(\iff\)\eqref{item:phi-hom} as above, we could
  instead invoke a general result about symplectic group cocycles
  (Proposition~\ref{prop:symp-cocycles-triv}) to show that
  \eqref{item:theta-triv}\(\iff\)\eqref{item:c-triv}.
\end{remark}

If the $G$ action on a symplectic manifold $(M,\omega)$ is both 
hamiltonian with moment map $\mu$ and transitive, we say that
$(M,\omega,\mu)$ is a \emph{hamiltonian $G$-space}. Hamiltonian
$G$-spaces form a category whose morphisms 
$(M,\omega,\mu) \to (M',\omega',\mu')$ consist of smooth maps 
$\phi: M \to M'$ such that $\phi^*\omega'= \omega$ and
$\phi^* \mu' = \mu$.  The paradigmatic examples of hamiltonian
$G$-spaces are the coadjoint orbits.  Let $\bigO \subset \g^*$ be a
coadjoint orbit with the Kirillov--Kostant--Souriau symplectic structure
$\omega_{\mathrm{KKS}}$. The coadjoint action of $G$ on $\bigO$ is
hamiltonian with moment map $i: \bigO \to \g^*$ the inclusion, making
$(\bigO,\omega_{\mathrm{KKS}}, i)$ into a hamiltonian $G$-space.

We record for later use the following result of Kostant's
\cite[Prop.~5.1.1]{MR0294568}.

\begin{theorem}[Kostant \cite{MR0294568}]\label{thm:kostant-covering}
  Let $\phi: M \to M'$ be a morphism of hamiltonian $G$-spaces
  $(M,\omega,\mu)$ and $(M',\omega',\mu')$, with $G$ a connected Lie
  group.  Then $\phi$ is $G$-equivariant and a covering map.
\end{theorem}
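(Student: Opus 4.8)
The plan is to establish, in order, that $\phi$ is an immersion, that it is $G$-equivariant, that it is in fact a local diffeomorphism, and finally that it is a covering map. Throughout I write $\xi_X,\xi'_X$ for the fundamental vector fields of $X\in\fg$ on $M$ and $M'$, and $\varphi_X=\pair{\mu}{X}$, $\varphi'_X=\pair{\mu'}{X}$ for the components of the two moment maps. The immersion property is immediate from $\phi^*\omega'=\omega$: if $d\phi_p v=0$ then $\omega_p(v,-)=(\phi^*\omega')_p(v,-)=0$, so $v=0$ by nondegeneracy of $\omega$, and $d\phi_p$ is injective at every $p$.

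For the infinitesimal comparison of the two actions, the condition $\phi^*\mu'=\mu$ reads $\phi^*\varphi'_X=\varphi_X$. Differentiating and using $\imath_{\xi_X}\omega=d\varphi_X$, $\imath_{\xi'_X}\omega'=d\varphi'_X$ together with $\phi^*\omega'=\omega$, a short computation evaluating both sides on $v\in T_pM$ yields
\begin{equation*}
  \omega'_{\phi(p)}\big((\xi'_X)_{\phi(p)} - d\phi_p(\xi_X)_p,\; d\phi_p v\big) = 0 \qquad\text{for all } v\in T_pM,
\end{equation*}
so $(\xi'_X)_{\phi(p)}-d\phi_p(\xi_X)_p$ lies in the $\omega'$-orthogonal complement of $\image d\phi_p$. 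This alone does not force $\xi_X$ and $\xi'_X$ to be $\phi$-related; to annihilate the orthogonal complement I need $\phi$ to be a submersion, which is a matter of dimension.

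This dimension count is the key step, and the one I expect to be the main obstacle. Since the action is hamiltonian and transitive, $\mu$ is equivariant, so its image is the single coadjoint orbit $\bigO$ through $\mu(p)$; likewise $\mu'(M')=\bigO'$. From $\mu=\mu'\circ\phi$ we get $\bigO\subseteq\bigO'$, and since distinct coadjoint orbits are disjoint this forces $\bigO=\bigO'$. Next, using that $\varphi$ is a Lie algebra homomorphism in the hamiltonian case, I compute $\omega_p(\xi_X,\xi_Y)=\acomm{\varphi_X}{\varphi_Y}(p)=\varphi_{\comm{X}{Y}}(p)=\pair{\mu(p)}{\comm{X}{Y}}=B_{\mu(p)}(X,Y)$. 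As the $(\xi_X)_p$ span $T_pM$ and $\omega_p$ is nondegenerate, the stabiliser subalgebra $\fg_p=\{X:(\xi_X)_p=0\}$ must equal the radical of $B_{\mu(p)}$, which by the discussion in the introduction is $\fg_{\mu(p)}$. Hence $\dim M=\dim\fg-\dim\fg_p=\dim\bigO$, and identically $\dim M'=\dim\bigO'$; with $\bigO=\bigO'$ this gives $\dim M=\dim M'$. Combined with the immersion property, $\phi$ is a local diffeomorphism, $\image d\phi_p=T_{\phi(p)}M'$, the orthogonal complement above vanishes, and $\xi_X$ is $\phi$-related to $\xi'_X$ for every $X\in\fg$.

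It remains to integrate and conclude. Since $\xi_X$ and $\xi'_X$ are $\phi$-related their flows intertwine, giving $\phi(\exp(tX)\cdot p)=\exp(tX)\cdot\phi(p)$, and because $G$ is connected and generated by such exponentials this upgrades to $\phi(g\cdot p)=g\cdot\phi(p)$, i.e. $G$-equivariance. Equivariance makes $\image\phi$ a nonempty $G$-invariant subset, hence all of $M'$ by transitivity, so $\phi$ is surjective. For the covering property I would pass to the homogeneous-space description: writing $H=G_p$ and $H'=G_{\phi(p)}$, equivariance gives $H\subseteq H'$ and identifies $\phi$ with the natural projection $G/H\to G/H'$; since $\phi$ is a local diffeomorphism the fibre $H'/H$ is discrete, so this projection is a fibre bundle with discrete fibre, that is, a covering map.
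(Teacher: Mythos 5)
Your proposal is correct, but note that the paper itself offers no proof of this statement: it is quoted as Theorem~\ref{thm:kostant-covering} directly from Kostant \cite[Prop.~5.1.1]{MR0294568}, so the comparison here is with Kostant's original argument rather than with anything in the text. Your argument is a sound, self-contained reconstruction, and its two crucial steps are exactly the ones that make the theorem nontrivial. First, the dimension count: using the homomorphism property $\acomm{\varphi_X}{\varphi_Y}=\varphi_{\comm{X}{Y}}$ (valid since, in the paper's conventions, the moment map of a hamiltonian $G$-space is the equivariant one, and $G$ is connected) you identify $\omega_p(\xi_X,\xi_Y)=B_{\mu(p)}(X,Y)$, hence $\fg_p=\operatorname{rad}B_{\mu(p)}=\fg_{\mu(p)}$ and $\dim M=\dim\bigO$; together with $\bigO=\bigO'$ (forced by $\mu=\mu'\circ\phi$ and the fact that coadjoint orbits partition $\fg^*$) this upgrades the immersion to a local diffeomorphism, which is what lets you kill the $\omega'$-orthogonal complement and get $\phi$-relatedness of the fundamental vector fields, hence equivariance. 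Second, the covering step: you correctly avoid the standard trap that a surjective local diffeomorphism need not be a covering map, by passing to the homogeneous model $G/H\to G/H'$ with $H=G_p\subseteq H'=G_{\phi(p)}$ closed subgroups; this projection is a fibre bundle with fibre $H'/H$, which is discrete because $\phi$ is a local diffeomorphism, and a fibre bundle with discrete fibre is a covering. The only conventions you lean on — that the $\mu$ in a hamiltonian $G$-space is equivariant, and that $M\cong G/G_p$ as smooth manifolds — are both consistent with the paper (the former is used explicitly in the proof of Proposition~\ref{prop:moment-map-is-morphism}), so there is no gap.
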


Let $(M,\omega,\mu)$ be a hamiltonian $G$-space with $G$ connected, so
that $M$ too is connected.  Then the image of the moment map is a
coadjoint orbit $\bigO \subset \g^*$ and we may restrict the codomain
so that $\mu : M \to \bigO$.

\begin{proposition}\label{prop:moment-map-is-morphism}
  The moment map $\mu : M \to \bigO$ defines a morphism of hamiltonian
  $G$-spaces $(M,\omega,\mu)$ and $(\bigO,\omega_{\mathrm{KKS}},i)$.
\end{proposition}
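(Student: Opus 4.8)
The plan is to verify directly the two defining conditions of a morphism of hamiltonian $G$-spaces, namely that $\mu^* i = \mu$ and that $\mu^*\omega_{\mathrm{KKS}} = \omega$. The first is immediate: since $i : \bigO \to \fg^*$ is the inclusion and we have restricted the codomain of $\mu$ to $\bigO$, we have $\mu^* i = i \circ \mu = \mu$ on the nose. All the content therefore lies in the symplectic condition $\mu^*\omega_{\mathrm{KKS}} = \omega$.

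To establish this, I would first exploit the hamiltonian hypothesis. By Proposition~\ref{prop:hamiltonian-action} we may take $\mu$ to be equivariant, and then the comoment map $\varphi$ is a Lie algebra homomorphism, $\acomm{\varphi_X}{\varphi_Y} = \varphi_{\comm{X}{Y}}$. Differentiating the equivariance relation $\mu(g\cdot p) = \Ad^*_g\mu(p)$ along $g = \exp(tX)$ yields $d_p\mu(\xi_X) = \ad^*_X\mu(p)$, exactly as already recorded in the proof of Proposition~\ref{prop:hamiltonian-action}; that is, $\mu$ carries the fundamental vector fields $\xi_X$ on $M$ to those of the coadjoint action on $\bigO$, whose value at $\alpha$ is $\ad^*_X\alpha$. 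Since the $G$-action is transitive, the $\xi_X$ span $T_pM$ at every $p\in M$, so a $2$-form on $M$ is determined by its values on the pairs $(\xi_X,\xi_Y)$, and it suffices to compare both sides on such pairs.

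The comparison is then a short computation. Recalling from the introduction that the value of $\omega_{\mathrm{KKS}}$ at $\alpha$ is $\omega_{\mathrm{KKS}}(\ad^*_X\alpha,\ad^*_Y\alpha) = \pair{\alpha}{\comm{X}{Y}}$, and using $d_p\mu(\xi_X) = \ad^*_X\mu(p)$, I obtain
\begin{equation}
  (\mu^*\omega_{\mathrm{KKS}})_p(\xi_X,\xi_Y)
  = \omega_{\mathrm{KKS}}\big(\ad^*_X\mu(p),\ad^*_Y\mu(p)\big)
  = \pair{\mu(p)}{\comm{X}{Y}}
  = \varphi_{\comm{X}{Y}}(p).
\end{equation}
On the other hand, $\omega_p(\xi_X,\xi_Y) = \acomm{\varphi_X}{\varphi_Y}(p)$, which equals $\varphi_{\comm{X}{Y}}(p)$ precisely because $\varphi$ is a homomorphism. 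Hence the two $2$-forms agree on a spanning set at each point, so $\mu^*\omega_{\mathrm{KKS}} = \omega$, completing the verification.

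I do not anticipate a genuine obstacle: the proposition is essentially a bookkeeping assembly of facts already in hand. The one point requiring care is the consistency of sign conventions, namely that the sign in $d_p\mu(\xi_X) = \ad^*_X\mu(p)$ is compatible with the normalisation of $\omega_{\mathrm{KKS}}$ and with the Poisson-bracket identity $\omega(\xi_X,\xi_Y) = \acomm{\varphi_X}{\varphi_Y}$, and that the single choice of $\mu$ rendering it equivariant is the same choice rendering $\varphi$ a homomorphism — which is guaranteed by the direct equivalence \eqref{item:mu-equi}\(\iff\)\eqref{item:phi-hom} of Proposition~\ref{prop:hamiltonian-action}. With those alignments in place the computation closes.
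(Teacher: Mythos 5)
Your proposal is correct and is essentially the paper's own argument: both proofs reduce to comparing \(\mu^*\omega_{\mathrm{KKS}}\) and \(\omega\) on pairs of fundamental vector fields (which span each tangent space by transitivity), use equivariance of \(\mu\) to identify \(\mu_*\xi_X\) with the fundamental vector field of the coadjoint action, and find that both sides equal \(\pair{\mu(p)}{\comm{X}{Y}}\) via the KKS formula on one side and the homomorphism property of \(\varphi\) on the other. The only cosmetic difference is that the paper fixes a single point \(o\) and invokes \(G\)-invariance of both forms (transporting vectors through the orbit-map triangle and the chain rule), whereas you verify the identity at every point directly using infinitesimal equivariance \(d_p\mu(\xi_X)=\ad^*_X\mu(p)\).
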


\begin{proof}
  We need only show that $\mu^*\omega_{\mathrm{KKS}} = \omega$.  Let $o
  \in M$ with $\mu(o) = \lambda \in \g^*$ and let $\bigO$ be the
  coadjoint orbit of $\lambda$.  Consider the following triangle
  \begin{equation}\label{eq:M-G-O-mu-triangle}
    \begin{tikzcd}
      & G \arrow[dl,"a_o"'] \arrow[dr,"a_\lambda"] &\\
      M \arrow[rr,"\mu"] & & \bigO
    \end{tikzcd}
  \end{equation}
  where $a_o : G \to M$, sending $g \mapsto g \cdot o$, and
  $a_\lambda : G \to \bigO$, sending $g \mapsto \Ad_g^*
  \lambda$, are the orbit maps.  All maps are surjections and, since
  $\mu$ is $G$-equivariant, the triangle commutes; that is, $\mu \circ
  a_o = a_\lambda$.

  Since both $\omega_{\mathrm{KKS}}$ and $\omega$ are $G$-invariant, it
  is enough to show that $\mu^*\omega_{\mathrm{KKS}}$ and $\omega$ agree
  at $o \in M$. For every $X \in \g$, we let $\zeta_X$ be the fundmental
  vector field on \(\bigO\); we then have $\left( \xi_X  \right)_o =
  (a_o)_* X$ and $\left( \zeta_X \right)_\lambda = (a_\lambda)_* X$. The
  images of $\g$ under these maps span $T_oM$ and $T_\lambda\bigO$,
  respectively, since the orbit maps are surjections.  Hence it is
  enough to show that $\mu^*\omega_{\mathrm{KKS}}$ and $\omega$ agree on
  such tangent vectors.  Also, by the chain rule and the commutativity
  of the diagram,
  \begin{equation}
    \mu_* \left(\xi_X\right)_o = (\mu \circ a_o)_* X = (a_\lambda)_* X = \left(\zeta_X\right)_\lambda.
  \end{equation}
  Therefore on the one hand, we have
  \begin{equation}
    (\mu^*\omega_{\mathrm{KKS}})(\xi_X,\xi_Y)(o) = \omega_{\mathrm{KKS}}(\mu_*\xi_X,\mu_*\xi_Y)(\lambda)\\
                                              = \omega_{\mathrm{KKS}}(\zeta_X,\zeta_Y)(\lambda)\\
                                              = \left<\lambda,[X,Y]\right>,
  \end{equation}
  and on the other hand,
  \begin{equation}
    \omega(\xi_X,\xi_Y)(o) = \left\{ \varphi_X,\varphi_Y \right\}(o)\\
                         = \varphi_{[X,Y]}(o)\\
                         = \left<\mu(o),[X,Y]\right>\\
                         = \left<\lambda,[X,Y]\right>.
  \end{equation}
\end{proof}

As an immediate corollary of this result and Kostant's
Theorem~\ref{thm:kostant-covering}, we have the following.

\begin{corollary}\label{cor:covering}
  Let $G$ be a connected Lie group and $(M,\omega,\mu)$ a
  simply-connected hamiltonian $G$-space.  Then $M$ is (symplectically
  as well as topologically) the universal cover of a coadjoint orbit
  of $G$.
\end{corollary}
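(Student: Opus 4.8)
The plan is to recognise that this statement is a direct synthesis of the two results immediately preceding it, so the argument should be a short deduction rather than a fresh computation. Since $(M,\omega,\mu)$ is a hamiltonian $G$-space with $G$ connected, $M$ is connected and the image $\mu(M) \subset \g^*$ is a single coadjoint orbit $\bigO$; corestricting gives a surjection $\mu : M \to \bigO$. The first step is to invoke Proposition~\ref{prop:moment-map-is-morphism}, which tells us precisely that this corestricted $\mu$ is a morphism of hamiltonian $G$-spaces from $(M,\omega,\mu)$ to $(\bigO,\omega_{\mathrm{KKS}},i)$, its defining properties being $\mu^* i = \mu$ and, crucially, $\mu^*\omega_{\mathrm{KKS}} = \omega$.

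The second step is to feed this morphism into Kostant's Theorem~\ref{thm:kostant-covering}, whose hypothesis that $G$ be connected is exactly what we have assumed. That theorem asserts that any morphism of hamiltonian $G$-spaces is automatically $G$-equivariant and a covering map, so $\mu : M \to \bigO$ is a covering. Because $\bigO = G \cdot \lambda$ is the continuous image of the connected group $G$ it is connected, and because $M$ is assumed simply-connected and covers $\bigO$, it is by definition the universal cover of $\bigO$. This settles the topological part of the statement.

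The parenthetical ``symplectically as well as topologically'' clause then requires no additional work, which is what makes this a genuine corollary: the covering map $\mu$ already satisfies $\mu^*\omega_{\mathrm{KKS}} = \omega$ from the morphism property used above, so $\mu$ identifies $\omega$ with the pullback of $\omega_{\mathrm{KKS}}$ and the deck group of the covering acts by symplectomorphisms. I do not expect any real obstacle here, since all the substantive content has already been isolated in Proposition~\ref{prop:moment-map-is-morphism} and Theorem~\ref{thm:kostant-covering}; the only point meriting a moment's care is the bookkeeping that the codomain onto which we corestrict is the correct connected coadjoint orbit, which is immediate once one notes that $\mu$ is equivariant and $G$ is connected.
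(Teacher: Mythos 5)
Your proposal is correct and is exactly the paper's argument: the paper presents this corollary as an immediate consequence of Proposition~\ref{prop:moment-map-is-morphism} (the corestricted moment map is a morphism of hamiltonian $G$-spaces onto a coadjoint orbit) combined with Kostant's Theorem~\ref{thm:kostant-covering} (such a morphism is an equivariant covering), with the symplectic identification coming for free from $\mu^*\omega_{\mathrm{KKS}}=\omega$. No gaps; your spelled-out version matches the paper's intended deduction step for step.
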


\begin{remark}
  Another immediate corollary of
  Proposition~\ref{prop:moment-map-is-morphism} is that
  \begin{equation}
    \label{eq:presymp-G-agree}
    a_o^* \omega = a_o^* \mu^* \omega_{\mathrm{KKS}}= (\mu
    \circ a_o)^* \omega_{\mathrm{KKS}} = a_\lambda^*
    \omega_{\mathrm{KKS}}.
  \end{equation}
  So that the two left-invariant $2$-forms on $G$ resulting by pulling
  back the symplectic forms on $M$ and $\bigO$ via the orbit maps
  coincide.
\end{remark}

\begin{remark}
  In the general case, where the action is not necessarily
  hamiltonian, we can define an affine (not linear) action
  \(\rho:G\to\Aff(\fg^*)\) by
  \begin{equation}\label{eq:theta-aff-act}
    \rho(g)\alpha = \Ad^*_g\alpha - \theta(g).
  \end{equation}
  for \(g\in G\), \(\alpha\in \fg^*\). Indeed,
  the cocycle condition \eqref{eq:symp-theta-cocycle} is equivalent to
  \(g_1\cdot(g_2\cdot\alpha)=(g_1g_2)\cdot\alpha\). By the definition
  of \(\theta\), the moment map \(\mu\) is equivariant with respect to
  this action:
  \begin{equation}
    \mu(g\cdot p) = \rho(g) \mu(p).
  \end{equation}
  In Section~\ref{sec:lie-alg-ext}, we will show that this affine
  action can be viewed as the restriction to a hyperplane of a linear
  action of $G$ on the dual $\widehat\g^*$ of a central extension
  \(\widehat\fg\) of \(\fg\).  In
  Section~\ref{sec:interpretation-affine-G-action}, we will show that there
  is a $G$-invariant symplectic structure on the affine orbits and
  hence we will be able to rephrase our main result
  (Theorem~\ref{thm:homo-sym-ext}) as a version of
  Corollary~\ref{cor:covering} for $(M,\omega)$ homogeneous symplectic
  $G$-space (but not necessarily hamiltonian) with affine orbits
  replacing the linear coadjoint orbits in the conclusion.
\end{remark}

\subsection{Extending the Lie algebra}
\label{sec:lie-alg-ext}

The cocycle \(c\in\Wedge^2 g^*\) determines a one-dimensional central extension \(\widehat\fg\) of \(\fg\): as a vector space, we have \(\widehat\fg = \fg\oplus\RR\), and the Lie bracket is given by
\begin{equation}
	\comm{(X,u)}{(Y,v)} = (\comm{X}{Y},c(X,Y))
\end{equation}
where \(X,Y\in\fg\), \(u,v\in\RR\). Note that this extension is
trivial if and only if the action of \(G\) on \(M\) is
hamiltonian. Even if it is not hamiltonian, and hence the comoment map
\(\varphi\) is not a Lie algebra homomorphism, it can be
extended to a Lie algebra homomorphism
\(\widehat\varphi:\widehat\fg\to C^\infty(M)\) defined as follows:
\begin{equation}\label{eq:hat-phi}
	\widehat\varphi_{(X,u)}(p) = \varphi_X(p) + u
\end{equation}
for \(p\in M\), \((X,u)\in\widehat\fg\). Indeed, suppressing \(p\) in the notation,
\begin{equation}\label{eq:hat-phi-hom}
	\widehat\varphi_{\comm{(X,u)}{(Y,v)}} = \widehat\varphi_{(\comm{X}{Y},c(X,Y))} = \varphi_{\comm{X}{Y}} + c(X,Y) = \acomm{\varphi_X}{\varphi_Y} = \acomm{\widehat\varphi_{(X,u)}}{\widehat\varphi_{(Y,v)}},
\end{equation}
where the last equality follows because \(u,v\) are constants. Dually, we define an extended moment map \(\widehat\mu:M\to\widehat\fg^*\) by
\begin{equation}\label{eq:hat-mu}
	\pair{\widehat\mu(p)}{(X,u)} = \widehat\varphi_{(X,u)}(p) = \pair{\mu(p)}{X} + u
\end{equation}
where we have used \(\pair{}{}\) to denote the dual pairing on
\(\widehat\fg\) as well as \(\fg\).  Note that
$\widehat\mu(p) = (\mu(p),1) \in \g \oplus \RR$.  Let us define a
linear action of \(G\) on \(\widehat\fg\) by
\begin{equation}\label{eq:hat-g-theta-action}
	g\cdot(X,u) = \qty(\Ad_g X, u-\pair{\theta(g^{-1})}{X})
\end{equation}
where \(\theta\) is the group cocycle appearing in Lemma~\ref{lemma:symp-theta-cocycle}. One can check that the inverse in the argument of \(\theta\) is necessary to make this an action, and it is clearly linear. We have a dual action on \(\widehat\fg^*\cong \fg^*\oplus\RR\) defined by
\begin{equation}
  \pair{g\cdot(\alpha,\zeta)}{(X,u)} = \pair{(\alpha,\zeta)}{g^{-1}\cdot(X,u)} 
\end{equation}
for \((X,u)\in\widehat\fg\), \((\alpha,\zeta)\in\widehat\fg^*\) and \(g\in G\). A simple computation then shows that
\begin{equation}\label{eq:hat-g*-theta-action}
  g\cdot(\alpha,\zeta) = (\Ad^*_g\alpha - \zeta\theta(g),\zeta).
\end{equation}
The extended moment map is equivariant with respect to this action; indeed, we have
\begin{equation}\label{eq:hat-mu-equi}
\begin{split}
  g\cdot \widehat\mu(p) = g\cdot(\mu(p),1) = (\Ad^*_g\mu(p) - \theta(g),1) = (\mu(g\cdot p),1) = \widehat\mu(g\cdot p).
\end{split}
\end{equation}

\begin{remark}
  The action \eqref{eq:hat-g*-theta-action} preserves the hyperplanes
  \(\zeta=\text{constant}\); in particular, the action on the
  \(\zeta=1\) hyperplane is \(g\cdot(\alpha,1)=(\rho(g)\alpha,1)\)
  where \(\rho\) is the affine action defined by
  \eqref{eq:theta-aff-act}. We thus have an equivariant embedding of
  \(\fg^*\) thought of as an affine \(G\)-space into
  \(\widehat\fg^*\). The image of \(\widehat\mu\) lies in this
  hyperplane and corresponds to the image of \(\mu\) in \(\fg^*\).
\end{remark}

One can straightforwardly check that the derivative of the action
\eqref{eq:hat-g-theta-action} is the adjoint representation of
\(\widehat\fg\) restricted to \(\fg\). One might then wonder whether
the central extension \(\widehat\fg\) of \(\fg\) can be ``integrated''
to a central extension \(\widehat{G}\) of \(G\) in such a way
that the actions of \(G\) on \(\widehat\fg\) and its dual described
above arise naturally as restrictions of the adjoint and coadjoint
actions of \(\widehat{G}\). We will show that this is the case.

\section{Symplectic cocycles}
\label{sec:symp-cocycle}

Earlier we saw a moment map for a symplectic action giving rise to a
group cocycle \(\theta\) with values in the coadjoint representation
which could be differentiated and curried to produce a
Chevalley--Eilenberg cocycle \(c\). We will set aside for now the
symplectic action and discuss some properties of such cocycles
abstractly.

\subsection{Differentiating group 1-cocycles}
\label{sec:diff-cocycles}

We begin by deriving a useful identity. Let \(\theta:G\to \fg^*\) be a
group 1-cocycle with values in the coadjoint representation; that is,
a smooth map which satisfies
\begin{equation}\label{eq:theta-cocyle-assump}
  \theta(g_1g_2) = \Ad^*_{g_1}\theta(g_2) + \theta(g_1).
\end{equation}
For \(X\in\fg\), \(g\in G\) and \(t\in \RR\), the cocycle condition
gives us two different expressions for \(\theta(\exp(tX)g)\):
\begin{gather}
  \theta(\exp(tX)g) = \Ad_{\exp(tX)}^*\theta(g) + \theta(\exp(tX)),\\
  \theta(\exp(tX)g) = \theta\qty(g\exp(t\Ad_{g^{-1}}X)) = \Ad_g^*\theta\qty(\exp(t\Ad_{g^{-1}}X)) + \theta(g);
\end{gather}
so for \(Y\in\fg\), we have
\begin{equation}
  \pair{\theta\qty(\exp(t\Ad_{g^{-1}}X))}{\Ad_{g^{-1}}Y} + \pair{\theta(g)}{Y} - \pair{\theta(g)}{\Ad_{\exp(-tX)}Y} - \pair{\theta(\exp(tX))}{Y} = 0,
\end{equation}
and differentiating this with respect to \(t\) at \(t=0\) gives
\begin{equation}\label{eq:theta-comm-ident}
  \pair{\theta(g)}{\comm{X}{Y}} = \pair{d_e\theta(X)}{Y} - \pair{d_e\theta(\Ad_{g^{-1}}X)}{\Ad_{g^{-1}}Y}.
\end{equation}

\begin{lemma}\label{lemma:Dtheta-cocycle}
  If \(\theta\) is a cocycle in \(C^1(G;\fg^*)\) then \(d_e\theta\) is a cocycle in \(C^1(\fg;\fg^*)\).
\end{lemma}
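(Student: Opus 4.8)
The plan is to exploit the identity \eqref{eq:theta-comm-ident}, which already encodes the full group cocycle condition, and to differentiate it a second time---now in the group variable $g$---in order to extract the linear relation characterising a Chevalley--Eilenberg $1$-cocycle. Concretely, for $Z\in\fg$ I would set $g=\exp(sZ)$ in \eqref{eq:theta-comm-ident} and differentiate at $s=0$. The left-hand side $\pair{\theta(\exp(sZ))}{\comm{X}{Y}}$ differentiates to $\pair{d_e\theta(Z)}{\comm{X}{Y}}$ by the very definition of $d_e\theta$. On the right-hand side the term $\pair{d_e\theta(X)}{Y}$ is constant in $s$ and drops out, while in the remaining term I would use $\Ad_{\exp(-sZ)}=\exp(-s\,\ad_Z)$, so that $\dv{s}\Ad_{\exp(-sZ)}W\eval_{s=0}=-\comm{Z}{W}$ for any $W\in\fg$. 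Applying the product rule to the bilinear pairing and invoking the linearity of $d_e\theta$ yields the identity
\begin{equation}\label{eq:Dtheta-star}
  \pair{d_e\theta(Z)}{\comm{X}{Y}} = \pair{d_e\theta(\comm{Z}{X})}{Y} + \pair{d_e\theta(X)}{\comm{Z}{Y}},
\end{equation}
valid for all $X,Y,Z\in\fg$.

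It then remains to recognise \eqref{eq:Dtheta-star} as the vanishing of the Chevalley--Eilenberg differential of $d_e\theta$, viewed as an element of $C^1(\fg;\fg^*)$ with $\fg^*$ carrying the coadjoint module structure. Writing $\psi=d_e\theta$, the differential is $(\partial\psi)(X,Y)=\ad^*_X\psi(Y)-\ad^*_Y\psi(X)-\psi(\comm{X}{Y})$, and pairing with $Z$ and using the convention $\pair{\ad^*_X\alpha}{Z}=-\pair{\alpha}{\comm{X}{Z}}$ gives
\begin{equation}\label{eq:Dtheta-CE}
  \pair{(\partial\psi)(X,Y)}{Z} = \pair{\psi(X)}{\comm{Y}{Z}} - \pair{\psi(Y)}{\comm{X}{Z}} - \pair{\psi(\comm{X}{Y})}{Z}.
\end{equation}
The cyclic relabelling $(Z,X,Y)\mapsto(X,Y,Z)$ applied to \eqref{eq:Dtheta-star} turns it into precisely the statement that the right-hand side of \eqref{eq:Dtheta-CE} vanishes, whence $\partial\psi=0$ and $d_e\theta$ is a Lie algebra $1$-cocycle.

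The computation is essentially routine once \eqref{eq:theta-comm-ident} is in hand, so I do not expect a genuine obstacle; the only point demanding care is the sign bookkeeping. Specifically, the minus sign produced by differentiating $\Ad_{\exp(-sZ)}$ must be tracked through its interplay with the sign convention in the coadjoint action, $\pair{\ad^*_X\alpha}{Z}=-\pair{\alpha}{\comm{X}{Z}}$, and with the sign in the Chevalley--Eilenberg differential. Matching these correctly is exactly what makes the final relabelling land on $(\partial\psi)(X,Y)=0$ rather than on a superficially similar expression with the wrong signs.
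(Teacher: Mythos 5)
Your proposal is correct and follows essentially the same route as the paper: setting $g=\exp(sZ)$ in \eqref{eq:theta-comm-ident}, differentiating at $s=0$ with the product rule to obtain the identity $\pair{d_e\theta(Z)}{\comm{X}{Y}} = \pair{d_e\theta(\comm{Z}{X})}{Y} + \pair{d_e\theta(X)}{\comm{Z}{Y}}$, and then recognising this as the Chevalley--Eilenberg cocycle condition for $d_e\theta$ in the coadjoint module. The only cosmetic difference is that the paper abstracts $Y$ and rearranges into $\ad_Z^*(d_e\theta(X)) - \ad_X^*(d_e\theta(Z)) - d_e\theta(\comm{Z}{X}) = 0$, whereas you pair the differential with $Z$ and relabel cyclically; the sign bookkeeping in both is identical.
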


\begin{proof}
  We set \(g=\exp(tZ)\) in equation \eqref{eq:theta-comm-ident} and differentiate with respect to \(t\) at \(t=0\) to find
  \begin{equation}\label{eq:Dtheta-cocycle-calc}
    \begin{split}
      \pair{d_e\theta(Z)}{\comm{X}{Y}} 
      &= - \dv{t}\pair{d_e\theta(\Ad_{\exp(-tZ)}X)}{\Ad_{\exp(-tZ)}Y}\eval_{t=0}\\
      &= - \dv{t}\pair{d_e\theta(\Ad_{\exp(-tZ)}X)}{Y} \eval_{t=0} 
      - \dv{t}\pair{d_e\theta(X)}{\Ad_{\exp(-tZ)}Y}\eval_{t=0}\\
      &= \pair{d_e\theta(\comm{Z}{X})}{Y} + \pair{d_e\theta(X)}{\comm{Z}{Y}}.
    \end{split}
  \end{equation}
  We then abstract \(Y\) and rearrange to find
  \begin{equation}
    \ad_Z^*(d_e\theta(X)) - \ad_X^*(d_e\theta(Z)) - d_e\theta(\comm{Z}{X}) = 0,
  \end{equation}
  hence the claim.
\end{proof}

\subsection{Symplectic cocycles and symplectic group cocycles}

A cochain \(\phi\in C^1(\fg;\fg^*)\) is called \emph{symplectic} if \(\pair{\phi(X)}{Y}=-\pair{\phi(Y)}{X}\). There is a one-to-one correspondence between the space of symplectic cochains \(C_{\mathrm{symp}}^1(\fg;\fg^*)\) and \(C^2(\fg)\) given by \(\phi\mapsto\widetilde\phi\) where \(\widetilde\phi(X,Y):=\pair{\phi(X)}{Y}\). Furthermore, \(\phi\) is a cocycle if and only if \(\widetilde\phi\) is, and all coboundaries in \(C^1(\fg;\fg^*)\) are symplectic, so we can form the symplectic cohomology \(H_{\mathrm{symp}}^1(\fg;\fg^*)\), which is isomorphic to \(H^2(\fg)\). In light of Lemma~\ref{lemma:Dtheta-cocycle}, we say that a cocycle \(\theta\in C^1(G;\fg^*)\) is a \emph {symplectic (group) cocycle} if \(d_e\theta\in C^1(\fg;\fg^*)\) is a symplectic cocycle; we will denote the space of such cocycles by \(Z^1_{\mathrm{symp}}(G;\fg^*)\). We already saw in Lemma~\ref{lemma:Dtheta-symplectic-cocycle} that moment maps for symplectic actions give rise to such cocycles.

\begin{lemma}\label{lemma:symp-cocycles}
	Suppose that \(\theta\) is a symplectic group cocycle. Then the bilinear form \(c\in C^2(\fg)=\Wedge^2 \fg^*\) defined by \(c(X,Y):=\pair{d_e\theta(X)}{Y}\) is the cocycle corresponding to \(d_e\theta\) under the isomorphism \(C^1_{\mathrm{symp}}(\fg;\fg^*)\cong C^2(\fg)\), and furthermore we have
	\begin{equation}
		\pair{\theta(g)}{\comm{X}{Y}} = c(X,Y) - (\Ad_g^*c)(X,Y)
	\end{equation}
	for all \(g\in G\) and \(X,Y\in\fg\), where \((\Ad_g^*c)(X,Y):=c(\Ad^*_{g^{-1}}X,\Ad^*_{g^{-1}}Y)\).
\end{lemma}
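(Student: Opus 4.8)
The plan is to reduce both assertions to facts already in hand, since the substantive work was done in deriving equation~\eqref{eq:theta-comm-ident}; what remains is essentially bookkeeping.

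For the first assertion, I would unwind the isomorphism $C^1_{\mathrm{symp}}(\fg;\fg^*)\cong C^2(\fg)$ recalled just above, which sends a symplectic cochain $\phi$ to $\widetilde\phi(X,Y)=\pair{\phi(X)}{Y}$. Since $\theta$ is a symplectic group cocycle, $d_e\theta\in C^1(\fg;\fg^*)$ is a cocycle by Lemma~\ref{lemma:Dtheta-cocycle} and is symplectic by the very definition of a symplectic group cocycle; hence $d_e\theta\in C^1_{\mathrm{symp}}(\fg;\fg^*)$. The defining formula $c(X,Y)=\pair{d_e\theta(X)}{Y}$ then reads literally as $c=\widetilde{d_e\theta}$, so $c$ is precisely the image of $d_e\theta$ under the isomorphism; in particular $c\in\Wedge^2\fg^*$, and because the correspondence carries cocycles to cocycles, $c$ is a Chevalley--Eilenberg cocycle.

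For the identity I would invoke equation~\eqref{eq:theta-comm-ident}, which was established for an arbitrary coadjoint $1$-cocycle earlier in this section:
\[
  \pair{\theta(g)}{\comm{X}{Y}} = \pair{d_e\theta(X)}{Y} - \pair{d_e\theta(\Ad_{g^{-1}}X)}{\Ad_{g^{-1}}Y}.
\]
The first term on the right is $c(X,Y)$ by definition, and the second is $c(\Ad_{g^{-1}}X,\Ad_{g^{-1}}Y)=(\Ad_g^*c)(X,Y)$ by the definition of the twisted form $\Ad_g^*c$. Substituting these gives $\pair{\theta(g)}{\comm{X}{Y}}=c(X,Y)-(\Ad_g^*c)(X,Y)$, as claimed.

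I do not anticipate any genuine obstacle here: the real computation is confined to equation~\eqref{eq:theta-comm-ident}. The only point requiring a little care is matching the action on $c$ with the conventions used elsewhere, namely checking that the second term is indeed $(\Ad_g^*c)(X,Y)$ with the inverse appearing on $\fg$ via $\Ad_{g^{-1}}$ rather than via the coadjoint map on $\fg^*$. This is immediate once the definitions are placed side by side, so the proof is short.
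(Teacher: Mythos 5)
Your proposal is correct and follows essentially the same route as the paper: the first claim is read off from the correspondence $\phi\mapsto\widetilde\phi$ discussed just before the lemma, and the second claim is a direct substitution of the definition of $c$ into equation~\eqref{eq:theta-comm-ident}. The only difference is cosmetic: the paper additionally re-derives the cocycle property of $c$ ``for completeness'' from the calculation~\eqref{eq:Dtheta-cocycle-calc}, whereas you rely (legitimately) on the stated fact that the isomorphism $C^1_{\mathrm{symp}}(\fg;\fg^*)\cong C^2(\fg)$ carries cocycles to cocycles.
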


\begin{proof}
  The first claim follows from the discussion of symplectic cochains
  and cocycles above since \(c=\widetilde{d_e\theta}\), but for 
  completeness we will show that \(c\) is indeed a cocycle. From the
  calculation~\eqref{eq:Dtheta-cocycle-calc} in the preceding proof, 
  we have
  \begin{equation}\label{eq:c-cocycle-calc}
    c(Z,\comm{X}{Y}) = c(\comm{Z}{X},Y) + c(X,\comm{Z}{Y}),
  \end{equation}
  so, using the skew-symmetry of \(c\),
  \begin{equation}
    c(\comm{X}{Y},Z) + c(\comm{Z}{X},Y) + c(\comm{Y}{Z},X) = 0. 
  \end{equation}
  Thus \(c\) is indeed a cocycle in \(C^2(\fg)\). The second claim
  follows from \eqref{eq:theta-comm-ident}.
\end{proof}

The proofs of the final results of this section use some notational
conventions which may appear odd but are chosen so as to agree with
the notation of Sections~\ref{sec:action-to-extension} and
\ref{sec:group-presymp}. It also uses right-invariant vector fields,
which are introduced in Appendix~\ref{sec:inv-vfs}.

\begin{lemma}\label{lemma:rho-Phi-c-reln}
  Let \(\theta\) be a symplectic group cocycle and \(c\) the
  corresponding Chevalley-Eilenberg cocycle. For each \(X\in\fg\),
  define a function \(\Phi_X\in C^\infty(G)\) by \(\Phi_X(g) =
  -\pair{\theta(g)}{X}\). Then for all \(X,Y\in\fg\) and \(g\in G\),
  \begin{equation}
    (\eL_{\rho_Y}\Phi_X)(g) = (\Ad^*_gc)(X,Y)
  \end{equation}
  where \(\eL\) is the Lie derivative.
\end{lemma}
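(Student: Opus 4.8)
The plan is to compute the Lie derivative directly from the flow of the right-invariant vector field $\rho_Y$ and then eliminate the resulting pairing $\pair{\theta(g)}{\comm{X}{Y}}$ using Lemma~\ref{lemma:symp-cocycles}. The substantive content is already packaged in that lemma, so what remains is essentially careful bookkeeping.

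First I would recall that the flow of $\rho_Y$ through $g$ is left translation by $\exp(tY)$, so that $(\eL_{\rho_Y}\Phi_X)(g) = \dv{t}\Phi_X(\exp(tY)g)\eval_{t=0} = -\dv{t}\pair{\theta(\exp(tY)g)}{X}\eval_{t=0}$. If the appendix adopts the opposite sign convention for $\rho_Y$, the flow and hence the intermediate signs change, so this identification of the flow is the one place where the convention genuinely matters; everything downstream is forced by it.

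Next I would apply the cocycle condition \eqref{eq:theta-cocyle-assump} to write $\theta(\exp(tY)g) = \Ad^*_{\exp(tY)}\theta(g) + \theta(\exp(tY))$ and differentiate term by term. Using $\Ad^*_g\alpha = \alpha\circ\Ad_{g^{-1}}$, the first term gives $\dv{t}\pair{\theta(g)}{\Ad_{\exp(-tY)}X}\eval_{t=0} = \pair{\theta(g)}{\comm{X}{Y}}$, while the second gives $\pair{d_e\theta(Y)}{X} = c(Y,X) = -c(X,Y)$. Collecting the signs (and the overall minus sign from the definition of $\Phi_X$) yields $(\eL_{\rho_Y}\Phi_X)(g) = -\pair{\theta(g)}{\comm{X}{Y}} + c(X,Y)$.

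Finally I would substitute the identity $\pair{\theta(g)}{\comm{X}{Y}} = c(X,Y) - (\Ad^*_g c)(X,Y)$ from Lemma~\ref{lemma:symp-cocycles}; the two copies of $c(X,Y)$ cancel and we are left with $(\eL_{\rho_Y}\Phi_X)(g) = (\Ad^*_g c)(X,Y)$, as claimed. The only real obstacle is sign discipline: one must keep straight the convention for $\rho_Y$ and its flow, the definition of $\Ad^*$ (hence the appearance of $\comm{X}{Y}$ rather than $\comm{Y}{X}$ from the first term), and the skew-symmetry $c(Y,X) = -c(X,Y)$ in the second. There is no deeper difficulty, since the nontrivial step—rewriting $\pair{\theta(g)}{\comm{X}{Y}}$ in terms of $c$ and $\Ad^*_g c$—is exactly Lemma~\ref{lemma:symp-cocycles}.
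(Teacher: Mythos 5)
Your proof is correct and takes essentially the same route as the paper: the paper's (very terse) proof likewise computes \((\eL_{\rho_Y}\Phi_X)(g) = -\dv{t}\pair{\theta(\exp(tY)g)}{X}\eval_{t=0}\) and invokes the cocycle condition \eqref{eq:theta-cocyle-assump} together with Lemma~\ref{lemma:symp-cocycles}, which is exactly your two steps. You have simply written out the sign bookkeeping that the paper suppresses into a single displayed equation.
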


\begin{proof}
  Using cocycle condition \eqref{eq:theta-cocyle-assump} and
  Lemma~\ref{lemma:symp-cocycles} (or more directly, recalling the
  calculations at the start of Section~\ref{sec:diff-cocycles}),
  \begin{equation}
    (\eL_{\rho_Y}\Phi_X)(g)
    = -\dv{t} \pair{\theta(\exp(tY)g)}{X}\eval_{t=0}
    = (\Ad^*_gc)(X,Y).
  \end{equation}
\end{proof}

\begin{proposition}\label{prop:symp-cocycles-triv}
  Let \(\theta\) be a symplectic group cocycle for a connected
  Lie group \(G\) and \(c\) the induced Chevalley--Eilenberg
  cocycle. Then \([\theta]=0\) if and only if \([c]=0\).
\end{proposition}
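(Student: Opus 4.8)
The plan is to prove the two implications separately, the forward one being essentially immediate and the converse carrying all the content.

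For \([\theta]=0 \implies [c]=0\): if \(\theta=\partial\mu_0\) for some \(\mu_0\in\fg^*\), that is \(\theta(g)=\Ad^*_g\mu_0-\mu_0\), then differentiating at the identity gives \(d_e\theta(X)=\ad^*_X\mu_0\), whence
\(c(X,Y)=\pair{d_e\theta(X)}{Y}=\pair{\ad^*_X\mu_0}{Y}=-\pair{\mu_0}{\comm{X}{Y}}=(\partial_{\mathrm{CE}}\mu_0)(X,Y)\),
so \([c]=0\).

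For the converse I would start from \(c=\partial_{\mathrm{CE}}\mu_0\), that is \(c(X,Y)=-\pair{\mu_0}{\comm{X}{Y}}\), and pass to the cohomologous cocycle \(\widetilde\theta:=\theta-\partial\mu_0\); since \([\theta]=[\widetilde\theta]\) it is enough to show \(\widetilde\theta=0\). The computation above shows \(d_e(\partial\mu_0)(X)=\ad^*_X\mu_0\), so \(\pair{d_e\widetilde\theta(X)}{Y}=c(X,Y)+\pair{\mu_0}{\comm{X}{Y}}=c(X,Y)-c(X,Y)=0\) for all \(Y\), giving \(d_e\widetilde\theta=0\). The problem thus reduces to a clean statement: a smooth coadjoint group \(1\)-cocycle whose derivative at the identity vanishes is identically zero, provided \(G\) is connected.

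To prove this reduced statement I would differentiate the cocycle relation \(\widetilde\theta(g\exp(tX))=\Ad^*_g\widetilde\theta(\exp(tX))+\widetilde\theta(g)\) in \(t\) at \(t=0\), obtaining \(\dv{t}\widetilde\theta(g\exp(tX))\eval_{t=0}=\Ad^*_g\,d_e\widetilde\theta(X)=0\). As \(X\) ranges over \(\fg\) the tangent vectors \(\dv{t}g\exp(tX)\eval_{t=0}\) span \(T_gG\), so \(d_g\widetilde\theta=0\) at every \(g\); connectedness of \(G\) then forces \(\widetilde\theta\) to be constant, and since every \(1\)-cocycle vanishes at the identity we get \(\widetilde\theta\equiv 0\). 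Hence \(\theta=\partial\mu_0\) and \([\theta]=0\).

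The main obstacle is precisely this last connectedness step: it is the only place where the hypothesis on \(G\) enters, and it encodes the fact that a cocycle is recovered from its derivative at the identity only when \(G\) is connected. I note that one could instead feed \(c=\partial_{\mathrm{CE}}\mu_0\) into identity~\eqref{eq:theta-comm-ident} (equivalently Lemma~\ref{lemma:symp-cocycles}) to learn that \(\widetilde\theta(g)\) annihilates the derived subalgebra \(\comm{\fg}{\fg}\) for every \(g\); but this pins \(\widetilde\theta\) down only on \(\comm{\fg}{\fg}\), and determining it on a complement still needs the differentiation-and-integration argument above, so that step cannot be avoided.
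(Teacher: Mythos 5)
Your proof is correct, and while it shares the paper's overall skeleton (subtract the coboundary, show the modified cocycle has vanishing differential, invoke connectedness and the fact that a cocycle vanishes at the identity), the mechanism for the key step is genuinely different. The paper works with the functions \(\Phi'_X(g)=-\pair{\theta'(g)}{X}\) and differentiates along \emph{right}-invariant vector fields, using Lemma~\ref{lemma:rho-Phi-c-reln} to get \((\eL_{\rho_Y}\Phi_X)(g)=(\Ad^*_g c)(X,Y)\) and then checking pointwise on \(G\) that this cancels against the corresponding derivative of \(\pair{\partial\alpha}{X}\); the cancellation thus happens at every \(g\in G\). You instead perform the cancellation once, at the Lie algebra level, showing \(d_e\widetilde\theta=0\), and then propagate it over \(G\) by differentiating the cocycle identity on the \emph{right}: \(d_g\widetilde\theta\bigl((\lambda_X)_g\bigr)=\Ad^*_g\,d_e\widetilde\theta(X)\), so vanishing at the identity forces \(d\widetilde\theta=0\) everywhere. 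Your reduced statement --- a smooth coadjoint \(1\)-cocycle on a connected group with vanishing derivative at the identity is identically zero --- is a clean, reusable lemma: it immediately gives not only this proposition but also Proposition~\ref{prop:symp-cocycles-unique} (the difference of two cocycles inducing the same \(c\) is a cocycle with vanishing derivative at \(e\)), which the paper proves by a separate run of the same right-invariant-field argument. What the paper's route buys in exchange is that Lemma~\ref{lemma:rho-Phi-c-reln} and the functions \(\Phi_X\) are exactly the objects needed later to verify Neeb's criterion \eqref{eq:neeb} in the proof of Theorem~\ref{thm:homo-sym-ext}, so its machinery does double duty, whereas your argument is self-contained but purpose-built for the cohomological statement. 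Your closing remark is also accurate: the identity \eqref{eq:theta-comm-ident} alone only controls \(\widetilde\theta(g)\) on \(\comm{\fg}{\fg}\), so some version of the differentiate-and-integrate step is unavoidable.
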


\begin{proof}
  First suppose that \(\theta=\partial\alpha\) for some
  \(\alpha\in\fg^*\). Then
  \begin{equation}
    c(X,Y) 
    = \pair{d_e\theta(X)}{Y}
    = \dv{t}\qty[\pair{\alpha}{\Ad_{\exp(-tX)}Y} - \pair{\alpha}{Y}]\eval_{t=0}\\
    =-\pair{\alpha}{\comm{X}{Y}}.
  \end{equation}
  so \(c=\partial_{\mathrm{CE}}\alpha\). Now assume
  \(c=\partial_{\mathrm{CE}}\alpha\) and let
  \(\theta'=\theta-\partial\alpha\). Clearly \(\theta'\) is also a
  symplectic group cocycle in the same cohomology class as
  \(\theta\). We will show that \(\theta'=0\). As in
  Lemma~\ref{lemma:rho-Phi-c-reln}, to each \(X\in\fg\) we assign
  functions \(\Phi_X,\Phi'_X\in C^\infty(G)\) defined by \(\Phi_X(g) =
  -\pair{\theta(g)}{X}\) and \(\Phi'_X(g) = -\pair{\theta'(g)}{X}\);
  then
  \begin{equation}
    \Phi'_X(g) = \Phi_X(g) + \pair{\partial\alpha(g)}{X}.
  \end{equation}
  For all \(X,Y\in\fg\), we thus have
  \begin{equation}
    \eL_{\rho_Y}\Phi_X'
    = \eL_{\rho_Y}\Phi_X + \eL_{\rho_Y}\pair{\partial\alpha}{X}.
  \end{equation}
  where we understand \(\pair{\partial\alpha}{X}\) as the smooth 
  function \(g\mapsto\pair{\partial\alpha(g)}{X}\). On the one hand,
  by Lemma~\ref{lemma:rho-Phi-c-reln} we have
  \((\eL_{\rho_Y}\Phi_X)(g) = (\Ad^*_gc)(X,Y) = 
  	-\pair{\Ad^*\alpha}{\comm{X}{Y}}\),
  while on the other,
  \begin{equation}
    (\eL_{\rho_Y}\pair{\partial\alpha}{X})(g)
    = \dv{t}\pair{\Ad^*_{\exp(tY)g}\alpha}{X}\eval_{t=0}
    = \dv{t}\pair{\Ad^*_{\exp(tY)}\Ad^*_g\alpha}{X}\eval_{t=0}
    = \pair{\Ad^*_g\alpha}{\comm{X}{Y}}.
  \end{equation}
  The assumption on \(c\) therefore gives us
  \(\eL_{\rho_Y}\Phi'_X=0\), so since the values of vector fields
  \(\rho_Y\) span the tangent space at every point in \(G\), we find
  that \(d\Phi'_X=0\). Since \(G\) is connected, this shows that
  \(\Phi'_X\) is a constant for all \(X\in\fg\), so \(\theta'\) is
  constant as a map \(G\to\fg^*\). But \(\theta'\) is a cocycle, so it
  vanishes at the identity and therefore everywhere.
\end{proof}

\begin{proposition}\label{prop:symp-cocycles-unique}
  Let \(\theta\), \(\theta'\) be two symplectic group cocycles for a
  connected group \(G\) which induce the same Chevalley--Eilenberg
  cocycle \(c\). Then \(\theta=\theta'\).
\end{proposition}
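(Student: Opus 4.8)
The plan is to reduce the statement to the vanishing of the difference cocycle $\psi := \theta - \theta'$ and then to run essentially the same connectedness argument as in the second half of the proof of Proposition~\ref{prop:symp-cocycles-triv}.

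First I would observe that the hypothesis that $\theta$ and $\theta'$ induce the same Chevalley--Eilenberg cocycle $c$ actually forces $d_e\theta = d_e\theta'$ as linear maps $\fg \to \fg^*$. Indeed, by Lemma~\ref{lemma:symp-cocycles} we have $c(X,Y) = \pair{d_e\theta(X)}{Y} = \pair{d_e\theta'(X)}{Y}$ for all $X,Y \in \fg$, and since the pairing is nondegenerate this determines $d_e\theta(X)$ from $c$ for every $X$. (This is just the one-to-one correspondence $C^1_{\mathrm{symp}}(\fg;\fg^*) \cong C^2(\fg)$ recorded above, applied to the symplectic cochains $d_e\theta$ and $d_e\theta'$.) Hence $d_e\psi = d_e\theta - d_e\theta' = 0$.

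Next, since the cocycle condition~\eqref{eq:theta-cocyle-assump} is linear in $\theta$, the difference $\psi$ of two $\fg^*$-valued group $1$-cocycles is again such a cocycle. Its derivative $d_e\psi = 0$ is (trivially) a symplectic cochain, so $\psi$ is itself a symplectic group cocycle, and the Chevalley--Eilenberg cocycle $c_\psi$ it induces is $c_\psi(X,Y) = \pair{d_e\psi(X)}{Y} = 0$. I would then define, for each $X \in \fg$, the function $\Psi_X \in C^\infty(G)$ by $\Psi_X(g) = -\pair{\psi(g)}{X}$, exactly as in Lemma~\ref{lemma:rho-Phi-c-reln}. Applying that lemma to $\psi$ gives $(\eL_{\rho_Y}\Psi_X)(g) = (\Ad^*_g c_\psi)(X,Y) = 0$ for all $X,Y \in \fg$ and all $g \in G$.

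Finally, since the values of the right-invariant vector fields $\rho_Y$, as $Y$ ranges over $\fg$, span the tangent space at every point of $G$ (see Appendix~\ref{sec:inv-vfs}), the vanishing of all the Lie derivatives $\eL_{\rho_Y}\Psi_X$ means $d\Psi_X = 0$. As $G$ is connected, each $\Psi_X$ is constant. A $1$-cocycle vanishes at the identity (set $g_1 = g_2 = e$ in the cocycle condition to get $\psi(e) = 2\psi(e)$), so $\Psi_X(e) = -\pair{\psi(e)}{X} = 0$, whence $\Psi_X \equiv 0$ for every $X$ and therefore $\psi \equiv 0$, i.e.\ $\theta = \theta'$. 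I do not expect any genuine obstacle here: the entire content is the identification $d_e\theta = d_e\theta'$ followed by the connectedness rigidity argument already packaged in Lemma~\ref{lemma:rho-Phi-c-reln}; the only point requiring a little care is to confirm that $\psi$ qualifies as a symplectic group cocycle so that Lemma~\ref{lemma:rho-Phi-c-reln} applies verbatim.
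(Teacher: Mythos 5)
Your proof is correct and follows essentially the same route as the paper: both rest on Lemma~\ref{lemma:rho-Phi-c-reln} together with the connectedness argument (right-invariant vector fields spanning each tangent space, so a function with vanishing Lie derivatives is constant, and a cocycle vanishing at the identity is then zero). The only cosmetic difference is that you apply the lemma to the difference cocycle $\psi=\theta-\theta'$ (after checking it is a symplectic cocycle inducing $c_\psi=0$), whereas the paper applies the lemma to $\theta$ and $\theta'$ separately and subtracts the resulting identities.
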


\begin{proof}
	By Lemma~\ref{lemma:rho-Phi-c-reln} we have 
	\begin{equation}
		\eL_{\rho_Y}\Phi_X' = \eL_{\rho_Y}\Phi_X = (\Ad^*_gc)(X,Y),
	\end{equation}
	and so \(\eL_{\rho_Y}(\Phi_X'-\Phi_X)=0\). By a similar argument 
	to the previous proof, this shows that the cocycle
	\(\theta'-\theta\) is constant and therefore zero, so 
	\(\theta'=\theta\).
\end{proof}

We finish this section by noting that the preceding pair of results
show that the map \(Z^1_{\mathrm{symp}}(G;\fg^*)\to Z^2(\fg)\) taking
a symplectic group cocycle to the corresponding
Chevalley--Eilenberg cocycle is injective and restricts to an 
isomorphism of the subspaces of coboundaries (note that all
coboundaries in \(Z^1(G;\fg)\) are symplectic), so there is an
induced injective map 
\(H^1_{\mathrm{symp}}(G;\fg^*)\hookrightarrow H^2(\fg)\).

\section{One-dimensional central extensions of Lie groups}
\label{sec:group-ext}

In Section~\ref{sec:lie-alg-ext}, we used data obtained from a
symplectic action of a Lie group \(G\) to construct a central
extension \(\widehat\fg\) of the Lie algebra of \(G\) as well as
actions of \(G\) on \(\widehat\fg\) and on its dual \(\widehat\fg^*\)
and asked whether such actions of $G$ might result from the adjoint
and coadjoint representations of a central extension of \(G\) itself.  We
now consider the existence of such group extensions and their adjoint
and coadjoint representations. We start with a definition.

\begin{definition}
  Let \(G\) be a connected Lie group. A \emph{one-dimensional central
    extension} of \(G\) is a short exact sequence of Lie groups
  \begin{equation}\label{eq:grp-cent-ext}
  \begin{tikzcd}
   1 \arrow[r] & K \arrow[r] & \widehat{G} \arrow[r] &
   G \arrow[r] & 1.
  \end{tikzcd}
  \end{equation}
  where \(K\) is a one-dimensional abelian group whose image lies in
  the centre of \(\widehat{G}\). If we do not wish to specify \(K\),
  we may say that \(\widehat{G}\) is a one-dimensional central extension
  of \(G\).
\end{definition}

\subsection{Adjoint and coadjoint representations}
\label{sec:adj-coadj}

Let $\widehat G$ be a one-dimensional central extension of $G$.  We
denote by \(\Ad: G\to\GL(\fg)\) and \(\widehat\Ad:
\widehat{G}\to\GL(\widehat\fg)\) the adjoint representations of \(G\)
and \(\widehat{G}\) respectively; the coadjoint representations are
denoted \(\Ad^*: G\to\GL(\fg^*)\) and \(\widehat\Ad^*:
\widehat{G}\to\GL(\widehat\fg^*)\). Let us identify \(K\) with its
image in \(\widehat{G}\) and regard \(G\) as \(G=\widehat{G}/K\).
Since \(K\) is central, its coadjoint action on \(\widehat\fg\) is
trivial so \(\widehat\Ad\) factors through \(G\) to give a
representation of \(G\) on \(\widehat\fg\), also denoted
\(\widehat\Ad\), such that \(\widehat\Ad_g=\widehat\Ad_{\widehat{g}}\)
where \(\widehat{g}\) is any lift of \(g\in G\); that is, there exists
a dotted arrow making the following diagram commute:
\begin{equation}
  \begin{tikzcd}
    \widehat{G} \ar[rr, "\widehat\Ad"] \ar[dr,two heads] 
    & & \GL(\widehat\fg)\\
    & G=\widehat{G}/K \ar [ur,dashed,"\widehat\Ad" below]&
  \end{tikzcd}.
\end{equation}
Of course, \(K\) also acts trivially via the coadjoint action which
thus factors through \(G\) similarly to give a representation
\(\widehat\Ad^*:G\to\GL(\widehat\fg^*)\) dual to
\(\widehat\Ad:G\to\GL(\widehat\fg)\). On the other hand, the adjoint
and coadjoint representations of \(G\) give rise to representations of
\(\widehat{G}\) by pulling back along the quotient map:
\begin{equation}
  \begin{tikzcd}
    \widehat{G} \ar[rr,dashed,"\Ad"] \ar[dr,two heads] 
    & & \GL(\fg)\\
    & G=\widehat{G}/K \ar [ur,"\Ad" below]&
  \end{tikzcd}.
\end{equation}
One can check that the canonical map
\(\widehat{\fg}\twoheadrightarrow\fg\) and the dual embedding
\(\fg^*\hookrightarrow\widehat{\fg}^*\) are \(G\)-equivariant
(equivalently \(\widehat{G}\)-equivariant).  We shall refer to
$\widehat\Ad$ and $\widehat\Ad^*$ as the \emph{factored} adjoint and
coadjoint actions of $G$ on $\widehat\g$ and $\widehat\g^*$,
respectively.

We now pick a complement to \(\Lie(K)=\fk\cong\RR\) in
\(\widehat{\fg}\) and identify it with \(\fg\) so that we have a (not
necessarily invariant) splitting of vector spaces
\(\widehat{\fg}=\fg\oplus\RR\). Since \(K\) is central,
\(\widehat\Ad\) acts trivially on the \(\RR\) factor. Using this and
equivariance of the quotient map, we conclude that the hatted adjoint
action is of the form
\begin{equation}\label{eq:hat-adjoint-theta}
  \widehat\Ad_g(X,u) := \qty(\Ad_g X, u - \pair{\theta(g^{-1})}{X})
\end{equation}
for some map \(\theta:G\to\fg^*\), where the reasons for the sign and
the inversion in the argument will become clear soon.

The splitting \(\widehat\fg=\fg\oplus\RR\) induces a dual splitting
\(\widehat\fg^*=\fg^*\oplus\RR\) such that the natural embedding
\(\fg^*\hookrightarrow\widehat{\fg}^*\) is given by
\(\alpha\mapsto(\alpha,0)\). The natural paring between
\(\widehat{\fg}\) and \(\widehat{\fg}^*\) is then
\begin{equation}
	\pair{(\alpha,\zeta)}{(X,u)} = \pair{\alpha}{X} + \zeta u
\end{equation}
where \(\alpha\in\fg^*\), \(X\in\fg\) and \(\zeta,u\in\RR\). The factored coadjoint action can be explicitly computed as follows:
\begin{equation}
\begin{split}
	\pair{\widehat\Ad^*_g(\alpha,\zeta)}{(X,u)} 
	&:= \pair{(\alpha,\zeta)}{\widehat\Ad_{g^{-1}}(X,u)}\\
	&= \pair{(\alpha,\zeta)}{\qty(\Ad_{g^{-1}} X, u - \pair{\theta(g)}{X})}\\
	&= \pair{\alpha}{\Ad_{g^{-1}}X} + \zeta u - \zeta\pair{\theta(g)}{X}\\
	&= \pair{\Ad^*_g\alpha - \zeta\theta(g)}{X} + \zeta u.
\end{split}
\end{equation}
Abstracting away \((X,u)\), we thus have
\begin{equation}\label{eq:hat-coadjoint-theta}
	\widehat\Ad^*_g(\alpha,\zeta) = (\Ad^*_g\alpha - \zeta\theta(g),\zeta).
\end{equation}

\begin{lemma}\label{lemma:theta-cocycle}
  The map \(\theta:G\to\fg^*\) in
  equation~(\ref{eq:hat-coadjoint-theta}) is a symplectic group
  cocycle, and the associated Chevalley--Eilenberg cocycle
  \(c\in\Wedge^2\fg^*\) (see Lemma~\ref{lemma:symp-cocycles}) is the
  one determining the presentation of \(\widehat\fg\) as an extension
  of \(\fg\).  Furthermore, their cohomology classes do not depend on
  the splitting of \(\widehat{g}\).
\end{lemma}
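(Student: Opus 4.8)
\emph{The plan is} to derive all three assertions from the single structural fact that the factored adjoint action $\widehat\Ad : G \to \GL(\widehat\fg)$ is a bona fide group homomorphism, together with its infinitesimal counterpart. First I would establish that $\theta$ is a group $1$-cocycle valued in the coadjoint representation. Applying $\widehat\Ad_{g_1g_2} = \widehat\Ad_{g_1}\widehat\Ad_{g_2}$ to a vector $(X,u)$ and expanding both sides via \eqref{eq:hat-adjoint-theta}, the $\fg$-components agree automatically, while comparing the central ($\RR$) components and using $\pair{\theta(g_1^{-1})}{\Ad_{g_2}X} = \pair{\Ad^*_{g_2^{-1}}\theta(g_1^{-1})}{X}$ gives, for all $X \in \fg$,
\[
  \pair{\theta((g_1g_2)^{-1})}{X} = \pair{\theta(g_2^{-1})}{X} + \pair{\Ad^*_{g_2^{-1}}\theta(g_1^{-1})}{X}.
\]
Relabelling $h_1 = g_2^{-1}$ and $h_2 = g_1^{-1}$ (so that $(g_1g_2)^{-1} = h_1h_2$) this is exactly the cocycle condition \eqref{eq:symp-theta-cocycle}. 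By Lemma~\ref{lemma:Dtheta-cocycle}, $d_e\theta$ is then a cocycle in $C^1(\fg;\fg^*)$.

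Next I would differentiate \eqref{eq:hat-adjoint-theta} to identify $c$. The key point is that the factored action agrees with the honest adjoint action of $\widehat G$ along lifts, and that $\exp_{\widehat G}(t(Z,0))$ is a lift of $\exp_G(tZ)$ because the projection $\widehat\fg \to \fg$ intertwines the two exponential maps. Hence differentiating $\widehat\Ad_{\exp(tZ)}(X,u)$ at $t=0$ computes $\widehat{\ad}_{(Z,0)}(X,u) = \comm{(Z,0)}{(X,u)}$ in $\widehat\fg$; the $\fg$-component gives $\comm{Z}{X}$ and the central component gives $\pair{d_e\theta(Z)}{X} = c(Z,X)$. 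Comparing with the extension bracket $\comm{(Z,0)}{(X,u)} = (\comm{Z}{X}, c_0(Z,X))$ forces $c = c_0$, the cocycle defining $\widehat\fg$. In particular, skew-symmetry of the bracket shows $c(Z,X) = -c(X,Z)$, so $d_e\theta$ is symplectic and $\theta$ is a symplectic group cocycle; Lemma~\ref{lemma:symp-cocycles} then confirms that $c = \widetilde{d_e\theta}$ is precisely the associated Chevalley--Eilenberg cocycle.

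For independence of the splitting, I would use that the only freedom in choosing a complement to $\fk \cong \RR$ is a linear map $\beta \in \fg^*$, so a second complement takes the form $s'(X) = s(X) + \beta(X)e$ with $e$ generating $\fk$. Since $e$ is central, $\widehat\Ad_g e = e$, and recomputing $\widehat\Ad_g(s'(X))$ yields $\theta'(h) = \theta(h) + \Ad^*_h\beta - \beta$, i.e.\ $\theta' = \theta + \partial\beta$. Likewise the extension bracket gives $c'(X,Y) = c(X,Y) - \beta(\comm{X}{Y}) = c(X,Y) + (\partial_{\mathrm{CE}}\beta)(X,Y)$. Thus both $[\theta]$ and $[c]$ are unchanged, and these are visibly consistent with the correspondence of Section~\ref{sec:symp-cocycle}.

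\emph{The main obstacle} is the bridge in the second step between the factored action, which lives on $G = \widehat G/K$, and the Lie bracket on $\widehat\fg$: one must justify carefully that differentiating the factored $\widehat\Ad$ along $\exp(tZ)$ reproduces $\widehat{\ad}_{(Z,0)}$, which rests on choosing the specific lift $\exp_{\widehat G}(t(Z,0))$ and on the centrality of $K$. Once this identification is in place, everything else is routine bookkeeping with the dual pairing and the coboundary formulas.
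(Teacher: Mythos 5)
Your proposal is correct, and for the first two claims it follows essentially the same route as the paper: the paper extracts the cocycle condition by applying the representation property to the factored \emph{coadjoint} action \(\widehat\Ad^*_{g_1g_2}=\widehat\Ad^*_{g_1}\widehat\Ad^*_{g_2}\) and reading off components, which is just the dual of your computation with \(\widehat\Ad\); and both you and the paper then differentiate \eqref{eq:hat-adjoint-theta} to recover the bracket \(\comm{(X,u)}{(Y,v)}=(\comm{X}{Y},c(X,Y))\), whence \(c\) is the extension cocycle and skew-symmetry of the bracket makes \(d_e\theta\) symplectic. Your extra care in justifying that differentiating the factored action along \(\exp_G(tZ)\) computes \(\widehat\ad_{(Z,0)}\) (via the lift \(\exp_{\widehat G}(t(Z,0))\) and centrality of \(K\)) fills in a step the paper leaves implicit, and is welcome. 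Where you genuinely diverge is the splitting-independence claim: you parametrise the change of splitting by \(\beta\in\fg^*\) and compute explicitly that \(\theta'=\theta+\partial\beta\) and \(c'=c+\partial_{\mathrm{CE}}\beta\), whereas the paper argues abstractly that \([c']=[c]\) because the class of \(c\) is determined by the isomorphism class of the extension, and then transfers this to \(\theta\) via Proposition~\ref{prop:symp-cocycles-triv} applied to the symplectic cocycle \(\theta'-\theta\). Your version is more self-contained and yields the exact coboundaries rather than just equality of classes (it also makes visible that the two coboundaries correspond under the map of Section~\ref{sec:symp-cocycle}); the paper's version is shorter and reuses machinery it has already built, which is in keeping with its emphasis on the correspondence \(H^1_{\mathrm{symp}}(G;\fg^*)\hookrightarrow H^2(\fg)\).
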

	
\begin{proof}
  Let \(g_1,g_2\in G\). Since \(\widehat\Ad^*\) is a
  representation, \(\widehat\Ad^*_{g_1g_2}=\widehat\Ad^*_{g_1}
  \widehat\Ad^*_{g_2}\) so we see that 
  \begin{equation}
    \begin{split}
      \qty(\Ad^*_{g_1g_2}\alpha - \zeta\theta(g_1g_2),\zeta) &=
      \widehat\Ad^*_{g_1}\qty(\Ad^*_{g_2}\alpha - \zeta\theta(g_2),\zeta)\\
      &= \qty(\qty(\Ad^*_{g_1}\Ad^*_{g_2}\alpha - \zeta\Ad^*_{g_1}\theta(g_2)) - \zeta\theta(g_1),\zeta)
    \end{split}
  \end{equation}
  for all \(\alpha\in\fg^*\) and \(\zeta\in\RR\).  Then we use that
  \(\Ad^*\) is a representation and abstract \(\zeta\) to find
  \begin{equation}\label{eq:theta-cocycle}
    \theta(g_1g_2) = \Ad^*_{g_1}\theta(g_2) + \theta(g_1),
  \end{equation}
  thus \(\theta\) is a cocycle. We now differentiate \eqref{eq:hat-adjoint-theta} to find
  \begin{equation}
    \comm{(X,u)}{(Y,v)}
    = \qty(\comm{X}{Y},c(X,Y))
  \end{equation}
  where \(c(X,Y):=\pair{d_e\theta(X)}{Y}\); in particular, \(\theta\)
  is symplectic as required. If a different splitting of
  \(\widehat\fg\) is chosen then \([c]=[c']\), since the cohomology
  class of \(c\) is determined by the isomorphism class of the
  extension. Now note that \(\theta'-\theta\) is a symplectic group
  cocycle and its cohomology class is trivial by
  Proposition~\ref{prop:symp-cocycles-triv} since \([c'-c]=0\), so
  \([\theta']=[\theta]\).
\end{proof}

\begin{remark}
	Note that the action \eqref{eq:hat-coadjoint-theta} preserves the affine hyperplanes \(\zeta=\mathrm{constant}\), so in particular if we identify \(\fg^*\) with the affine hyperplane \(\zeta=1\) in \(\widehat\fg^*\), we obtain an affine action \(\rho\) of \(G\) on \(\fg^*\) given by
	\begin{equation}\label{eq:aff-act-dual}
		\rho(g)\alpha = \Ad^*_g\alpha - \theta(g).
	\end{equation}
	This is the same formula as \eqref{eq:theta-aff-act}, except there the cocycle \(\theta\) arose from a symplectic action rather than from the coadjoint action of a central group extension. The actions of \(G\) on \(\widehat\fg\) and \(\widehat\fg^*\) defined in Section~\ref{sec:lie-alg-ext} are also of the same form as the hatted adjoint and coadjoint representations here, which again prompts the question of whether those actions do indeed arise from a central group extension. The next section contains the key result we need to answer this question.
\end{remark}

\subsection{Existence of extensions}

We now present a necessary and sufficient criterion for the existence
of central extensions of Lie groups due to Neeb \cite{MR1424633}. We
use the notation for left- and right-translations and left- and
right-invariant vector fields on \(G\) introduced in
Appendix~\ref{sec:inv-vfs}.

Let \(c\in \Wedge^2\fg^*\) be a Chevalley--Eilenberg 2-cocycle for
\(\fg\). We showed in Section~\ref{sec:lie-alg-ext} that this defines
a one-dimensional central extension of \(\fg\), and recalling the
isomorphism between the Chevalley--Eilenberg complex for \(\fg\) and
the left-invariant de Rham complex on \(G\) discussed in
Appendix~\ref{sec:left-inv-forms}, there exists a closed,
left-invariant 2-form on \(G\) defined by \(\Omega_g =
L_{g^{-1}}^*c\), or equivalently,
\begin{equation}\label{eq:Omega-def}
	\Omega(\lambda_X,\lambda_Y)(g) = c(X,Y)
\end{equation}
for all \(g\in G\). We may now state Neeb's result.

\begin{theorem}[Neeb \cite{MR1424633}]\label{thm:neeb}
  Let \(G\) be a connected Lie group, \(c\in\Wedge^2\fg^*\) a
  Chevalley--Eilenberg 2-cocycle of the Lie algebra \(\fg\) defining a
  one-dimensional central extension
  \(0\to\RR\to\widehat\fg\to\fg\to0\) and \(\Omega\in\Omega^2(G)\) the
  corresponding left-invariant closed 2-form. Then there exists a
  one-dimensional central extension \(\widehat{G}\) of \(G\)
  integrating the above central extension if and only if for any
  \(X\in\fg\) there exists a function \(\Phi_X\in C^\infty(G)\)
  satisfying
  \begin{equation}\label{eq:neeb}
    \imath_{\rho_X}\Omega = d\Phi_X.
  \end{equation}
\end{theorem}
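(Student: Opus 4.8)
The plan is to reduce the integrability question to the universal cover and then to a descent problem, and to recognise Neeb's condition as the vanishing of the obstruction to that descent. Write $q : \tG \to G$ for the universal covering homomorphism and $\Gamma := \ker q \cong \pi_1(G)$, a discrete central subgroup of $\tG$. By the Lie correspondence the algebra extension $\widehat\fg$ always integrates to a one-dimensional central extension $\widehat{\tG}$ of $\tG$; since $\tG$ is simply connected and (by Cartan's theorem) $\pi_2$ of any finite-dimensional Lie group vanishes, so that $H_2(\tG;\ZZ)=0$, there is no period obstruction and this step is unconditional. The genuine content of the theorem is therefore whether $\widehat{\tG}$ descends to a central extension of $G = \tG/\Gamma$; equivalently, whether the inclusion $\Gamma \hookrightarrow \tG$ admits a homomorphic lift $\sigma : \Gamma \to \widehat{\tG}$ with central, discrete image, for then $\widehat G := \widehat{\tG}/\sigma(\Gamma)$ does the job (the image meets the central $\RR$ only in the identity, so the kernel of $\widehat G \to G$ is again one-dimensional).

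Next I would reinterpret Neeb's condition cohomologically. The right-invariant vector fields $\rho_X$ are the fundamental vector fields of the left-translation action of $G$ on itself, and since $\Omega$ is left-invariant we have $\eL_{\rho_X}\Omega = 0$; with $d\Omega = 0$ and Cartan's formula this makes each $\imath_{\rho_X}\Omega$ a closed one-form. Thus $X \mapsto [\imath_{\rho_X}\Omega]$ is a well-defined linear map $\fg \to H^1_{\mathrm{dR}}(G)$, and the existence of the primitives $\Phi_X$ of \eqref{eq:neeb} is precisely the statement that this map is identically zero. Using the de Rham isomorphism $H^1_{\mathrm{dR}}(G) \cong \Hom(\pi_1(G),\RR)$, this is in turn equivalent to the vanishing of all periods $\oint_\gamma \imath_{\rho_X}\Omega$ as $\gamma$ ranges over loops representing classes in $\Gamma$ and $X$ over $\fg$. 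This period pairing $\Gamma \times \fg \to \RR$ is exactly the flux obstruction to the descent above.

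To close the argument I would show that the existence of $\widehat G$ is controlled by the very same period pairing, which gives both implications at once. A homomorphic central lift $\sigma : \Gamma \to \widehat{\tG}$ exists if and only if, for each $\gamma \in \Gamma$, a chosen lift can be corrected to be central; the failure of centrality is measured by the commutator pairing of $\widehat{\tG}$, whose infinitesimal form is $c$ and whose integrated value along a loop representing $\gamma$ equals $\oint_\gamma \imath_{\rho_X}\Omega$. Identifying this descent obstruction with the period pairing then yields: $\widehat G$ exists $\iff$ the period pairing vanishes $\iff$ every $\imath_{\rho_X}\Omega$ is exact. The main obstacle is precisely this identification: one must show that the obstruction to a \emph{central, homomorphic, discrete} lift of $\Gamma$ — and not merely a set-theoretic one — is captured exactly by the integrated flux $\oint_\gamma \imath_{\rho_X}\Omega$, and must take care that these periods are genuinely computed downstairs on $G$ rather than on the (simply connected, hence period-free) cover $\tG$. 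This matching of the analytic condition \eqref{eq:neeb} with the group-theoretic descent obstruction is the technical heart underlying Neeb's criterion.
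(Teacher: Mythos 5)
First, a point of calibration: the paper does not prove this statement at all---it is quoted as Neeb's theorem \cite{MR1424633} and used as a black box---so your attempt can only be measured against Neeb's own argument, whose overall skeleton you have reconstructed correctly. The sound parts: the Lie algebra extension integrates unconditionally over the universal cover \(\tG\) (using \(\pi_2(\tG)=0\)); since \(\Omega\) is left-invariant and the flows of the right-invariant fields \(\rho_X\) are left translations, \(\eL_{\rho_X}\Omega=0\), so each \(\imath_{\rho_X}\Omega\) is closed and condition~\eqref{eq:neeb} is exactly the vanishing of all periods over \(\pi_1(G)\); and a central, homomorphic, discrete lift \(\sigma:\Gamma\to\widehat{\tG}\) of \(\Gamma=\ker(\tG\to G)\) does yield \(\widehat{G}=\widehat{\tG}/\sigma(\Gamma)\) with one-dimensional central kernel.

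There is, however, a genuine gap, and you flag it yourself: the identification of the group-theoretic descent obstruction with the period pairing is asserted, never proved---and it is not a technical afterthought, it \emph{is} the content of the theorem. Concretely, write \(q':\widehat{\tG}\to\tG\) and \(N:=(q')^{-1}(\Gamma)\); since \(\Gamma\) is central in \(\tG\), the commutator \((g,n)\mapsto gng^{-1}n^{-1}\) defines a pairing \(\widehat{\tG}\times N\to\RR\) which vanishes precisely when a lift \(\sigma\) exists, and what must actually be shown is that for \(X\in\fg\), \(\gamma\in\Gamma\), the one-parameter group \(t\mapsto\exp(t\widehat{X})\,\widetilde{\gamma}\,\exp(-t\widehat{X})\,\widetilde{\gamma}^{-1}\in\RR\) has slope \(\oint_{c_\gamma}\imath_{\rho_X}\Omega\), where \(c_\gamma\) is a loop in \(G\) representing \(\gamma\). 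Proving this requires exhibiting a primitive of the pullback of \(\Omega\) to \(\widehat{\tG}\) (e.g.\ the left-invariant one-form dual to the central direction, whose exterior derivative is \(-c\) by the Maurer--Cartan equation) and tracking its holonomy along lifted loops; none of this computation appears in your proposal. A secondary gap: your ``equivalently'' in the descent step silently assumes the kernel of \(\widehat{G}\to G\) is \(\RR\). The paper's definition allows any one-dimensional kernel (e.g.\ a circle), and an extension with circle kernel only produces \(D:=\pi_1(\widehat{G})\subset\widehat{\tG}\), an extension of \(\Gamma\) by \(\ZZ\), not a lift \(\sigma\); to close the loop one must note that \(D\) is discrete, central and maps onto \(\Gamma\), hence \(N=\RR\cdot D\) is central and abelian, and the extension \(0\to\RR\to N\to\Gamma\to 0\) splits because \(\RR\) is divisible, any splitting having automatically discrete image. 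As it stands, your text is an accurate road map of Neeb's proof whose hardest leg is labelled ``technical heart'' rather than traversed.
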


One might observe that equation \eqref{eq:neeb} is similar to
\eqref{eq:symp-ham-vfs}; indeed, if we interpret \(\Omega\) as a
presymplectic structure on \(G\), this equation says that
\(X\mapsto\Phi_X\) is a comoment map for the left action of \(G\)
on itself. Indeed, although Neeb does not interpret the equation
in this way in \cite{MR1424633}, the earlier work of Tuynman and
Wiegerinck \cite{MR948561} upon which Neeb builds does. We will 
return to this point of view in Section~\ref{sec:group-presymp}.

\begin{remark}
	Neeb's result is actually slightly stronger than the
	above, although we will not need the stronger version; fixing a
	connected one-dimensional Lie group \(K\), functions \(\Phi_X\)
	satisfying \eqref{eq:neeb} exist if and only if there is a central
	extension \(\widehat{G}\) of \(G\) by \(K\) integrating the extension
	of Lie algebras. Note, however, that the theorem above does not say 
	that there is a \emph{unique} such extension, only that one exists
	-- even for a fixed choice of kernel group \(K\).
\end{remark}

\section{Symplectic structures and central extensions}
\label{sec:sympl-struct-central-extn}

Finally we bring the preceding discussion together and present our
main result. We will again use the results and notation of
Appendix~\ref{sec:lie-group-structures} freely.

\subsection{From the symplectic action to the central extension}
\label{sec:action-to-extension}

Let us recapitulate some of the main points of the preceding discussion.
In Section~\ref{sec:symp-action}, we considered a symplectic homogeneous
\(G\)-space \(M\), with \(G\) a connected Lie group, with a moment map
\(\mu:M\to\fg^*\) and comoment map \(\varphi:\fg\to C^\infty(M)\). 
We now assume that \(M\) is simply-connected, so such maps exist, but we
do not assume that the action is hamiltonian. 
By Lemma~\ref{lemma:symp-theta-cocycle}, the failure of equivariance of
the moment map is measured by a group cocycle \(\theta:G \to \fg^*\)
which is symplectic by Lemma~\ref{lemma:Dtheta-symplectic-cocycle} and gives
rise to a Chevalley--Eilenberg 2-cocycle \(c\in\wedge^2 \fg^*\) and
thus to a closed, left-invariant 2-form \(\Omega\) on \(G\) given by
equation~\eqref{eq:Omega-def}. In Section~\ref{sec:lie-alg-ext}, we
used \(c\) to construct a central extension \(\widehat\fg\) of \(\fg\),
upon which \(G\) acts via an action \eqref{eq:hat-g-theta-action}
determined by \(\theta\). There is a dual action
\eqref{eq:hat-g*-theta-action} of \(G\) on \(\widehat\fg^*\) with
respect to which the extended moment map
\(\widehat\mu:M\to\widehat\fg^*\) defined by equation \eqref{eq:hat-mu}
is equivariant. Dual to this, we have an extended comoment map 
\(\widehat\varphi: \widehat\fg\to C^\infty(M)\) defined by equation 
\eqref{eq:hat-phi} which is a Lie algebra homomorphism.
One is naturally lead to ask whether there exists a
central group extension \(\widehat{G}\) of \(G\) integrating the algebra
extension, and furthermore whether the actions of \(G\) on
\(\widehat\fg\) and \(\widehat\fg^*\) arise from the adjoint and
coadjoint representations of \(\widehat{G}\), and finally whether
\(\widehat\mu\) is a moment map for an action of \(\widehat{G}\) on
\(M\). Theorem~\ref{thm:neeb} allows us to finally answer all of these
questions in the affirmative.

\begin{theorem}\label{thm:homo-sym-ext}
  Let $G$ be a connected Lie group and \((M,\omega)\) be a
  simply-connected symplectic homogeneous manifold of \(G\).  Then
  there exists a one-dimensional central extension \(\widehat{G}\) of
  \(G\) such that \(M\) is the universal cover of a coadjoint orbit of
  \(\widehat{G}\).
\end{theorem}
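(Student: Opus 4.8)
The plan is to apply Neeb's criterion (Theorem~\ref{thm:neeb}) to the Chevalley--Eilenberg cocycle \(c\) arising from the symplectic action, thereby integrating the Lie algebra extension \(\widehat\fg\) to a group extension \(\widehat G\), and then to recognise \(M\) as a simply-connected hamiltonian \(\widehat G\)-space so that Corollary~\ref{cor:covering} applies. Concretely, since \(M\) is simply-connected, the comoment map \(\varphi\) exists, the moment map \(\mu\) is defined, and by Lemmas~\ref{lemma:symp-theta-cocycle} and \ref{lemma:Dtheta-symplectic-cocycle} we obtain the symplectic group cocycle \(\theta:G\to\fg^*\) and the closed left-invariant \(2\)-form \(\Omega\) on \(G\) with \(\Omega(\lambda_X,\lambda_Y)=c(X,Y)\).

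The crucial step is verifying Neeb's integrability condition \eqref{eq:neeb}: for each \(X\in\fg\) we must exhibit \(\Phi_X\in C^\infty(G)\) with \(\imath_{\rho_X}\Omega = d\Phi_X\). The key observation is that the symplectic cocycle \(\theta\) itself supplies these functions. Following the notation of Lemma~\ref{lemma:rho-Phi-c-reln}, I would set \(\Phi_X(g) = -\pair{\theta(g)}{X}\). That lemma gives \((\eL_{\rho_Y}\Phi_X)(g) = (\Ad^*_g c)(X,Y)\), and I would show that this precisely matches \((\imath_{\rho_X}\Omega)(\rho_Y)(g)\). This requires expressing \(\Omega\) in terms of right-invariant vector fields rather than left-invariant ones: one computes \(\Omega(\rho_X,\rho_Y)(g)\) by relating \(\rho_X\) to \(\lambda_{\Ad_{g^{-1}}X}\) at the point \(g\), which introduces exactly the factor \(\Ad^*_g\) needed. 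Once the Lie derivative of \(\Phi_X\) along every \(\rho_Y\) agrees with \(\imath_{\rho_X}\Omega\) evaluated on \(\rho_Y\), and since the \(\rho_Y\) span every tangent space, we conclude \(d\Phi_X = \imath_{\rho_X}\Omega\), so Neeb's condition holds and \(\widehat G\) exists.

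With \(\widehat G\) in hand, I would then identify its adjoint and coadjoint representations with the explicit actions \eqref{eq:hat-g-theta-action} and \eqref{eq:hat-g*-theta-action} constructed in Section~\ref{sec:lie-alg-ext}: by Lemma~\ref{lemma:theta-cocycle} the factored coadjoint action of any one-dimensional central extension takes the form \eqref{eq:hat-coadjoint-theta} for some symplectic cocycle \(\theta'\) inducing \(c\), and by Proposition~\ref{prop:symp-cocycles-unique} the cocycle inducing a given \(c\) is unique, forcing \(\theta'=\theta\). Thus the \(\widehat G\)-action on \(\widehat\fg^*\) is exactly the one for which \(\widehat\mu:M\to\widehat\fg^*\) is equivariant, as verified in \eqref{eq:hat-mu-equi}. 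Since \(\widehat\varphi\) is a Lie algebra homomorphism by \eqref{eq:hat-phi-hom}, the \(\widehat G\)-action on \(M\) is hamiltonian with equivariant moment map \(\widehat\mu\); it is transitive because \(G=\widehat G/K\) already acts transitively. Therefore \((M,\omega,\widehat\mu)\) is a simply-connected hamiltonian \(\widehat G\)-space, and Corollary~\ref{cor:covering} identifies \(M\) as the universal cover of the coadjoint orbit \(\widehat\mu(M)\) of \(\widehat G\).

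I expect the main obstacle to be the careful bookkeeping in matching \(\imath_{\rho_X}\Omega\) against \(\eL_{\rho_Y}\Phi_X\): the form \(\Omega\) is left-invariant whereas Neeb's condition contracts it with right-invariant fields, so the computation hinges on the interplay between left- and right-invariant vector fields recorded in Appendix~\ref{sec:inv-vfs}, and on correctly tracking the \(\Ad^*_g\) twist so that the \(\theta\)-derived functions \(\Phi_X\) produce the right Lie derivatives. Once this identification is secured, the remaining steps are essentially assembling results already established.
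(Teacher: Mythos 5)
Your proposal is correct and follows essentially the same route as the paper's proof: the same functions \(\Phi_X(g) = -\pair{\theta(g)}{X}\) are shown to verify Neeb's condition via Lemma~\ref{lemma:rho-Phi-c-reln} together with the relation \((\rho_X)_g = (\lambda_{\Ad_{g^{-1}}X})_g\), and the conclusion is obtained from Corollary~\ref{cor:covering} applied to the hamiltonian \(\widehat G\)-space \((M,\omega,\widehat\mu)\). The only divergence is in how equivariance of \(\widehat\mu\) is secured: you identify the coadjoint action of \(\widehat G\) with \eqref{eq:hat-g*-theta-action} via Lemma~\ref{lemma:theta-cocycle} and Proposition~\ref{prop:symp-cocycles-unique} (precisely the viewpoint the paper develops afterwards in Section~\ref{sec:interpretation-G-action}), whereas the paper's proof more economically checks that \(\widehat\varphi\) is a comoment map for the pulled-back action (using \(\xi_{(X,u)}=\xi_X\)) and, since it is a Lie algebra homomorphism by \eqref{eq:hat-phi-hom} and \(\widehat G\) is connected, concludes via Proposition~\ref{prop:hamiltonian-action}.
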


\begin{proof}
  Recall the functions \(\Phi_X\in C^\infty(G)\) given by
  \(\Phi_X(g)=-\pair{\theta(g)}{X}\) which we first saw in
  Lemma~\ref{lemma:rho-Phi-c-reln}. We will show that these satisfy 
  the condition~\eqref{eq:neeb} in Theorem~\ref{thm:neeb}. Recall
  that the right-invariant vector fields span \(\eX(G)\) as a
  \(C^\infty(G)\)-module, so it is sufficient to show that
  \begin{equation}
    \Omega(\rho_X,\rho_Y) = \eL_{\rho_Y}\Phi_X.
  \end{equation}
  By Lemma~\ref{lemma:rho-Phi-c-reln}, we have
  \(\eL_{\rho_Y}\Phi_X(g)=(\Ad^*_g c)(X,Y)\); on the other hand,
  since \(\left( \rho_X \right)_g=\left(\lambda_{\Ad_{g^{-1}}
      X}\right)_g\) at \(g\in G\),
  \begin{equation}
    \Omega(\rho_X,\rho_Y)(g)
    = \Omega(\lambda_{\Ad_{g^{-1}}X},\lambda_{\Ad_{g^{-1}}Y})(g)
    = c(\Ad_{g^{-1}}X,\Ad_{g^{-1}}Y)
    = (\Ad^*_g c)(X,Y).
  \end{equation}
  So we indeed have \eqref{eq:neeb}, and by Theorem~\ref{thm:neeb},
  there exists a one-dimensional central extension \(\widehat{G}\) of
  \(G\) integrating the central extension \(\widehat\fg\) of \(\fg\)
  defined by \(c\).  The action of \(G\) on \(M\) pulls back to
  \(\widehat{G}\) along the quotient map \(\pi:\widehat{G}\to G\); for
  \(g\in\widehat{G}\) and \(p\in M\), we have \(g\cdot p=\pi(g)\cdot p\), and
  clearly this action is symplectic and transitive.
  
  Let us write \(\xi_{(X,u)}\) for the fundamental vector field on \(M\) 
  of the \(\widehat{G}\)-action generated by 
  \((X,u)\in\widehat\fg=\fg\oplus\RR\). But then, since \((0,u)\in\ker\pi_*\),
  we have \(\xi_{(X,u)}=\xi_X\), where the latter is 
  the fundamental vector field for the \(G\)-action as before. Now recall the
  extended comoment map \(\widehat\varphi:\widehat\fg\to M\) given by
  \(\widehat\varphi_{(X,u)}=\varphi_X+u\). We have
  \begin{equation}
    d\widehat\varphi_{(X,u)} = d\varphi_X = \imath_{\xi_X}\omega = \imath_{\xi_{(X,u)}}\omega,
  \end{equation}
  so \(\widehat\varphi\) is indeed a comoment map. We already showed 
  (see \eqref{eq:hat-phi-hom}) that it is a Lie algebra homomorphism,
  so (since \(\widehat{G}\) is connected) we conclude that 
  \((M,\omega,\widehat\mu)\) is a hamiltonian \(\widehat{G}\)-space.
  Corollary~\ref{cor:covering}, with \(\widehat{G}\) now playing the
  rôle of \(G\), then shows that \(\widehat\mu: M \to \widehat\bigO\)
  is a covering of a coadjoint orbit \(\widehat\bigO\) and, since \(M\)
  is simply-connected, it is the universal cover.
\end{proof}

\subsection{Interpretation in terms of \(G\)-actions}
\label{sec:interpretation-G-action}

In the preceding proof, after proving the existence of the central
extension \(\widehat{G}\) of \(G\), we passed from the action of the
original group \(G\) to the action of \(\widehat{G}\), which we showed
was hamiltonian, and thus we were able to use results about hamiltonian
\(\widehat{G}\)-spaces for a concise proof. It is possible, however, 
to instead work entirely with \(G\)-spaces by factoring the adjoint and
coadjoint actions of \(\widehat{G}\) through \(G\) as in 
Section~\ref{sec:adj-coadj}, and it is instructive to see how the
situation looks from this point of view, especially since it makes
the rôle of the symplectic cocycle \(\theta\) more explicit.

Recall that, by construction, \(\widehat\fg=\fg\oplus\RR\) as vector
spaces with \(c\) as the Chevalley--Eilenberg cocycle associated to
the splitting, and by equation~\eqref{eq:hat-coadjoint-theta} the 
factored coadjoint action is given by
\begin{equation}
  (\alpha,\zeta) \mapsto (\Ad^*_g\alpha - \zeta\theta(g),\zeta).
\end{equation}
The symplectic cocycle \(\theta\) appearing here is not a priori the
same as the one arising from the moment map \(\mu\), but since since
both integrate \(c\), they are in fact the same by
Proposition~\ref{prop:symp-cocycles-unique}. This action is thus
identical to the one defined by
equation~\eqref{eq:hat-g*-theta-action}, so the extended moment map
\(\widehat\mu\) is \(G\)-equivariant with respect to \(\widehat\Ad^*\)
by the calculation \eqref{eq:hat-mu-equi}. Similarly, the action
defined by \eqref{eq:hat-g-theta-action} is just the factored adjoint
action. This answers the questions posed at the beginning of
Section~\ref{sec:action-to-extension} explicitly.

Let us now recall the proof of
Proposition~\ref{prop:moment-map-is-morphism} and borrow some of its
notation, albeit with some differences since \((M,\omega,\mu)\) is not
a hamiltonian \(G\)-space here. We fix an arbitrary point \(o\in M\)
and let \(\lambda=\widehat\mu(o)=(\mu(o),1)\). Since the action of
\(G\) is transitive on \(M\), any point in \(M\) can be expressed as
\(g\cdot o\) for some \(g\in G\), and we have
\begin{equation}
  \widehat\mu(g\cdot o) = \widehat\Ad^*_g\widehat\mu(o) =
  \widehat\Ad^*_{g}\lambda,
\end{equation}
so in particular the image of \(\widehat\mu\), denoted by
\(\widehat\bigO\) above, is the orbit of \(\lambda\) under the
factored coadjoint action. Let \(a_o:G\to M\) and
\(a_\lambda:G\to\widehat\bigO\) be the orbit maps \(g\mapsto g\cdot o\)
and \(g\mapsto\widehat\Ad^*_g\lambda\) respectively. The
\(G\)-equivariance of \(\widehat\mu\) means that the following
triangle of \(G\)-equivariant maps commutes:
\begin{equation}
  \label{eq:M-G-O-mu-triangle-too}
  \begin{tikzcd}
    & G \arrow[dl,"a_o"'] \arrow[dr,"a_\lambda"] &\\
    M \arrow[rr,"\widehat\mu"] & & \widehat\bigO.
  \end{tikzcd}
\end{equation}
Let us denote the fundamental vector fields of the \(G\)-action on
\(\widehat\bigO\) by \(\zeta_X\). We have \(\left(\xi_X\right)_o = (a_o)_* X\) and
\(\left(\zeta_X\right)_\lambda = (a_\lambda)_* X\) for each \(X\in\fg\), and these
values span the tangent spaces of \(M\) and \(\widehat\bigO\) respectively. 
Similarly to the computations in the proof of
Proposition~\ref{prop:moment-map-is-morphism}, we can compute the values at
\(o\) of the pullback of the Kirillov--Kostant--Souriau structure
\(\widehat\omega_{\mathrm{KKS}}\) on \(\widehat\bigO\) and of the symplectic form \(\omega\)
on \(M\):
\begin{align}
  \label{eq:KKS-bigOhat}
  (\widehat\mu^*\widehat\omega_{\mathrm{KKS}})(\xi_X,\xi_Y)(o)
  &=\widehat\omega_{\mathrm{KKS}}(\zeta_{X},\zeta_{Y})(\lambda)
    = \pair\lambda{\comm{(X,0)}{(Y,0)}}
    = \pair{\mu(o)}{\comm{X}{Y}} + c(X,Y),\\
  \omega(\xi_X,\xi_Y)(o)
  \label{eq:s}
  &= \acomm{\varphi_X}{\varphi_Y}(o)
    = \varphi_{\comm{X}{Y}}(o) + c(X,Y)
    = \pair{\mu(o)}{\comm{X}{Y}} + c(X,Y),
\end{align}
which shows that \(\widehat\mu:M\to\widehat\bigO\) is a symplectomorphism
(as usual, it is sufficient to compute at one point by \(G\)-invariance and
transitivity), and thus a homomorphism of homogeneous symplectic
\(G\)-spaces.

Since \(\widehat\mu^*\widehat\omega_{\mathrm{KKS}}=\omega\) on \(M\), commutativity
of the diagram \eqref{eq:M-G-O-mu-triangle-too} implies that we also have
\(a_o^*\omega=a_\lambda^*\widehat\omega_{\mathrm{KKS}}\) on \(G\).
One might ask whether the orbit maps are also symplectic in the sense 
that this invariant 2-form is also equal to \(\Omega\). By \(G\)-invariance 
and since \(G\) acts transitively on itself, it suffices to compute at the
identity. We have
\begin{equation}
\begin{split}
	(\Omega-a_o^*\omega)(X,Y)(e)
	&= c(X,Y) - \omega(\xi_X,\xi_Y)(o)\\
	&= c(X,Y) - \pair{\mu(o)}{\comm{X}{Y}} - c(X,Y) \\
	&= -\pair{\mu(o)}{\comm{X}{Y}}.
\end{split}
\end{equation}
Thus \((\Omega-a_o^*\omega)_e=\partial_{\mathrm{CE}}\mu(o)\),
so the orbit maps are not symplectic on the nose but we have
\([\Omega]=[a_o^*\omega]\) in de Rham cohomology. However, since we have
freedom to redefine \(\mu\) by addition of an element in \(\fg^*\), we
can without loss of generality assume that \(\mu(o)=0\), and then the
orbit maps are indeed symplectic. Note that with this assumption,
we also have \(\lambda=(0,1)\in\widehat\fg^*\); this is the element
denoted \(\lambda\) in \cite{MR1424633}.

\subsection{Interpretation in terms of the affine \(G\)-action}
\label{sec:interpretation-affine-G-action}

Recall the affine action \(\rho:G\mapsto\Aff(\fg^*)\) given by 
\(\rho(g)\alpha=\Ad^*_g\alpha - \theta(g)\) 
(equations \eqref{eq:theta-aff-act} and \eqref{eq:aff-act-dual}). We noted
in previous remarks that there is a \(G\)-equivariant embedding of \(\fg^*\)
in \(\widehat\fg^*=\fg^*\oplus\RR\) as the hyperplane \((\fg,1)\), with the
\(\rho\) action on \(\fg\) and the factored coadjoint action on
\(\widehat\fg^*\); \(\widehat\Ad^*_g(\alpha,1) = (\rho(g)\alpha,1)\).
We now interpret Theorem~\ref{thm:homo-sym-ext} in terms of this action.

Let \(\alpha\in\fg\) and let \(\bigO^{\text{aff}}_\alpha\) be the
orbit of \(\alpha\in\fg\) under the affine action \(\rho\). Completely
analogous to the KKS structure on a coadjoint orbit, there is a
natural invariant symplectic structure on this orbit. Let
\(G^{\text{aff}}_\alpha\) be the stabiliser of \(\alpha\) under
\(\rho\) and let \(\fg^{\text{aff}}_\alpha\) be its Lie algebra, so
\begin{equation}\label{eq:aff-stab-group}
  G^{\text{aff}}_\alpha = \left\{g\in G ~ \middle |~ \Ad^*_g\alpha - \theta(g) = \alpha \right\}
\end{equation}
and
\begin{equation}\label{eq:aff-stab-alg}
  \fg^{\text{aff}}_\alpha = \left\{X\in \fg~ \middle |~ \pair{\alpha}{\comm{X}{Y}} + c(X,Y) = 0\quad \forall Y\in\fg \right\}.
\end{equation}
Since \(G^{\text{aff}}_\alpha\) fixes \(\alpha\), the pushforward by the 
action of an element \(g\in G^{\text{aff}}_\alpha\) preserves 
\(T_\alpha \bigO^{\text{aff}}_\alpha\), hence we obtain an action of 
\(G^{\text{aff}}_\alpha\) on  \(T_\alpha \bigO^{\text{aff}}_\alpha\).
Denoting by \(\zeta_X\) the fundamental vector field on 
\(\bigO^{\text{aff}}_\alpha\) associated to \(X\in\fg\), there is a linear
map \(\fg\to T_\alpha\bigO^{\text{aff}}_\alpha\) given by
\(X\mapsto(\zeta_X)_\alpha\), the kernel of which is \(\fg^{\text{aff}}_\alpha\).
Since \(\rho(g)_*\zeta_X=\zeta_{\Ad_gX}\) for all \(g\in G\), in particular
this map is \(G^{\text{aff}}_\alpha\)-equivariant, there is a short exact
sequence of \(G^{\text{aff}}_\alpha\)-modules
\begin{equation}
  \begin{tikzcd}
    0 \arrow[r] & \fg^{\text{aff}}_\alpha \arrow[r] & \fg \arrow[r] &
    T_\alpha\bigO^{\text{aff}}_\alpha \arrow[r] & 0.
  \end{tikzcd}
\end{equation}
There exists a \(G^{\text{aff}}_\alpha\)-invariant 2-form on \(\fg\) given
by \((X,Y)\mapsto \pair{\alpha}{\comm{X}{Y}} + c(X,Y)\) which by
\eqref{eq:aff-stab-alg} descends to a nondegenerate,
\(G^{\text{aff}}_\alpha\)-invariant 2-form on 
\(T_\alpha\bigO^{\text{aff}}_\alpha\cong \fg/\fg^{\text{aff}}_\alpha\),
and by the holonomy principle this in turn extends to a nondegenerate,
\(G\)-invariant 2-form \(\omega_{\mathrm{aff}}\) on \(\bigO^{\text{aff}}_\alpha\).
One can then verify that \(d\omega_{\mathrm{aff}}=0\), so \(\omega_{\mathrm{aff}}\)
is a symplectic form. One can also check that the inclusion
\(i:\bigO_\alpha^\text{aff}\hookrightarrow\fg^*\) is a moment map, although
unlike in the case of coadjoint orbits, it is not equivariant (with the 
coadjoint action on \(\fg^*\)), so
\((\bigO_\alpha^\text{aff},\omega_{\mathrm{aff}},i)\) is not a hamiltonian
\(G\)-space.

The embedding \(\fg^*\hookrightarrow\widehat\fg^*=\fg^*\oplus\RR\) restricts
to an isomorphism of homogeneous \(G\)-spaces
\(\bigO^{\text{aff}}_\alpha\to\widehat\bigO_{(\alpha,1)}\), where the latter
space is the coadjoint orbit of \((\alpha,1)\). We can compute
\begin{equation}
	(\widehat\omega_{\mathrm{KKS}})(\zeta_X,\zeta_Y)(\alpha,1)
		= \pair{(\alpha,1)}{\comm{(X,0)}{(Y,0)}}
		= \pair{(\alpha,1)}{(\comm{X}{Y},c(X,Y))}
		= \pair{\alpha}{\comm{X}{Y}} + c(X,Y)
\end{equation}
so the isomorphism is also a symplectomorphism.

Setting \(\alpha=\mu(o)\), we have \((\alpha,1)=\lambda=\widehat\mu(o)\), 
so we see that the affine orbit
\(\bigO^{\text{aff}}_\alpha\) is isomorphic to the coadjoint orbit of
\(\lambda\) as a homogeneous symplectic \(G\)-space. We can thus
interpret Theorem~\ref{thm:homo-sym-ext} as saying that \(M\) covers
an \emph{affine} orbit of \(G\) as a homogeneous symplectic
\(G\)-space.

\subsection{Invariant presymplectic structures on groups, symplectic cocycles and central extensions}
\label{sec:group-presymp}

The lynchpin in the proof of Theorem~\ref{thm:homo-sym-ext} is the
symplectic group cocycle \(\theta\) arising from the moment map for
the action of \(G\) on \(M\). We also saw in
Section~\ref{sec:adj-coadj} that factoring the coadjoint action of a
central extension of \(G\) through \(G\) itself gives rise to a
symplectic group cocycle (after splitting the central extension of
algebras), and that this was the same cocycle when the central
extension was defined by \(c=\widetilde{d_e\theta}\). Forgetting the
action of \(G\) on \(M\), what we have implicitly shown is the
following.

\begin{theorem}\label{thm:group-ext-symp-cocycle}
  Let \(G\) be a connected Lie group and
  \begin{equation}\label{eq:lie-alg-ext}
    \begin{tikzcd}
      0 \arrow[r] & \RR \arrow[r] & \widehat{\fg} \arrow[r] &
      \fg \arrow[r] & 0
    \end{tikzcd}
  \end{equation}
  a central extension of the Lie algebra \(\fg\) of \(G\). This
  integrates to a one-dimensional central extension of the group \(G\)
  \begin{equation}\label{eq:lie-group-ext}
    \begin{tikzcd}
      1 \arrow[r] & K \arrow[r] & \widehat{G} \arrow[r] &
      G \arrow[r] & 1
    \end{tikzcd}
  \end{equation}
  if and only if there exists a symplectic group cocycle
  \(\theta\in C^1(G;\fg^*)\) for which the cohomology class of
  the Chevalley--Eilenberg 2-cocycle \(c\) of \(\fg\) given by
  \(c(X,Y)=\pair{d_e\theta(X)}{Y}\) corresponds to the extension
  \eqref{eq:lie-alg-ext}.
\end{theorem}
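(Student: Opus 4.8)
The plan is to prove the two implications separately. The ``only if'' direction is essentially repackaged from Section~\ref{sec:adj-coadj}, whereas the ``if'' direction carries the real content and rests on Neeb's criterion (Theorem~\ref{thm:neeb}).

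\emph{The forward direction.} Suppose the algebra extension \eqref{eq:lie-alg-ext} integrates to a group extension \eqref{eq:lie-group-ext}. Here I would simply invoke the analysis of Section~\ref{sec:adj-coadj}: choosing a vector-space splitting \(\widehat{\fg}=\fg\oplus\RR\) for which the given cocycle \(c\) presents the extension, the factored coadjoint action of \(\widehat{G}\) on \(\widehat{\fg}^*\) takes the form \eqref{eq:hat-coadjoint-theta}, namely \(\widehat\Ad^*_g(\alpha,\zeta)=(\Ad^*_g\alpha-\zeta\theta(g),\zeta)\) for some map \(\theta:G\to\fg^*\). Lemma~\ref{lemma:theta-cocycle} then tells us that \(\theta\) is a symplectic group cocycle whose associated Chevalley--Eilenberg cocycle is exactly \(c\), and hence lies in the cohomology class corresponding to \eqref{eq:lie-alg-ext}. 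This produces the required \(\theta\).

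\emph{The reverse direction.} Conversely, suppose we are given a symplectic group cocycle \(\theta\in C^1(G;\fg^*)\) whose Chevalley--Eilenberg cocycle \(c(X,Y)=\pair{d_e\theta(X)}{Y}\) has cohomology class corresponding to \eqref{eq:lie-alg-ext}. Since a one-dimensional central extension of \(\fg\) is determined up to isomorphism by \([c]\), and since integrability of a Lie algebra extension to a group extension depends only on its isomorphism class, I would take \(c=\widetilde{d_e\theta}\) as the cocycle presenting \(\widehat{\fg}\) without loss of generality. I would then define \(\Phi_X\in C^\infty(G)\) by \(\Phi_X(g)=-\pair{\theta(g)}{X}\), as in Lemma~\ref{lemma:rho-Phi-c-reln}, and verify the hypothesis \eqref{eq:neeb} of Neeb's theorem. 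The crucial computation, already performed inside the proof of Theorem~\ref{thm:homo-sym-ext}, is that \(\eL_{\rho_Y}\Phi_X(g)=(\Ad^*_gc)(X,Y)\) by Lemma~\ref{lemma:rho-Phi-c-reln}, while on the other hand \(\Omega(\rho_X,\rho_Y)(g)=(\Ad^*_gc)(X,Y)\) using \((\rho_X)_g=(\lambda_{\Ad_{g^{-1}}X})_g\) together with the defining relation \eqref{eq:Omega-def}. As the right-invariant vector fields span \(T_gG\) at every point, these two identities combine to give \(\imath_{\rho_X}\Omega=d\Phi_X\). Theorem~\ref{thm:neeb} then furnishes the one-dimensional central extension \(\widehat{G}\) of \(G\) integrating \(\widehat{\fg}\), completing the proof.

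\emph{Main obstacle.} The genuinely nontrivial ingredient is Neeb's criterion itself, which converts the abstract integrability question into the concrete, checkable condition that the left-invariant presymplectic form \(\Omega\) admit a ``moment map'' for the left action of \(G\) on itself. Granting that, the remaining work is only to recognise \(\Phi_X(g)=-\pair{\theta(g)}{X}\) as the sought-after potential and to match \(\eL_{\rho_Y}\Phi_X\) with \(\Omega(\rho_X,\rho_Y)\) through their common value \((\Ad^*_gc)(X,Y)\). The one point deserving a little care is the reduction, in the reverse direction, to the case where \(c\) itself rather than a cohomologous representative presents the extension; this is harmless because cohomologous Chevalley--Eilenberg cocycles present isomorphic Lie algebra extensions, and integrability to a group extension is an isomorphism-invariant property.
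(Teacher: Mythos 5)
Your proposal is correct, and your reverse (``if'') direction coincides with the paper's: the same functions \(\Phi_X(g)=-\pair{\theta(g)}{X}\), the same matching of \(\eL_{\rho_Y}\Phi_X\) and \(\Omega(\rho_X,\rho_Y)\) through their common value \((\Ad^*_gc)(X,Y)\), and the same appeal to Theorem~\ref{thm:neeb}; your explicit remark that one may take \(c=\widetilde{d_e\theta}\) to present the extension (since cohomologous cocycles give isomorphic extensions and integrability is isomorphism-invariant) is a point the paper leaves implicit when it says ``choose a splitting \ldots and fix notation as above.'' Where you genuinely diverge is the forward (``only if'') direction. The paper deduces it from Neeb's criterion run backwards: the existence of \(\widehat{G}\) gives functions \(\Phi_X\) satisfying \eqref{eq:neeb}, and the moment-map analysis on the presymplectic homogeneous space \((G,\Omega)\) carried out just before the proof (normalising \(\mu(e)=0\) so that \(\theta=-\mu\) is a cocycle integrating \(c\)) produces the symplectic cocycle. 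You instead bypass Neeb entirely and read \(\theta\) off the factored coadjoint action \eqref{eq:hat-coadjoint-theta} of \(\widehat{G}\), invoking Lemma~\ref{lemma:theta-cocycle} from Section~\ref{sec:adj-coadj}. Both are valid; indeed the paper's remark preceding the theorem acknowledges that this direction is ``implicitly shown'' by exactly the route you take, and its closing note (via Proposition~\ref{prop:symp-cocycles-unique}) identifies your \(\theta\) with the one its proof constructs. Your route is arguably more direct and only uses the ``if'' half of Neeb's theorem, at the cost of needing the structure theory of the coadjoint representation of the extension; the paper's route stays inside its unified presymplectic-moment-map framework, which is what it wants to emphasise in that section.
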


We saw at the end of Section~\ref{sec:symp-cocycle} that the map
assigning a Chevalley--Eilenberg cocycle \(c\) to a symplectic group
cocycle \(\theta\) induces an injection
\(H^1_{\mathrm{symp}}(G,\fg^*)\hookrightarrow H^2(\fg)\). Recalling
that \(H^2(\fg)\) classifies the one-dimensional central extensions
of \(G\), we now see that \(H^1_{\mathrm{symp}}(G,\fg^*)\) (or more
precisely its image in \(H^2(\fg)\)) classifies the \emph{integrable}
extensions.

We will now shift to a slightly different perspective which will serve
to unify a few different themes already present in this work and prove
the above theorem more concisely. Recall that, given a Lie algebra
extension \eqref{eq:lie-alg-ext}, choosing a splitting (as vector
spaces) yields a Chevalley--Eilenberg cocycle \(c\in\Wedge^2 \fg\) and
a closed, left-invariant two-form \(\Omega\) on \(G\) corresponding to
\(c\). A different choice of splitting gives rise a cocycle and 2-form
in the same cohomology classes \([c]\) and \([\Omega]\), respectively.

We now return to the observation following Theorem~\ref{thm:neeb} and
emphasise that \(\Omega\) is a left-invariant closed 2-form on $G$;
that is, a left-invariant presymplectic structure, making \(G\) into a
presymplectic homogeneous space for itself. The fundamental vector
fields of this action are the \emph{right}-invariant vector fields
\(\rho_X\). With this point of view, let us compare the equations
\eqref{eq:neeb} and \eqref{eq:symp-ham-vfs} and note that we can
perform the same analysis on \((G,\Omega)\) as we did on
\((M,\omega)\), \emph{mutatis mutandis}, the only major difference
being that there is no Poisson bracket on \(G\) in general since
\(\Omega\) is possibly degenerate. That is, where they exist, we can
choose the \(\Phi_X\) such that the map \(\Phi: \fg\to C^\infty(G)\)
is linear and consider it as a comoment map with corresponding moment
map \(\mu:G\to\fg^*\), both unique up to addition of an element in
\(\fg^*\). Continuing with the analogy, we discover that (as in
Lemmas~\ref{lemma:symp-theta-cocycle} and
\ref{lemma:Dtheta-symplectic-cocycle}) there exists a symplectic group
cocycle \(\theta:G\to\fg^*\) satisfying
\begin{equation}
	\theta(g_1) = \Ad_{g_1}^*\mu(g_2) - \mu(g_1g_2)
\end{equation}
for all \(g_1, g_2\in G\) (in particular the RHS does not depend on
\(g_2\)). Note, however, that since \(\mu\) is defined only up to an
element of \(\fg^*\), we may assume without loss of generality that
\(\mu(e)=0\), in which case we find
\begin{equation}
	\theta(g) = \Ad_{g}^*\mu(e) - \mu(g) = -\mu(g);
\end{equation}
in particular, the moment map \(\mu\) is itself a cocycle and we have
\(\Phi_X(g)=-\pair{\theta(g)}{X}\). It is simple to check now that the
Chevalley--Eilenberg cocycle obtained by taking the derivative of
\(\theta\) is the cocycle \(c\);
\begin{equation}
  \widetilde{d_e\theta}(X,Y) := \pair{d_e\theta(X)}{Y}
  = -\dv{t} \Phi_Y(\exp(tX)) \eval_{t=0}\\
  = -(\eL_{\rho_X} \Phi_Y)(e)
  = \Omega(\rho_X,\rho_Y)(e)
  = c(X,Y).
\end{equation}
Notice that if we do not assume that \(\mu(e)=0\), the claim is true
up to a coboundary, i.e., \(\qty[\widetilde{d_e\theta}]=[c]\).

Recalling Proposition~\ref{prop:hamiltonian-action}, one might ask
what it means for the action of \(G\) on itself to be ``hamiltonian''
with respect to \(\Omega\). Note that, since we do not have a Poisson
bracket on \(G\), we cannot ask if \(\Phi\) is a Lie algebra
homomorphism as we did with the comoment map of
Section~\ref{sec:symp-action}; we may however analogously ask if
\(\Phi_{\comm{X}{Y}} = \Omega(\rho_X,\rho_Y)\) for all \(X,Y\in
\fg\). The following result, along with
Proposition~\ref{prop:symp-cocycles-triv}, tells us that the action of
\(G\) on \((G,\Omega)\) is hamiltonian if and only if the Lie algebra
extension is trivial.

\begin{proposition}
  Where it exists, the comoment map \(\Phi\) can be chosen such that
  \(\Phi_{\comm{X}{Y}} = \Omega(\rho_X,\rho_Y)\) for all \(X,Y\in
  \fg\) if and only if \([c]=0\).
\end{proposition}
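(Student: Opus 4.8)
The plan is to transcribe, \emph{mutatis mutandis}, the argument establishing the equivalence \eqref{item:phi-hom}\(\iff\)\eqref{item:c-triv} in Proposition~\ref{prop:hamiltonian-action}, with \(\Omega(\rho_X,\rho_Y)\) playing the rôle that the Poisson bracket \(\acomm{\varphi_X}{\varphi_Y}\) played there. The crux is the observation that, for each \(X,Y\in\fg\), the function
\[
  \Phi_{\comm{X}{Y}} - \Omega(\rho_X,\rho_Y)
\]
is \emph{constant} on \(G\). Granting this, the stated identity holds for a suitable choice of \(\Phi\) exactly when this constant can be made to vanish for all \(X,Y\), and I will show that this is possible precisely when \([c]=0\).

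First I would establish that \(\eL_{\rho_X}\Omega = 0\) for all \(X\in\fg\): the flow of the right-invariant field \(\rho_X\) is left translation by \(\exp(tX)\), under which the left-invariant form \(\Omega\) is invariant. Combining this with the defining relation \(\imath_{\rho_Y}\Omega = d\Phi_Y\), the anti-homomorphism property \(\comm{\rho_X}{\rho_Y} = -\rho_{\comm{X}{Y}}\) of right-invariant fields, and the identity \(\eL_{\rho_X}\imath_{\rho_Y} - \imath_{\rho_Y}\eL_{\rho_X} = \imath_{\comm{\rho_X}{\rho_Y}}\) applied to \(\Omega\), I obtain
\[
  d\qty(\eL_{\rho_X}\Phi_Y)
    = \eL_{\rho_X}\imath_{\rho_Y}\Omega
    = \imath_{\comm{\rho_X}{\rho_Y}}\Omega
    = -\imath_{\rho_{\comm{X}{Y}}}\Omega
    = -d\Phi_{\comm{X}{Y}}.
\]
Since \(\eL_{\rho_X}\Phi_Y = (d\Phi_Y)(\rho_X) = \Omega(\rho_Y,\rho_X) = -\Omega(\rho_X,\rho_Y)\), this says exactly that \(d\qty(\Phi_{\comm{X}{Y}} - \Omega(\rho_X,\rho_Y)) = 0\), so the function in question is constant because \(G\) is connected.

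Next I would evaluate this constant at the identity. Since \(\left(\rho_X\right)_e = \left(\lambda_X\right)_e\), equation~\eqref{eq:Omega-def} gives \(\Omega(\rho_X,\rho_Y)(e) = c(X,Y)\), so the constant equals \(\Phi_{\comm{X}{Y}}(e) - c(X,Y)\). Recalling that \(\Phi\) is determined only up to the addition of some \(\beta\in\fg^*\), and that with the normalisation \(\mu(e)=0\) one has \(\Phi_X(g) = -\pair{\theta(g)}{X} + \pair{\beta}{X}\) with \(\theta(e)=0\), so that \(\Phi_{\comm{X}{Y}}(e) = \pair{\beta}{\comm{X}{Y}}\), the condition \(\Phi_{\comm{X}{Y}} = \Omega(\rho_X,\rho_Y)\) becomes \(\pair{\beta}{\comm{X}{Y}} = c(X,Y)\) for all \(X,Y\); that is, \(c = -\partial_{\mathrm{CE}}\beta\). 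Hence such a \(\beta\) can be chosen if and only if \(c\) is a Chevalley--Eilenberg coboundary, i.e. \([c]=0\), which is the claim.

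The only real subtlety, and the step I would take most care over, is the bookkeeping of the freedom in \(\Phi\) and the sign conventions: one must track precisely the adjusting element \(\beta\in\fg^*\), the anti-homomorphism sign in \(\comm{\rho_X}{\rho_Y} = -\rho_{\comm{X}{Y}}\), and the Chevalley--Eilenberg convention \((\partial_{\mathrm{CE}}\beta)(X,Y) = -\pair{\beta}{\comm{X}{Y}}\), so that the final identity \(c = -\partial_{\mathrm{CE}}\beta\) emerges with the correct sign. Everything else is a direct transcription of the symplectic computation of Section~\ref{sec:moment-map-cocycle}, with \(\Omega\) (possibly degenerate) in place of \(\omega\) and the right-invariant fields \(\rho_X\) in place of the fundamental fields \(\xi_X\).
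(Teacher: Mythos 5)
Your proof is correct, and it reaches the key identity by a genuinely different (and more self-contained) route than the paper. The paper's own proof is essentially a one-liner given the machinery already in place: since the normalised comoment satisfies \(\Phi_X(g)=-\pair{\theta(g)}{X}\) and \(\Omega(\rho_X,\rho_Y)(g)=(\Ad^*_gc)(X,Y)\), the cocycle identity of Lemma~\ref{lemma:symp-cocycles} immediately gives \(\Omega(\rho_X,\rho_Y)(g)-\Phi_{\comm{X}{Y}}(g)=c(X,Y)\) identically, after which one shifts \(\Phi\) by \(\alpha\in\fg^*\) exactly as you do with \(\beta\). You instead prove the constancy of \(\Phi_{\comm{X}{Y}}-\Omega(\rho_X,\rho_Y)\) from scratch by Cartan calculus — using that \(\eL_{\rho_X}\Omega=0\) (the flow of \(\rho_X\) is left translation, under which the left-invariant \(\Omega\) is invariant), the identity \(\eL_{\rho_X}\imath_{\rho_Y}-\imath_{\rho_Y}\eL_{\rho_X}=\imath_{\comm{\rho_X}{\rho_Y}}\), and \(\comm{\rho_X}{\rho_Y}=-\rho_{\comm{X}{Y}}\) — and then pin down the constant by evaluating at \(e\), where \((\rho_X)_e=(\lambda_X)_e\) and \eqref{eq:Omega-def} give \(\Omega(\rho_X,\rho_Y)(e)=c(X,Y)\). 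This is the group-level analogue of the computation \eqref{eq:c-phi-rel}, but carried out purely with differential forms, so it never needs the symplectic cocycle \(\theta\) or Lemma~\ref{lemma:symp-cocycles} at all; indeed your one appeal to \(\theta\) in the final step is dispensable, since all you need there is that \(X\mapsto\Phi_X(e)\) defines an element of \(\fg^*\) which the residual freedom \(\Phi\mapsto\Phi+\beta\) shifts arbitrarily. What the paper's route buys is brevity and the sharper statement that for the normalised \(\Phi\) the discrepancy is literally \(c\) (not merely cohomologous to it); what your route buys is independence from Section~\ref{sec:symp-cocycle}, and it is arguably cleaner on the point the paper elides, namely that an arbitrary admissible choice of \(\Phi\) only forces \([c]=0\) rather than \(c=0\). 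Your sign bookkeeping (\(c=-\partial_{\mathrm{CE}}\beta\), matching the convention \((\partial_{\mathrm{CE}}\beta)(X,Y)=-\pair{\beta}{\comm{X}{Y}}\)) is consistent with the paper's conventions throughout.
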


\begin{proof}
	By the analogous calculation to \eqref{eq:c-phi-rel}
	(or by Lemma~\ref{lemma:symp-cocycles}), we have
	\begin{equation}
		c(X,Y)=\Omega(\rho_X,\rho_Y)(g)-\Phi_{\comm{X}{Y}}=(\Ad^*_gc)(X,Y)-\Phi_{\comm{X}{Y}}(g).
	\end{equation}
	Clearly, if \(\Phi_{\comm{X}{Y}} = \Omega(\rho_X,\rho_Y)\) then \(c(X,Y)=0\). Conversely, if \(c=\partial_{\mathrm{CE}}\alpha\) for some \(\alpha\in\fg^*\) then we have
	\begin{equation}
		\Omega(\rho_X,\rho_Y) - \Phi_{\comm{X}{Y}}
		= c(X,Y) 
		= - \alpha(\comm{X}{Y}).
	\end{equation}
	We see that \(\Phi'\) defined by \(\Phi_X'(g)=\Phi_X(g)-\alpha(X)\)  satisfies both \(\imath_{\rho_X}\Omega=d\Phi'_X\) and \(\Phi'_{\comm{X}{Y}}=\Omega(\rho_X,\rho_Y)\).
\end{proof}

\begin{proof}[Proof of Theorem~\ref{thm:group-ext-symp-cocycle}]
  Let us choose a splitting of the algebra extension and fix notation
  as above. The calculations in the first part of the proof of
  Theorem~\ref{thm:homo-sym-ext} show that if \(\theta: G\to\fg^*\) is
  a symplectic group cocycle satisfying
  \(c(X,Y)=\pair{d_e\theta(X)}{Y}\), then the functions \(\Phi_X\in
  C^\infty\) given by \(\Phi_X(g)=-\pair{\theta(g)}{X}\) satisfy
  equation~\eqref{eq:neeb} of Theorem~\ref{thm:neeb}, and so by that
  theorem there exists a one-dimensional central extension
  \(\widehat{G}\) of \(G\) integrating the extension of Lie algebras.
  
  On the other hand, suppose that the group extension
  \eqref{eq:lie-group-ext} exists. Then by Theorem~\ref{thm:neeb}, for
  each element \(X\in\fg\) there exists a map \(\Phi_X\in C^\infty\)
  satisfying equation \eqref{eq:neeb}. We argued above that without
  loss of generality, these maps can be chosen such that
  \(\Phi_X(g)=-\pair{\theta(g)}{X}\) for some symplectic group cocycle
  \(\theta\) integrating \(c\).
\end{proof}
 
Note that by Proposition~\ref{prop:symp-cocycles-unique}, the symplectic
cocycle \(\theta\) appearing in the second part of the proof above is the
same as the one arising from the factored coadjoint action 
\(\widehat\Ad^*\) in equation~\eqref{eq:hat-coadjoint-theta}.
  
\subsection{Exact homogeneous symplectic spaces}
\label{sec:exact-symp}

Let \((M,\omega)\) be a homogeneous symplectic \(G\)-space for which 
\(\omega=d\eta\) for some \(\eta\in\Omega^1(M)\). In such a case, we call
\(\eta\) the \emph{symplectic potential}, and the pair \((M,\eta)\) is 
an \emph{exact homogeneous symplectic \(G\)-space}. We will now show
that a central extension of \(G\) satisfying the conclusion of
Theorem~\ref{thm:homo-sym-ext} can be constructed directly without appeal
to Neeb's Theorem~\ref{thm:neeb}, and that it takes a particularly simple form.

For an exact homogeneous symplectic $G$-space, the \(G\)-invariance
condition \(g^*\omega=\omega\) can be rewritten as
\begin{equation}
  d\qty(g^*\eta-\eta) = 0.
\end{equation}
If the one-form \(g^*\eta-\eta\) is exact for all \(g\in G\) (in
particular if \(M\) is simply-connected), for each \(g\in G\) there 
exists a function \(\Psi_g\in C^\infty(M)\) such that
\begin{equation}\label{eq:Psi-def}
  g^*\eta-\eta = d\Psi_g.
\end{equation}
We can assume without loss of generality that \(\Psi_g\) depends smoothly
on \(g\), in the sense that the function \(\tilde\Psi: G\times M\to\RR\)
given by \(\tilde\Psi(g,p)=\Psi_g(p)\) is smooth. Indeed, for any
\(X\in \fg\), evaluating \eqref{eq:Psi-def} on the fundamental vector
field \(\xi_X\) at a point \(p\in M\), we find
\begin{equation}
	\qty(\eL_{\xi_X}\Psi_g)(p) 
		= \eta(g_*\xi_X|_p)(gp) - \eta(\xi_X|_p)
		= \eta(\xi_{\Ad_g X})(gp) - \eta(p)
		=: \pair{F(g,p)}{X}
\end{equation}
where the last equality defines the smooth map \(F\in C^\infty(G\times M;\fg^*)\). 
It follows that \(\tilde\Psi\) is smooth up to the addition of a function 
\(f:G\to H^0(M) = \RR\), but this is exactly the freedom we have in the choice of
\(\Psi_g\) in equation~\eqref{eq:Psi-def}.

Taking \(\tilde\Psi\) to be smooth, the map \(\psi:\fg\to C^\infty(M)\),
\(X\mapsto\psi_X\) defined by \(\psi_X(p)=\dv{t}\Psi_{\exp(tX)}(p)|_{t=0}\)
satisfies
\begin{equation}\label{eq:psi-def}
	\eL_{\xi_X}\eta = d\psi_X,
\end{equation}
and so we can define a comoment map
\(\varphi:\fg\to C^\infty(M)\), \(X\mapsto\varphi_X\) by
\begin{equation}\label{eq:phi-def-psi}
	\varphi_x = \psi_X - \eta(\xi_X);
\end{equation}
indeed, we have
\begin{equation}
	d\varphi_X = d\psi_X  - d(\eta(\xi_X)) = \eL_{\xi_X}\eta - d\imath_{\xi_X}\eta = \imath_{\xi_X} d\eta = \imath_{\xi_X}\omega.
\end{equation}

\begin{lemma}\label{prop:gamma-const-cocycle}
	The map \(\gamma:G\times G\to C^\infty(M)\) given by
	\begin{equation}\label{eq:gamma-def}
		\gamma(g_1,g_2) := \Psi_{g_2} + g_2^*\Psi_{g_1} - \Psi_{g_1g_2}.
	\end{equation}
	takes values in the constant functions, and can thus be considered 
	as a (smooth) map \(\gamma:G\times G\to \RR\).
	Furthermore, \(\gamma\) satisfies the equation
	\begin{equation}\label{eq:gamma-cocycle}
		\gamma(g_2,g_3) + \gamma(g_1,g_2g_3) 
			= \gamma(g_1,g_2) + \gamma(g_1g_2,g_3)
	\end{equation}
	for all \(g_1,g_2,g_3\in G\).
\end{lemma}

\begin{proof}
For all \(g_1,g_2\), \((g_1g_2)^*\eta=g_2^* g_1^*\eta\), thus
\begin{equation}
\begin{split}
	d\gamma(g_1,g_2)
		= d\Psi_{g_2} + g_2^*d\Psi_{g_1} - d\Psi_{g_1g_2}
		= g_2^*\eta - \eta + g_2^*\qty(g_1^*\eta-\eta) - \qty(\qty(g_1g_2)^*\eta - \eta)
		= 0,
\end{split}
\end{equation}
so since \(M\) is connected, this shows that \(\gamma(g_1,g_2)\)
is constant. The second claim can be easily verified using the
definition of \(\gamma\).
\end{proof}

The last part of the above lemma says that \(\gamma\) is a cocycle in
\(C^2(G)=C^2(G;\RR)\). Recalling the defining equation \eqref{eq:Psi-def} for
\(\Psi\), we note that it is unique only up to addition of a map in
\(C^\infty(G)\); \(d\Psi'_g=d\Psi_g\) if and only if there exists some
\(f:G\to \RR\) such that \(\Psi'_g(p)=\Psi_g(p)+f(g)\) for all \(p\in M\),
\(g\in G\). The corresponding change in \(\gamma\) is
\begin{equation}
	\gamma'(g_1,g_2) = \gamma(g_1,g_2) + f(g_1) + f(g_2) - f(g_1g_2) = (g+\partial f)(g_1,g_2)
\end{equation}
where \(\partial\) denotes the differential of the group cochain complex,
so \([\gamma']=[\gamma]\in H^2(G)\). The action of \(G\) on \((M,d\eta)\)
thus determines an element of \(H^2(G)\), which itself corresponds to a
one-dimensional central extension
\begin{equation}
  \begin{tikzcd}
   1 \arrow[r] & \RR \arrow[r,] & \widehat{G} \arrow[r] &
   G \arrow[r] & 1.
  \end{tikzcd}
\end{equation}
See Appendix~\ref{sec:cent-ext-lie-group-coho} for more details. We can
describe this group extension explicitly, but let us first make a
convenient choice for \(\gamma\). We note that \(\Psi_e\) is constant by equation \eqref{eq:Psi-def} since \(M\) is connected, and
\(\gamma(g,e)=\gamma(e,g)=\Psi_e\) for all \(g\in G\) by
\eqref{eq:gamma-def}. Using the freedom to redefine \(\Psi\), we can 
assume without loss of generality that \(\Psi_e=0\), whence
\(\gamma(g,e)=\gamma(e,g)=0\). We then construct the group \(\widehat{G}\)
as follows: as a manifold, \(\widehat{G}=G\times \RR\), and the group
multiplication is given by
\begin{equation}
	(g_1,u_1)\cdot(g_2,u_2) = (g_1g_2,u_1+u_2+\gamma(u_1,u_2))
\end{equation}
with identity \(\widehat{e}=(e,0)\) and inverses
\((g,u)^{-1}=(g^{-1},-u-\gamma(g,g^{-1}))\). The morphisms in the short
exact sequence above are given by the natural inclusion and projection.

\begin{lemma}\label{lemma:theta-def-gamma}
	For all \(g\in G\) and \(X,Y\in\g\),
	\begin{align}
		\pair{\theta(g)}{X} 
		&= \dv{t}\qty(\gamma\qty(g,\exp(t\Ad_{g^{-1}}X))-\gamma(\exp(tX),g))\eval_{t=0},
			\label{eq:theta-gamma}\\
		c(X,Y) &= \pdv{s}\pdv{t}\qty(\gamma(\exp(sX),\exp(tY))
					-\gamma(\exp(sY),\exp(tX)))\eval_{s=0,t=0},\label{eq:c-gamma}
	\end{align}
	where \(\theta\) and \(c\) are the cocycles associated
	to the comoment map \(\varphi\) defined by
	equation~\eqref{eq:phi-def-psi}.
\end{lemma}

\begin{proof}
	For the first formula, we can use the definition of \(\gamma\),
	\eqref{eq:gamma-def}, to write
	\begin{align}
		\dv{t}\gamma(\exp(tX),g)\eval_{t=0} 
		&= g^*\psi_X 
			- \dv{t}\Psi_{\exp(tX)g}\eval_{t=0},\\
		\dv{t}\gamma(g,\exp(tY))\eval_{t=0} 
		&= \lambda(g)\psi_Y + \eL_{\xi_Y}\Psi_g 
			- \dv{t}\Psi_{g\exp(tY)}\eval_{t=0}.
	\end{align}
	We now note that \(\exp(tX)g=g\exp(t\Ad_{g^{-1}}X)\), so setting
	\(Y=\Ad_{g^{-1}}X\), taking the difference between these equations
	and using the definition of \(\theta\) gives
	\eqref{eq:theta-gamma}. The second formula follows from the first
	by differentiating and using the fact that
	\(\gamma(e,g)=0\).
\end{proof}

\begin{proposition}
	The central extension of \(G\) defined by
	\(\gamma\) integrates the central extension of \(\fg\) defined
	by \(c\), and thus suffices for the central extension \(\widehat{G}\)
	appearing in Theorem~\ref{thm:homo-sym-ext}, where we take
	\(K=\RR\).
\end{proposition}

\begin{proof}
	The adjoint action of \(\widehat{G}\) on \(\widehat\fg\) is given by
	\begin{equation}
	\begin{split}
		\widehat\Ad_{(g,u)}(Y,v) &= \dv{t}(g,u)\exp(t(Y,v))(g,u)^{-1} \eval_{t=0}\\
		&= \dv{t} (g,u)(\exp(tY),tv)\qty(g^{-1},-u-\gamma(g,g^{-1}))\eval_{t=0}\\
		&= \dv{t} \qty(g\exp(tY)g^{-1}, tv+\gamma(\exp(tY),g^{-1})-\gamma\qty(g,g^{-1})+\gamma\qty(g,\exp(tY)g^{-1}))\eval_{t=0},
	\end{split}
	\end{equation}
	and using the identity \(\exp(tY)g^{-1}=g^{-1}\exp(t\Ad_gY)\) and the
	cocycle condition for \(\gamma\), we find that
	\begin{equation}
		-\gamma\qty(g,g^{-1})+\gamma\qty(g,\exp(tY)g^{-1}) = \gamma(gg^{-1},\exp(t\Ad_gY)) - (g^{-1},\exp(t\Ad_gY))
	\end{equation}
	and so, using equation~\eqref{eq:theta-gamma},
	\begin{equation}\label{eq:hat-adjoint-theta-2}
		\widehat\Ad_{(g,u)}(Y,v) = (\Ad_gY,v -\theta(g^{-1})).
	\end{equation}
	Differentiating once again with respect to the group element, we find
	\begin{equation}
		\comm{(X,u)}{(Y,v)} = \widehat\ad_{(X,u)}(Y,v) = (\comm{X}{Y},c(X,Y)),
	\end{equation}
	so \(\widehat\fg\) is the Lie algebra of \(\fg\). We thus see that the
	short exact sequence expressing \(\widehat{G}\) as a central extension
	of \(G\) by \(\RR\) integrates the one expressing \(\widehat\fg\) as a
	central extension of \(\fg\).
\end{proof}

We note that equation~\eqref{eq:hat-adjoint-theta-2}
closely resembles \eqref{eq:hat-adjoint-theta}; in fact,
since \(\widehat\Ad_g=\widehat\Ad_{(g,u)}\) where the
former is the ``factored" coadjoint representation, we
they are in fact the same equation. We thus see that the
symplectic cocycle \(\theta\) arising from the moment map
is manifestly the one arising from the coadjoint action of
\(\widehat{G}\) (equation~\ref{eq:hat-adjoint-theta}); we
saw in Section~\ref{sec:interpretation-G-action} that the
same held in the general case (where \(\omega\) was not exact),
but there we needed to use a uniqueness result 
(Proposition~\ref{prop:symp-cocycles-unique}).

Finally, we can explicitly demonstrate the observation in
the remark following Theorem~\ref{thm:neeb} that the central
extension \(\widehat{G}\) integrating the algebra extension is
not unique, and that we have a choice of the kernel group.
For any integer \(n\), let \(\pi_n:\RR\to\RR/n\ZZ\cong S^1\)
be the canonical Lie group morphism and let
\((\pi_n)_*: C^\bullet(G)\to C^\bullet(G;S^1)\) be the
map \(\phi\to\pi_n\circ\phi\). One can easily check
that this commutes with the differential, inducing
maps \((\pi_n)_*: H^\bullet(G)\to H^\bullet(G;S^1)\).
In particular, \(\gamma_n:=\pi_n\circ\gamma\) is an
element in \(Z^2(G;S^1)\) and thus defines a
(geometrically trivial) central extension \(\widehat{G}_n\)
of \(G\) by \(S^1\). Now, if \(\widehat{G}\) is the
extension by \(\RR\) defined by \(\gamma\), letting
\(\Pi_n:\widehat{G}\to\widehat{G}_n\) be the map
\(\Pi_n(g,u)=(g,[u])\), we get the commutative diagram
with exact rows and covering maps for vertical arrows
\begin{equation}
\begin{tikzcd}
	& 1 \ar[r] & \RR \ar[r]\ar[d,"\pi_n"] & \widehat{G} \ar[r]\ar[d,"\Pi_n"] & G \ar[r]\ar[equal]{d} & 1\\
	& 1 \ar[r] & S^1 \ar[r] & \widehat{G}_n \ar[r] & G \ar[r] & 1.
\end{tikzcd}
\end{equation}
In particular, the lower row is also a central extension
of \(G\) integrating the Lie algebra extension. 

\appendix
\section{Lie group and Lie algebra cohomology}
\label{sec:cohomology}

Here we give a brief overview of the cohomology theories used in this paper.

\subsection{Group cohomology}
\label{sec:group-cohomology}
	 
Let \(G\) be a Lie group, \(\fg=\Lie G\) its Lie algebra and \(\rho:G\to\GL(V)\) a
representation of \(G\). The \emph{cochain complex for \(G\) with values in \((V,\rho)\)}
(or simply with values in \(V\) where there is no ambiguity) is the complex
\((C^\bullet(G;V),\partial)\) with cochain spaces
\begin{equation}
  C^p(G;V) = C^\infty(G^p,V),
\end{equation}
where \(G^p=\underbrace{G\times G\times \dots \times G}_{p\text{factors}}\), and 
differential \(\partial:C^p(G;V)\to C^{p+1}(G;V)\) given by
\begin{equation}\label{eq:grp-cpx-differential}
\begin{split}
(\partial\psi)(g_1,g_2,\dots,g_{p+1})
	= \rho({g_1})\psi(g_2,g_3,\dots,g_{p+1})
	&+\sum_{i=1}^p (-1)^i \psi(g_1,g_2,\dots,g_ig_{i+1},\dots,g_{p+1}) \\
	&\qquad + (-1)^{p+1}\psi(g_1,g_2,\dots,g_p).
\end{split}
\end{equation}
The cohomology \(H^\bullet(G;V)\) of this complex is known as the
\emph{cohomology of \(G\) with values in \(V\)}. In this paper, the group
cohomology theories we are concerned with take values either in the coadjoint
representation \((\fg^*,\Ad^*)\) or trivial representation \((\RR,1)\); for
the latter, we write \(C^\bullet(\fg):=C^\bullet(\fg;\RR)\) and
\(H^\bullet(\fg):=H^\bullet(\fg;\RR)\).

Now let \(A\) be a connected abelian Lie group whose group operation is 
denoted \(+\). We define \emph{the  cochain complex for \(G\) with values
in \(A\)} to be the complex \((C^\bullet(G;A),\partial)\), with cochain 
spaces
\begin{equation}
  C^p(G;A) = C^\infty(G^p,A)
\end{equation}
and differential \(\partial:C^p(G;A)\to C^{p+1}(G;A)\) given by the same
formula \eqref{eq:grp-cpx-differential}, except that the first term is
simply \(\psi(g_2,g_3,\dots,g_{p+1})\) (without the \(\rho({g_1})\) action).
Note that we do not require an action of \(G\) on \(A\). The cohomology
\(H^\bullet(G;A)\) of this complex is \emph{the cohomology of \(G\) with
values in \(A\)}. 

\subsection{Central extensions of Lie groups from group cohomology}
\label{sec:cent-ext-lie-group-coho}

Let \(G\) be a connected Lie group and \(A\) a connected abelian Lie group.
Given a cocycle \(\gamma\in C^2(G;A)\), one can define a Lie group structure 
on the product manifold \(G\times A\) by
\begin{equation}\label{eq:triv-grp-cent-ext-mult}
	(g_1,a_1)\cdot(g_2,a_2) = (g_1g_2,a_1+a_2+\gamma(g_1,g_2)).
\end{equation}
The cocycle condition for \(\gamma\) is equivalent to associativity
of this product, the identity element is
\begin{equation}
	\widehat{e} = (e,a_0)
\end{equation}
where \(a_0=-\gamma(g,e)=-\gamma(e,g)=-\gamma(e,e)\) -- the
latter equalities follow from the cocycle condition for \(\gamma\),
and inverses are given by
\begin{equation}\label{eq:inverse-formula}
	(g,a)^{-1} = (g^{-1},a_0-a-\gamma(g,g^{-1})).
\end{equation}
We denote \(G\times A\) equipped with this group structure by
\(\widehat{G}\). The map \(\iota: A\to G\times A\) given by
\(\iota(a)=(e,a_0+a)\) and the natural projection \(\pi: G\times A\to G\)
are Lie group homomorphisms, and they give us a short exact sequence
\begin{equation}
  \begin{tikzcd}
   1 \arrow[r] & A \arrow[r,"\iota"] & \widehat{G} \arrow[r,"\pi"] &
   G \arrow[r] & 1.
  \end{tikzcd}
\end{equation}
where the image of \(\iota\) is central; that is, \(\widehat{G}\) is a
central extension of \(G\) by \(A\). If a different cocycle
\(\gamma'\in C^2(G;A)\) is chosen, the central extensions they define are
equivalent if and only if \(\gamma'-\gamma\) is a coboundary. Thus we have
an injective map from \(H^2(G;A)\) to the set of equivalence classes of
central extensions of \(G\) by \(A\). Let us describe the image of this map. 

Any central extension of \(G\) by \(A\) is a fibre bundle (in
particular a principal \(A\)-bundle) over \(G\) with fibre \(A\), but
the central extension associated to an element in \(H^2(G;A)\) is
manifestly trivial (\(\widehat{G}\cong G\times A\)) as a fibre bundle.
We will call such an extension \emph{geometrically trivial}. Conversely,
given any geometrically trivial central extension, one can show that, in
any trivialisation, the group structure takes the form
\eqref{eq:triv-grp-cent-ext-mult} for some cocycle \(\gamma\in C^2(G;A)\).
If a different trivialisation is chosen, one obtains a different cocycle
\(\gamma'\) but \([\gamma']=[\gamma]\) in \(H^2(G;A)\), allowing us to
assign an element of \(H^2(G;A)\) to any geometrically trivial central
extension. This shows that the map from \(H^2(G;A)\) to the set of
geometrically trivial central extensions is a bijection.

\subsection{Chevalley--Eilenberg cohomology}
\label{sec:chevalley-eilenberg}

Let \(\rho: \fg \to \mathfrak{gl}(V)\) be a representation of the Lie
algebra \(\fg\) on the real vector space \(V\). The
\emph{(Chevalley--Eilenberg) cochain complex for \(\fg\) with values
  in \((V,\rho)\)} is the complex
\((C^\bullet(\fg;V),\partial_{\mathrm{CE}})\) with cochain spaces
\begin{equation}
\begin{split}
	C^p(\fg;V) = \Hom(\Wedge^p \fg,V)
\end{split}
\end{equation}
and differential \(\partial_{\mathrm{CE}}:C^p(\fg;V)\to C^{p+1}(\fg;V)\) given by
\begin{equation}
\begin{split}
  (\partial_{\mathrm{CE}}\psi)(X_1,X_2,\dots,X_{p+1})
  = &\sum_{i=1}^{p+1}(-1)^{i-1}\rho(X_i)\psi(X_1,X_2,\dots,\widehat{X_i},\dots,X_{p+1})\\ 
  &+ \sum_{i<j}(-1)^{i+j}\psi(\comm{X_i}{X_j},X_1,\dots,\widehat{X_i},\dots,\widehat{X_j},\dots,X_{p+1}),
\end{split}
\end{equation}
where the $\widehat{\phantom{X_i}}$ adorning a symbol denotes its
omission.  The cohomology \(H^\bullet(\fg;V)\) of this complex is the
\emph {(Chevalley--Eilenberg) cohomology of \(\fg\) with values in
  \((V,\rho)\)} (or simply with values in \(V\)).

We will be particularly interested in 2-cocycles of \(\fg\) with values in the
trivial representation. We write \(C^\bullet(\fg):=C^\bullet(\fg;\RR)\) and
\(H^\bullet(\fg):=H^\bullet(\fg;\RR)\) for cochains and cohomology in the trivial
representation.

\section{Structures on Lie groups}
\label{sec:lie-group-structures}

\subsection{Invariant vector fields}
\label{sec:inv-vfs}

Let \(G\) be a Lie group and \(\fg\) its Lie algebra, thought of as
the tangent space $T_eG$ at the identity. For each \(X\in\fg\), we let
\(\lambda_X,\rho_X\in\eX(G)\) be the left- and right-invariant vector
fields associated to \(X\) respectively:
\begin{align}
	&(\lambda_X)_g = (L_g)_*X= \dv{t} g\exp(tX)\eval_{t=0},
	&& (\rho_X)_g = (R_g)_*X =\dv{t} \exp(tX)g\eval_{t=0}
\end{align}
where \(L_g,R_g\) are left- and right- translation respectively. The maps
\(\lambda,\rho:\fg\to \eX(G)\) given by \(X\mapsto\lambda_X\) and 
\(X\mapsto\rho_X\) are injective, and
\begin{align}
	&\lambda_{\comm{X}{Y}} = \comm{\lambda_X}{\lambda_Y}
	&&\rho_{\comm{X}{Y}} = -\comm{\rho_X}{\rho_Y}.
\end{align}
for all \(X,Y\in\fg\); that is, \(\lambda\) is a Lie algebra
homomorphism and \(\rho\) is an anti-homomorphism. Furthermore,
composing these maps with evaluation at a point \(g\in G\), we find
that \(X\mapsto(\lambda_X)_g\) and \(X\mapsto(\rho_X)_g\) are
isomorphisms \(\fg\to T_gG\); in particular, the values of
left-invariant vector fields span every tangent space, as do those of
the right-invariant vector fields, and the sets of left- and right-
invariant vector fields both span \(\eX(G)\) as a
\(C^\infty(G)\)-module.

\begin{lemma}\label{lemma:livf-rivf-rels}
  For each \(X\in \fg\) and \(g\in G\), we have
  \begin{align}
    &(\lambda_X)_g = (\rho_{{\Ad_g} X})_g,
    &&(\rho_X)_g = (\lambda_{\Ad_{g^{-1}} X})_g.
  \end{align}
\end{lemma}

\begin{proof}
  We have
  \begin{equation}
    (\lambda_X)_g = (L_g)_*X = (R_g)_*(R_{g^{-1}})_*(L_g)_*X = (R_g)_*\Ad_g X = \rho_{\Ad_g X}
  \end{equation}
  and similarly for the other claim.
\end{proof}

\subsection{Left-invariant de Rham complex}
\label{sec:left-inv-forms}

For any differential form \(\omega\in\Omega^p(G)\), we have
\(L_g^*(d\omega)=d(L_g^*\omega)\); thus the De Rham complex
\((\Omega^\bullet(G),d)\) of \(G\) has a subcomplex
\((\Omega^\bullet_{\mathrm{LI}}(G),d)\) consisting of left-invariant
differential forms. Left-invariant forms are determined by their value
at the identity -- in particular \(\omega_g=L^*_{g^{-1}}\omega_e\) --
and these values lie in the Chevalley--Eilenberg cochain space
\(C^p(\fg)=\Wedge^p\fg^*\); furthermore one can show that
\(d_g\omega=L^*_{g^{-1}}\partial_{\mathrm{CE}}\omega_e\). It follows
from these observations that
\((\Omega^\bullet_{\mathrm{LI}}(G),d)\cong
(C^\bullet(\fg),\partial_{\mathrm{CE}})\). This observation will be
used in particular in Section~\ref{sec:action-to-extension}. In the
case that \(G\) is compact, the natural inclusion of the
left-invariant de Rham complex into the full de Rham complex induces
an isomorphism in cohomology
\(H^\bullet_{\mathrm{LI}}(G)\cong H_{\mathrm{dR}}^\bullet(G)\), so we have
\(H_{\mathrm{dR}}^\bullet(G)\cong H^\bullet(\fg)\).

\providecommand{\href}[2]{#2}\begingroup\raggedright\endgroup

\end{document}